\newcommand{\R}{\mathbb{R}}
\newcommand{\di}{\,d}
\newcommand{\SB}{\mathbb{S}}
\newcommand{\M}{\mathcal{M}}
\newcommand{\PP}{\mathcal{P}} \newcommand{\LL}{\mathcal{L}}
\renewcommand{\H}{\mathcal{H}} 
 \renewcommand{\Cap}{\gamma}
 \newcommand{\tn}{\textnormal}
\newcommand{\po}{\partial} 
\newcommand{\ve}{\varepsilon} 
 \newcommand{\spt}{{\text{\sl spt\,}}}
\newcommand{\DM}{\mathcal D\mathcal M} 
 \renewcommand{\div}{\textnormal{div }}
 \newcommand{\F}{\mathcal{ F}}
\theoremstyle{plain}
\newtheorem{theorem}{Theorem}[page]
\newtheorem{lemma}[theorem]{Lemma}
\numberwithin{equation}{page}
\newtheorem{definition}[theorem]{Definition}
\newtheorem{proposition}[theorem]{Proposition}
\newtheorem{example}[theorem]{Example}
\newtheorem{corollary}[theorem]{Corollary}
\newtheorem{cor}[theorem]{Corollary}
\newtheorem{remark}[theorem]{Remark}
\newcommand{\charfn}[1]{\raisebox{1.2pt}{$\chi$}
           \hspace{-1pt}\raisebox{-3pt}{{$\scriptscriptstyle#1$}}}
\newcommand{\abs}[1]{\left\vert{#1}\right\vert}
\newcommand{\rn}{\mathbb{R}^{N}}
\newcommand{\ri}{\mathbb{R}}
\newcommand{\rii}{\mathbb{R}^{2}}
\newcommand{\norm}[1]{\left\Vert#1\right\Vert}
\renewcommand{\div}{\text{\sl div\,}}
\newcommand{\dist}{{\boldsymbol d}}
\newcommand{\FF}{{\boldsymbol F}}
\newcommand{\nnu}{{\boldsymbol \nu}}
\newcommand{\pp}{{\boldsymbol p}}
\def\medcup{\mathop{\textstyle\bigcup}\limits}
\def\medcap{\mathop{\textstyle\bigcap}\limits}
\def\rightangle{\vcenter{\hsize5.5pt
    \hbox to5.5pt{\vrule height7pt\hfill}
    \hrule}}
\def\rtangle{\mathrel{\rightangle}}
\def\intave#1{\int_{#1}\hbox{\llap{$\raise2.3pt\hbox{\vrule
height.9pt width7pt}\phantom{\scriptstyle{#1}}\mkern-2mu$}}}
\def\intav#1{\mathchoice
          {\mathop{\vrule width 9pt height 3 pt depth -2.6pt
                  \kern -9pt \intop}\nolimits_{\kern -6pt#1}}%
          {\mathop{\vrule width 5pt height 3 pt depth -2.6pt
                  \kern -6pt \intop}\nolimits_{#1}}%
          {\mathop{\vrule width 5pt height 3 pt depth -2.6pt
                  \kern -6pt \intop}\nolimits_{#1}}%
          {\mathop{\vrule width 5pt height 3 pt depth -2.6pt
                  \kern -6pt \intop}\nolimits_{#1}}}
\def\intav#1{\vint_{#1}}
\begin{document}
\title[Gauss-Green theorem, sets of finite perimeter, and balance
laws]{Gauss-Green Theorem for Weakly Differentiable Vector Fields,
Sets of Finite Perimeter, and Balance Laws}
%
\author{Gui-Qiang Chen \qquad Monica Torres  \qquad William P. Ziemer}
\address[G.-Q. Chen]{
 Department of Mathematics, Northwestern University,
 2033 Sheridan Road, Evanston, IL 60208-2730, USA.
 http://www.math.northwestern.edu/\~{}gqchen}
\email{gqchen@math.northwestern.edu}
\address[M. Torres]{Department of Mathematics\\
 Purdue University,
 150 N. University Street, West Layayette, IN 47907-2067, USA.
 http://www.math.purdue.edu/\~{}torres}
\email{torres@math.purdue.edu}
\address[W. Ziemer]{Department of Mathematics\\
 Indiana University,
 Rawles Hall, Bloomington, IN 47405, USA.
 http://www.indiand.edu/\~{}ziemer}
 \email{ziemer@indiana.edu}

\keywords{Gauss-Green theorem, weakly differential vector fields,
 divergence-measure fields, sets of finite perimeter, normal traces,
 balance law, oriented surfaces, Cauchy fluxes, axioms for continuum
thermodynamics, field equations, entropy solutions, conservation
laws}
\subjclass{Primary: 28C05, 26B20, 28A05, 26B12, 35L65, 35L50;
Secondary: 28A75, 28A25, 26B05, 26B30, 26B40}
\thanks{Submitted on September 3, 2007}

\begin{abstract}
We analyze a class of weakly differentiable vector fields \(\FF
\colon \rn \to \rn\) with the property that \(\FF\in L^{\infty}\)
and \(\div \FF\) is a Radon measure.  These fields are called
bounded divergence-measure fields. The primary focus of our
investigation is to introduce a suitable notion of the normal trace
of any divergence-measure field $\FF$ over the boundary of an
arbitrary set of finite perimeter, which ensures the validity of the
Gauss-Green theorem. To achieve this, we first develop an
alternative way to establish the Gauss-Green theorem for any smooth
bounded set with \(\FF\in L^{\infty }\).  Then we establish a
fundamental approximation theorem which states that, given a Radon
measure $\mu$ that is absolutely continuous with respect to
$\mathcal{H}^{N-1}$ on $\rn$, any set of finite perimeter can be
approximated by a family of sets with smooth boundary essentially
from the measure-theoretic interior of the set with respect to the
measure $\|\mu\|$.
We employ this approximation theorem to derive the normal trace of
$\FF$ on the boundary of any set of finite perimeter, \(E\), as the
limit of the normal traces of $\FF$ on the boundaries of the
approximate sets with smooth boundary, so that the Gauss-Green
theorem for $\FF$ holds on \(E\). With these results, we analyze the
Cauchy fluxes that are bounded by a Radon measure over any oriented
surface (i.e. an $(N-1)$-dimensional surface that is a part of the
boundary of a set of finite perimeter) and thereby develop a general
mathematical formulation of the physical principle of balance law
through the Cauchy flux.  Finally, we apply this framework to the
derivation of systems of balance laws with measure-valued source
terms from the formulation of balance law. This framework also
allows the recovery of Cauchy entropy fluxes through the Lax entropy
inequality for entropy solutions of hyperbolic conservation laws.
\end{abstract}

\maketitle

\section{Introduction}

\medskip
In this paper we analyze a class of weakly differentiable vector
fields \(\FF\colon \rn \to \rn \) with the property that \(\FF\in
L^{\infty }\) and \(\div \FF\) is a Radon measure $\mu$ with finite
total variation (i.e. a totally finite signed measure).
These fields are called {\it bounded divergence-measure fields}, and
the class is denoted by \(\mathcal{DM}^{\infty}\).  The primary
focus of our investigation is to introduce a suitable notion of the
normal trace of any divergence-measure field over the boundary of an
arbitrary set of finite perimeter to obtain a general version of the
Gauss-Green theorem.  Clearly, this investigation is closely related
to the theory of $BV$ functions in \(\rn \); in fact, it would be
completely subsumed by the $BV$ theory if the fields were of the
form \(\FF=(F_{1},F_{2}, \dots, F_{N}\)) with each \(F_{k}\in BV(\rn
) \), since then $\div \FF=\sum_{k=1}^{N}\frac{\partial
F_{k}}{\partial x_{k}}$ is a Radon measure $\mu$ with finite total
variation (cf. \cite{Ziemer2}).
However, in general,
the condition \(\div \FF=\mu \) allows for cancellation, which thus
makes the problem more difficult and accordingly more important for
applications (see \S \ref{sec:cauchy}--\S\ref{sec:conservation}).
For the Gauss-Green theorem in the $BV$ setting, we refer to
Burago-Maz'ja \cite{BM2}, Volpert \cite{Volpert}, and the references
therein. The Gauss-Green theorem for Lipschitz vector fields over
sets of finite perimeter was first obtained by DeGiorgi
\cite{Georgi1, Georgi2} and Federer \cite{Federer1,Federer2}. Also
see Evans-Gariepy \cite{E1}, Lin-Wang \cite{LinYang}, and Simon
\cite{Simonbook}.

Some earlier efforts were made on generalizing the Gauss-Green
 theorem for some special situations of divergence-measure fields,
and relevant results can be found in Anzellotti \cite{Anz} for an
abstract formulation when $\FF\in L^\infty$ over a set with $C^1$
boundary and Ziemer \cite{Ziemer1} for a related problem for
$\div\,\FF\in L^\infty$; also see
\cite{afp,Ba1,BF1,jurkat1,jurkat2,jurkat5,Nonnenmacher,pf1,pf2,pf3,Rodrigues}.
In Chen-Frid \cite{CF1,CF-CMP}, an explicit way to formulate the
suitable normal traces over Lipschitz deformable surfaces was first
observed for $\FF\in \DM^\infty$. In particular, it was proved in
\cite{CF1,CF-CMP} that the normal trace over a Lipschitz deformable
surface, oriented by the unit normal vector $\nu$, is determined
completely by the neighborhood information from the positive side of
the surface pointed by $\nu$ and is independent of the information
from the other side. This is the primary motivation for our further
investigation on divergence-measure fields. Chen-Torres
\cite{ChenTorres} were the first to obtain the normal trace for any
bounded divergence-measure field over a set of finite perimeter,
$E$, and the corresponding Gauss-Green theorem. One of the main
results in this paper is to obtain this normal trace as the limit of
the normal traces over the smooth boundaries that approximate the
reduced boundary $\partial^{*}E $ of $E$.  In particular, the normal
trace is determined completely by the neighborhood information
essentially from the measure-theoretic interior of the set (see
Theorem \ref{main}), so that the Gauss-Green theorem holds for any
set of finite perimeter.

We recall a very general approach, initiated by Fuglede
\cite{fuglede}, in which the following result was established: If
$\FF\in L^{p}(\R^{N};\R^{N})$, $1\le p\le\infty$, is a vector field
with ${\div}  \FF=\mu $, where $\mu $ is a signed Radon measure,
then
\begin{equation}\label{one}
  \int_{E}\div\FF:=\mu (E)=-\int_{\partial ^{*}E}\FF(y)\cdot\nu
(y)\,d\H^{N-1}(y)
\end{equation}
for ``almost all'' sets of finite perimeter, $E$, where
$\mathcal{H}^{N-1}$ is the $(N-1)$-dimensional Hausdorff measure.
The term ``almost all'' is expressed in terms of a condition that
resembles ``extremal length'', a concept used in complex analysis
and potential theory (cf. \cite{zcc,zel,zexl,He}). One way of
summarizing our work in this paper is to say that we wish to extend
Fuglede's result so that \eqref{one} holds for {\it every} set \(E\)
of finite perimeter. Of course, this requires a suitable notion of
the normal trace of \(\FF\) on \(\partial^{*}E \). This is really
the crux of the problem as \(\FF\), being only measurable, cannot be
re-defined on an arbitrary set of dimension \(N-1\). To achieve our
goal, we first establish a fundamental approximation theorem which
states that, given a Radon measure $\mu$ on $\rn$ such that
$\mu<<\mathcal{H}^{N-1}$, any set of finite perimeter can be
approximated by a family of sets with smooth boundary essentially
from the measure-theoretic interior of the set with respect to the
measure $\|\mu\|$ (e.g. $\mu=\div \FF$).
Then we employ this approximation theorem to derive the normal trace
of $\FF$ on the boundary of any set of finite perimeter as the limit
of the normal traces of $\FF$ on the smooth boundaries of the
approximate sets and establish the Gauss-Green theorem for $\FF$
which holds for an arbitrary set of finite perimeter.

With these results on divergence-measure fields and sets of finite
perimeter, we analyze the Cauchy flux that is bounded by a
nonnegative Radon measure $\sigma$ over an oriented surface (i.e. an
$(N-1)$-dimensional surface that is a part of the boundary of a set
of finite perimeter) and develop a general mathematical formulation
of the physical principle of balance law through the Cauchy flux. In
the classical setting of the physical principle of balance law,
Cauchy \cite{Cauchy1, Cauchy2} first discovered that the flux
density is necessarily a linear function of the interior normal
(equivalently, the exterior normal) under the assumption that the
flux density through a surface depends on the surface solely through
the normal at that point. It was shown in Noll \cite{Noll} that
Cauchy's assumption follows from the balance law. Ziemer
\cite{Ziemer1} provided a first formulation of the balance law for
the flux function $\FF\in L^\infty$ with $\div \FF\in L^\infty$ at
the level of generality with sets of finite perimeter. Also see
Dafermos \cite{Da} and Gurtin-Martins \cite{gw1,gw2}. One of the new
features in our formulation is to allow the presence of exceptional
surfaces, ``shock waves'', across which the Cauchy flux has a jump.
When the Radon measure $\sigma$ reduces to the $N$-dimensional
Lebesgue measure $\mathcal{L}^N$, the formulation reduces to
Ziemer's formulation in \cite{Ziemer1}, which shows its consistency
with the classical setting. We first show that, for a Cauchy flux
$\mathcal{F}$ bounded by a measure $\sigma$, there exists a bounded
divergence-measure field $\FF: \R^N\to \R^N$, defined
$\mathcal{L}^N$-a.e., such that
$$
\mathcal{F}(S)=-\int_S \FF(y)\cdot \nu(y) \,\,d\mathcal{H}^{N-1}(y)
$$
for almost any oriented surface $S$, oriented by the interior unit
normal $\nu$. Then we employ our results on divergence-measure
fields to recover the values of the Cauchy flux on the exceptional
surfaces directly via the vector field $\FF$. The value as the normal
trace of $\FF$ on the exceptional surface is the unique limit of the
normal traces of $\FF$ on the non-exceptional surfaces essentially
from the positive side of the exceptional surface pointed by $\nu$.
Finally, we apply this general framework to the derivation of
systems of balance laws with measure-valued source terms from the
mathematical formulation of balance law. We also apply the framework
to the recovery of Cauchy entropy fluxes through the Lax entropy
inequality for entropy solutions of hyperbolic conservation laws by
capturing entropy dissipation.

\medskip
We observe the recent important work by Bourgain-Brezis \cite{BB}
and De Pauw-Pfeffer \cite{pf2} for the following problem with
different view of point: Find a continuous vector field to the
divergence-measure equation:
\begin{equation}\label{DPP}
\div \FF =\mu \qquad \text{in} \,\, \Omega,
\end{equation}
for a given Radon measure $\mu$. In the case $d\mu=f \, dx$ where
$f\in L^n_{loc}(\Omega)$, the existence of a solution $\FF$ to
\eqref{DPP} follows form the closed-range or Hahn-Banach theorem as
shown in \cite{BB}. It is proved in \cite{pf2} that equation
\eqref{DPP} has a continuous weak solution if and only if $\mu$ is a
strong charge, i.e., given $\varepsilon>0$ and a compact set
$K\subset \Omega$, there is $\theta>0$ such that
$$
\int_\Omega \phi\, d\mu \le \varepsilon \|\nabla\phi\|_{L^1}
+\theta\|\phi\|_{L^1}
$$
for any smooth function $\phi$ compactly supported on $K$.

\bigskip
The organization of this paper is as follows. In \S 2, we first
recall some properties of Radon measures, sets of finite perimeter,
and related $BV$ functions, and then we introduce the notion of
 an oriented surface and develop some basic properties of
divergence-measure fields. In \S 3, we develop an alternative way to
obtain the Gauss-Green formula for a bounded divergence-measure
field over any smooth boundary by a technique, which motivates our
further development for the general case. In \S 4, we establish a
fundamental approximation theorem which states that, given a Radon
measure $\mu$ on $\R^N$ such that $\mu << \mathcal{H}^{N-1}$, any
set of finite perimeter can be approximated by a sequence of sets
with smooth boundary essentially from the interior of the set with
respect to the measure $\|\mu\|$. In \S 5, we introduce the normal
trace of a divergence-measure field $\FF$ on the boundary $\partial
E$ of any set of finite perimeter as the limit of the normal traces
of $\FF$ on the smooth surfaces that approximate $\partial E$
essentially from the measure-theoretic interior of $E$ with respect
to the measure $\|\div \FF\|$, constructed in \S 4, and then we
establish the corresponding Gauss-Green theorem. In \S6--\S7, we
further analyze properties of divergence-measure fields, especially
showing the representation of the divergence-measures of jump sets
via the normal traces and the consistency of our normal traces with
the classical traces (i.e. values) when the vector field is
continuous. In \S 8, we first show that, if the set of finite
perimeter, $E$, satisfies \eqref{8.1} (which is similar to Lewis's
``uniformly fat'' condition in potential theory \cite{Lewis}), there
exists a one-sided approximation to $E$, and we then show that an
open set of finite perimeter is an extension domain for any bounded
divergence-measure field. In \S 9, we first introduce a class of
Cauchy fluxes that allow the presence of these exceptional surfaces
or ``shock waves'', and we then prove that such a Cauchy flux
induces a bounded divergence-measure (vector) field $\FF$ so that
the Cauchy flux over {\it every} oriented surface with finite
perimeter can be recovered through $\FF$ via the normal trace over
the oriented surface.

In \S 10, we apply the results established in \S3--\S 9 to the
mathematical formulation of the physical principle of balance law
and the rigorous derivation of systems of balance laws with
measure-valued source terms from that formulation. Finally, in \S
11, we apply our results to the recovery of Cauchy entropy flux
through the Lax entropy inequality for entropy solutions of
hyperbolic conservation laws by capturing entropy dissipation.

\section{Radon Measures, Sets of finite perimeter, and
Divergence-Measure Fields}

In this section we first recall some properties of Radon measures,
sets of finite perimeter, and related $BV$ functions (also cf.
\cite{afp,E1,Fe,G1,Ziemer2}). We then introduce the notion of oriented
surfaces and develop some basic properties of divergence-measure
fields.  For the sake of completeness, we start with some basic
notions and definitions. First, denote by $\mathcal{H}^{M}$ the
$M$-dimensional Hausdorff measure in $\R^{N}$ for $M\le N$, and by
$\mathcal{L}^{N}$ the Lebesgue measure in $\R^{N}$ (recall that
$\mathcal{L}^{N}=\mathcal{H}^{N}$). For any
$\mathcal{L}^{N}$-measurable set $E \subset \R^{N}$, we denote $|E|$
as the $\mathcal{L}^{N}$-Lebesgue measure of the set $E$ and $\partial
E$ as its topological boundary. Also, we denote $B(x,r)$ as the closed
ball of radius $r$ and center at $x$. The symmetric difference of sets
is denoted by
$$
A \Delta B:=(A\setminus B)\cup (B\setminus A).
$$
Let $\Omega\subset\rn$ be open. We denote by \(E \Subset\Omega \)
that the closure of  \(E\) is compact and contained in \(\Omega \).
Let \(C_{c}(\Omega)\) be the space of compactly supported continuous
functions on \(\Omega\) with \(\norm{ \varphi }_{0;\Omega }:=
\sup\{|\varphi (y)|\,:\, y\in \Omega\}.\) A Radon measure \(\mu \)
in $\Omega$ is a regular Borel measure whose total variation on each
compact set \(K \Subset \Omega\) is finite, i.e. \(\|\mu\|
(K)<\infty \). The space of Radon measures supported on an open set
\(\Omega \) is denoted by $\mathcal{M}(\Omega)$. Any Radon measure
$\mu$ can be decomposed into the difference of two positive Radon
measures $ \mu =\mu^+-\mu^-$; the total variation of $\mu$ is
$\norm{\mu}=\mu^++\mu^-$. Equivalently, if \(\mu\) is a signed Radon
measure on \(\Omega\), the total variation of \(\mu\) on any bounded
open set $B\subset\Omega$ is equal to
\begin{equation}\label{totalvariation}
\norm{ \mu }(B) = \sup\Big\{\int_{\Omega}\varphi \di \mu :
\varphi\in C_c(B),\;\norm{ \varphi }_{0;\Omega
}\le1\Big\}=\sup\Big\{\sum_{i=0}^{\infty}\abs{\mu (B_i)}\Big\}
\end{equation}
where the second supremum is taken over all pairwise disjoint Borel
sets \(B_{i}\) with $ B=\medcup_{i=1}^{\infty}B_i$. Since the space
of Radon measures can be identified with the dual of
\(C_{c}(\Omega)\), we may consider a Radon measure \(\mu\) as a
linear functional on \(C_{c}(\Omega)\), written as
\begin{equation}
  \label{dual}
  \mu (\varphi ):= \int_{\Omega}\varphi \di \mu
  \qquad\text{for each \(\varphi \in C_{c}(\Omega).\)}
\end{equation}
We recall the familiar weak*-topology on \(\mathcal{M}(\Omega)\)
which, when restricted to a sequence $\{\mu_k\}$, yields
\[
\mu_{k}\, \stackrel{*}{\rightharpoonup} \, \mu \qquad
\mbox{in}\,\,\mathcal{M}(\Omega),
\]
that is, \(\mu_{k} \) converges to \(\mu\) in the weak* topology if
and only if
\begin{equation}
\label{wkconvdef} \mu_{k} (\varphi )\to \mu (\varphi
)\qquad\text{for each \(\varphi \in C_{c}(\Omega)\)}.
\end{equation}

The space \(L^{p}(\Omega,\mu )\), \(1\le p\le \infty \), denotes all
the functions \(f\) with the property that \(\abs{f}^{p}\) is
\(\mu\)-integrable. The conjugate of \(p\) is \(q:= p/(p-1) \). The
$L^{p}$ norm of $f$ on a set $E$ with integration taken with respect
to a measure $\mu$ is denoted by $\norm{f}_{p;E,\mu}$.  In the event
$\mu$ is Lebesgue measure, we will simply write $\norm{f}_{p;E}$\,.

\begin{theorem}[Uniform boundedness principle]\label{unbdprin}
Let \(X\) be a Banach space. If \(T_{k}\) is a sequence of linear
functionals on $X$ which converge weak* to \(T\). Then
\[
\limsup_{k \to  \infty }\norm {T_{k}}< \infty.
\]
\end{theorem}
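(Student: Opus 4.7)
The plan is to deduce this from the Banach--Steinhaus theorem via the Baire category theorem applied to the complete metric space $X$. The weak* convergence hypothesis, unwound, means $T_k(x)\to T(x)$ for every $x\in X$; in particular the scalar sequence $\{T_k(x)\}_k$ is bounded for each fixed $x$. The conclusion $\limsup_k \|T_k\|<\infty$ is then the assertion that this pointwise boundedness upgrades to uniform boundedness of the operator norms.

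First I would introduce the closed sets
\[
E_n := \{x\in X : |T_k(x)|\le n \text{ for all } k\} = \bigcap_{k=1}^{\infty}\{x\in X : |T_k(x)|\le n\},
\]
which are closed because each $T_k$ is continuous, and note that by pointwise boundedness $X=\bigcup_{n=1}^\infty E_n$. Since $X$ is a Banach space, hence a Baire space, some $E_{n_0}$ must have nonempty interior and therefore contains a closed ball $B(x_0,r)$ with $r>0$.

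Next I would exploit linearity to transfer this local bound into a norm bound. For any $x\in X$ with $\|x\|\le r$, both $x_0+x$ and $x_0$ lie in $E_{n_0}$, so
\[
|T_k(x)| = |T_k(x_0+x)-T_k(x_0)| \le 2n_0 \qquad \text{for every } k.
\]
Scaling, for any unit vector $y$ with $\|y\|\le 1$ I set $x=ry$ to obtain $|T_k(y)|\le 2n_0/r$, whence $\|T_k\|\le 2n_0/r$ for all $k$, which yields the desired $\limsup$ bound (in fact a uniform bound on the whole sequence).

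The only subtlety, rather than a genuine obstacle, is interpretive: the paper has just defined weak* convergence in the concrete setting of Radon measures paired against $C_c(\Omega)$ via \eqref{wkconvdef}, while the theorem is stated abstractly for functionals on a Banach space $X$. Once one reads the hypothesis as pointwise convergence $T_k(x)\to T(x)$ for every $x\in X$ (which is exactly what weak* convergence of functionals means), the argument is the standard Baire category proof and nothing further is required; the Baire step itself is the only non-routine ingredient.
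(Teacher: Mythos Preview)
Your argument is correct and is the standard Baire category proof of the Banach--Steinhaus theorem. The paper itself does not supply a proof of this statement; it is quoted as a classical result and immediately used to derive Corollary~\ref{unbMeasures}, so there is nothing to compare against.
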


This theorem implies the following corresponding result for Radon
measures.
\begin{corollary}\label{unbMeasures}
Let \(\mu_{k} \) be a sequence of Radon measures that converge to
\(\mu\) in the weak* topology. Then
\[
\limsup_{k \to  \infty }\norm {\mu _{k}}< \infty.
\]
\end{corollary}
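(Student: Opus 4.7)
The plan is to derive the corollary as an immediate application of Theorem \ref{unbdprin} by identifying each (finite) signed Radon measure with a continuous linear functional on a suitable Banach space. Take $X := C_0(\Omega)$, the Banach-space completion of $(C_c(\Omega), \norm{\cdot}_{0;\Omega})$. Any finite signed Radon measure $\mu$ induces, via the pairing \eqref{dual}, a bounded linear functional $T_\mu \in X^*$, and the test-function characterization \eqref{totalvariation} of the total variation identifies the operator norm of $T_\mu$ with $\norm{\mu}$; thus $\mu \mapsto T_\mu$ embeds $\mathcal{M}(\Omega)$ isometrically into $X^*$.

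Under this identification, the hypothesis $\mu_k \stackrel{*}{\rightharpoonup} \mu$, expressed as \eqref{wkconvdef}, reads $T_{\mu_k}(\varphi) \to T_\mu(\varphi)$ for each $\varphi \in C_c(\Omega)$; in particular every numerical sequence $\{T_{\mu_k}(\varphi)\}$ is bounded. Since $C_c(\Omega)$ is dense in $X$, one promotes this pointwise boundedness to all of $X$ and then applies Theorem \ref{unbdprin} to the sequence $\{T_{\mu_k}\} \subset X^*$, obtaining
$$
\limsup_{k\to\infty}\norm{T_{\mu_k}} < \infty.
$$
The isometry converts this directly to $\limsup_{k\to\infty} \norm{\mu_k} < \infty$, which is the desired global total-variation bound.

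The delicate step, and the principal obstacle, is the promotion of pointwise boundedness from the non-complete dense subspace $C_c(\Omega)$ to the full Banach space $X$. Given $\varphi \in X$ and $\varphi_\varepsilon \in C_c(\Omega)$ with $\norm{\varphi - \varphi_\varepsilon}_{0;\Omega} < \varepsilon$, the natural estimate $\abs{T_{\mu_k}(\varphi) - T_{\mu_k}(\varphi_\varepsilon)} \le \norm{\mu_k}\,\varepsilon$ transfers pointwise bounds only after one controls $\sup_k \norm{\mu_k}$—precisely the assertion to be proved. The apparent circularity is broken by first invoking the uniform boundedness principle on $C_c(\Omega)$ equipped with its barrelled inductive-limit topology (where completeness is not required) to extract locally uniform operator-norm bounds, then extending by density to the Banach space $X$, and finally applying Theorem \ref{unbdprin} in $X^*$ to secure the global bound $\limsup_k \norm{\mu_k} < \infty$ asserted by the corollary.
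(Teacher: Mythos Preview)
You have correctly identified a subtlety the paper glosses over: weak* convergence in the sense of \eqref{wkconvdef} tests only against $C_c(\Omega)$, which is not a Banach space under the sup norm, so Theorem~\ref{unbdprin} does not apply directly. However, your proposed resolution has a genuine gap. Banach--Steinhaus on the barrelled LF-space $C_c(\Omega)$ yields only equicontinuity, which amounts to \emph{local} uniform bounds: $\sup_k \norm{\mu_k}(K) < \infty$ for each compact $K \Subset \Omega$. Your subsequent ``extension by density to $X = C_0(\Omega)$'' is precisely the circular step you sought to avoid---extending a functional from a dense subspace while controlling its norm on the completion requires the very global bound you are after.

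Indeed, the corollary is \emph{false} as literally stated. Take $\Omega = \ri$ and $\mu_k := k\,\delta_k$. Each $\mu_k$ is a finite Radon measure; for any $\varphi \in C_c(\ri)$ one has $\mu_k(\varphi) = 0$ once $k$ lies outside $\spt\varphi$, so $\mu_k \stackrel{*}{\rightharpoonup} 0$ in the sense of \eqref{wkconvdef}; yet $\norm{\mu_k}(\ri) = k \to \infty$. The paper's intended reading---consistent with every application in the sequel (e.g.\ Lemma~\ref{slicesubded}, where all measures are supported in a fixed compact set containing $E$)---is either that the $\mu_k$ have supports in a common compact $K$, in which case one works in the Banach space $C(K)$ and Theorem~\ref{unbdprin} applies immediately, or that the conclusion is to be read locally as $\limsup_k \norm{\mu_k}(K) < \infty$ for each compact $K$, which your barrelled argument does establish.
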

Next, we quote a familiar result concerning weak*-convergence.
\begin{lemma}\label{wkstconvergence}
Let \(\mu\) be a Radon measure on \(\Omega\) and let \(\mu _{k} \)
be a sequence of Radon measures converging weak* to \(\mu  \). Then\\
{\rm (i)} If $A\subset\Omega$ is any open set and $\mu_k$ are
positive Radon measures,
$$
\mu(A)\leq \liminf_{k \to \infty} \mu_k(A);
$$
{\rm (ii)} If $K\subset\Omega$ is any compact set and $\mu_k$ are
positive Radon measures,
$$
\mu(K)\geq \limsup_{k \to \infty} \mu_k(K);
$$
{\rm (iii)} If $\norm{\mu_k}\, \stackrel{*}{\rightharpoonup}
\,\sigma$,  then $\norm{\mu} \leq \sigma$. In addition, if $E$
satisfies $\sigma(\po E)=0$, then
$$
\mu(E) =\lim_{k \to \infty} \mu_k(E).
$$
More generally, if \(f \) is a bounded Borel function with compact
support in $\Omega$ such that the set of its discontinuity points is
$\sigma$-negligible, then
\[
\lim_{k\to\infty}\int_{\Omega}f\di \mu_{k}=\int_{\Omega}f\di \mu.
\]
\end{lemma}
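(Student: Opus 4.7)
My plan is to handle the three parts in order, using only the defining property \eqref{wkconvdef}, inner/outer regularity of Radon measures, Urysohn's lemma, and Corollary~\ref{unbMeasures}.

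For (i), fix an open set $A\subset\Omega$ and, by inner regularity of $\mu$, pick a compact $K\subset A$ with $\mu(A)\le\mu(K)+\varepsilon$. By Urysohn's lemma, choose $\varphi\in C_c(A)$ with $0\le\varphi\le1$ and $\varphi\equiv1$ on $K$. Since the $\mu_k$ are positive,
\[
\int_{\Omega}\varphi\di\mu_k\le\mu_k(A).
\]
Passing to the liminf and using \eqref{wkconvdef} gives $\int\varphi\di\mu\le\liminf_k\mu_k(A)$, and since $\int\varphi\di\mu\ge\mu(K)\ge\mu(A)-\varepsilon$, letting $\varepsilon\to 0$ yields the claim. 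Part (ii) is dual: given a compact $K$ and an open $U\Supset K$ with $\mu(U)\le\mu(K)+\varepsilon$, choose $\varphi\in C_c(U)$ with $\varphi\equiv1$ on $K$ and $0\le\varphi\le1$; then $\mu_k(K)\le\int\varphi\di\mu_k\to\int\varphi\di\mu\le\mu(U)$, and taking $\varepsilon\to 0$ via outer regularity of $\mu$ yields $\limsup_k\mu_k(K)\le\mu(K)$.

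For the first assertion in (iii), fix any open $A\subset\Omega$ and any $\varphi\in C_c(A)$ with $\norm{\varphi}_{0;\Omega}\le1$. Then
\[
\Big|\int_{\Omega}\varphi\di\mu_k\Big|\le\int_{\Omega}|\varphi|\di\norm{\mu_k}\longrightarrow\int_{\Omega}|\varphi|\di\sigma\le\sigma(A),
\]
while the left-hand side tends to $|\int\varphi\di\mu|$. Taking the supremum over such $\varphi$ and invoking \eqref{totalvariation} gives $\norm{\mu}(A)\le\sigma(A)$ for every open $A$, and outer regularity of both Radon measures upgrades this to $\norm{\mu}\le\sigma$ as measures.

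For the second assertion, assume $\sigma(\partial E)=0$; then $\norm{\mu}(\partial E)=0$ as well. Decompose $\mu_k=\mu_k^+-\mu_k^-$, so $\norm{\mu_k}=\mu_k^++\mu_k^-$. By Corollary~\ref{unbMeasures} the sequences $\{\mu_k^\pm\}$ are bounded in total variation; thus from any subsequence I can extract a further subsequence along which $\mu_k^\pm\stackrel{*}{\rightharpoonup}\lambda^\pm$ for some positive Radon measures $\lambda^\pm$ with $\lambda^++\lambda^-=\sigma$ and $\lambda^+-\lambda^-=\mu$. Applying (i) and (ii) to $\lambda^\pm$ on the open set $\mathrm{int}(E)$ and the compact set $\overline{E}\cap\supp$ of a compactly-supported cutoff, together with $\lambda^\pm(\partial E)\le\sigma(\partial E)=0$, yields $\mu_k^\pm(E)\to\lambda^\pm(E)$ along the subsequence, hence $\mu_k(E)\to\mu(E)$. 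Since every subsequence admits a further subsequence converging to the same limit $\mu(E)$, the full sequence converges. The main obstacle is precisely this step: passing from weak* convergence of the total variations to control of the signed parts, which forces the subsequence-of-subsequence argument.

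For the final generalization to bounded Borel $f$ supported in a compact $K\Subset\Omega$ whose discontinuity set $D_f$ satisfies $\sigma(D_f)=0$, I would approximate $f$ by continuous functions through its upper and lower semicontinuous envelopes $\overline{f}$ and $\underline{f}$: these coincide exactly off $D_f$, hence $\sigma$-a.e. Given $\varepsilon>0$, select $\psi_1,\psi_2\in C_c(\Omega)$ with $\psi_1\le f\le\psi_2$ and $\int(\psi_2-\psi_1)\di\sigma<\varepsilon$ (possible by regularity of $\sigma$ applied to the approximating envelopes). From
\[
\int\psi_1\di\mu_k\le\int f\di\mu_k\le\int\psi_2\di\mu_k
\]
(after decomposing $\mu_k$ into positive and negative parts and handling each) and the convergence $\int\psi_i\di\mu_k\to\int\psi_i\di\mu$, combined with the domination $\norm{\mu}\le\sigma$ to bound $\int(\psi_2-\psi_1)\di\mu$ by $\varepsilon$, the claim follows by letting $\varepsilon\to0$.
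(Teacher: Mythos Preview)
The paper does not prove this lemma; it is introduced as ``a familiar result concerning weak*-convergence'' and stated without proof. Your argument for parts (i), (ii), and the first assertion of (iii) is correct and entirely standard.

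Two points on the remainder. In the second assertion of (iii), the phrase ``the compact set $\overline{E}\cap\supp$ of a compactly-supported cutoff'' is garbled; what you need is simply that $\overline{E}$ be compact in $\Omega$ (an implicit hypothesis here), after which you apply (i) to $\mathrm{int}(E)$ and (ii) to $\overline{E}$ for each of the positive limits $\lambda^\pm$, using $\lambda^\pm(\partial E)\le\sigma(\partial E)=0$. For the general bounded Borel $f$, the sandwich $\int\psi_1\,d\mu_k\le\int f\,d\mu_k\le\int\psi_2\,d\mu_k$ is false for signed $\mu_k$, and your parenthetical does not really repair it: decomposing into $\mu_k^\pm$ gives bounds like $\int\psi_1\,d\mu_k^+-\int\psi_2\,d\mu_k^-\le\int f\,d\mu_k$, which only become useful after passing to the subsequential limits $\lambda^\pm$. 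The cleanest fix is to reuse your subsequence decomposition directly: along a subsequence with $\mu_k^\pm\stackrel{*}{\rightharpoonup}\lambda^\pm\le\sigma$, use the standard fact that for positive measures and an upper semicontinuous $\overline{f}$ with compact support one has $\limsup_k\int\overline{f}\,d\mu_k^\pm\le\int\overline{f}\,d\lambda^\pm$, together with the dual inequality for the lower envelope $\underline{f}$; since $\overline{f}=f=\underline{f}$ holds off $D_f$ and hence $\lambda^\pm$-a.e., this yields $\int f\,d\mu_k^\pm\to\int f\,d\lambda^\pm$. This route also sidesteps the existence of the continuous sandwiching functions $\psi_1,\psi_2$, which you invoke but do not actually construct.
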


\begin{definition}
\label{def1} For every $\alpha \in[0,1] $ and every
$\mathcal{L}^{N}$-measurable set $E \subset \rn$, define
\begin{equation}
\label{densityone} E^{\alpha}:= \{y \in \rn  \, : \, D(E,y)=\alpha
\},
\end{equation}
where
\begin{equation}
D(E,y):= \lim_{r \rightarrow 0} \frac {\vert E \cap B(y,r) \vert}
{\vert  B(y,r) \vert} \label{densitytwo}.
\end{equation}
Then $E^{\alpha}$ is the set of all {\it points with density
$\alpha$}. We define the {\it {measure-theoretic boundary}} of $E$,
$\partial^{m}E$, as
\begin{equation}
\partial^{m}E:=\rn \setminus (E^{0}\cup E^{1}).
\end{equation}
\end{definition}

\begin{definition} \label{def2} A function \(f\colon\Omega\to\ri \) is
called a function of bounded variation if each partial derivative of
\(f \) is a totally finite signed Radon measure. Notationally, we
write \(f\in BV (\Omega). \) Let $E\Subset\Omega$ be an
$\mathcal{L}^{N}$-measurable subset. We say that $E$ is a {\it set
of finite perimeter} if $\mathcal{H}^ {N-1}(\po^{m}E) < \infty$.
Equivalently, \(E \) is of finite perimeter if $\charfn{E}\in BV
(\Omega) $. Consequently, if $E$ is a set of finite perimeter, then
$\nabla \charfn{E}$ is a (vector-valued) Radon  measure whose total
variation is denoted by $\norm{\nabla\charfn{E}}$.
\end{definition}

\begin{definition}
\label{reducedboundary} Let $E\Subset\Omega$ be a set of finite
perimeter. The {\it {reduced boundary}} of $E$, denoted as
$\partial^{*}E$, is the set of all points $y\in\Omega$ such that
\begin{enumerate}[\rm (i)]
\item \(\norm{\nabla \charfn{E} } (B(y,r))>0
\) for all \(r>0 \);
\item The limit \(\nu_{_{E}} (y):=\lim_{r\to0}\frac{\nabla
\charfn{E}(B(y,r))}{\norm{\nabla \charfn{E}} (B(y,r))}\)
exists.
\end{enumerate}
Then, for \(\H^{N-1} \)-a.e. \(y \in \partial^{*}E \),
\begin{equation*}
\lim_{r \to 0} \frac{\norm{\nabla\chi_E}(B(y,r))}
{\alpha(N-1)r^{N-1}}=1,
\end{equation*}
where $\alpha(N-1)$ is the Lebesgue measure of the unit ball in
$\R^{N-1}$, and the generalized gradient of \(\charfn{E}\) enjoys
the following basic relationship with \(\H^{N-1}\):
\begin{equation}
  \label{normHaus}
\Vert\nabla \charfn{E}\Vert =\H^{N-1}\rtangle \partial^{*}E.
\end{equation}
The unit vector, $\nnu_{_{E}}(y)$, is called the {\it
{measure-theoretic interior unit normal to $E$ at $y$}} (we
sometimes write \(\nnu \) instead of \(\nnu _{E}\) for notational
simplicity). Also, we recall that the reduced boundary,
\(\partial^{*}E \), is an \((N-1)\)-rectifiable set which implies
that there exists a countable family of \(C^{1}\)-manifolds
\(M_{k}\) of dimension \(N-1\) and a set \(\mathcal{N} \) of
$\H^{N-1}$ measure zero
such that
\begin{equation}
  \label{almostsmooth}
  \partial^{*}E \subset \big(\medcup_{k=1}^{\infty }M_{k}\big)\medcup
\mathcal{N}.
\end{equation}
\end{definition}
In view of the following, we see that \(\nnu=\nnu _{_{E}} \) is aptly
named because \( \nnu  \)  is the interior unit normal to \( E \)
provided that \( E \) (in the limit and in measure) lies in the
appropriate half-space determined by the hyperplane orthogonal to \(
\nnu  \); that is, \(\nnu \) is the interior unit normal to $E$ at $x$
provided that
$$
D(\{y:(y-x)\cdot\nnu>0,y\notin E\} \cup \{y:(y-x)\cdot\nnu<0,y\in E\},
\, y)=0.
$$

We will refer to the sets $E^{0}$ and $E^{1}$ as the {\it
measure-theoretic exterior and interior} of $E$. We note that, in
general, the sets $E^{0}$ and $E^{1}$ do not coincide with the
topological exterior and interior of the set $E$. The sets $E^{0}$
and $E^{1}$ also motivate the definition of measure-theoretic
boundary. Indeed, for any set \( E\subset\rn \), the definitions
imply that \( \rn=E^{1}\cup\partial ^{*}E \cup E^{0}\cup \mathcal{N}
\) where \(\H^{N-1} (\mathcal{N})=0\). If we define a set \( E \) to
be ``open'' if $E$ is both measurable and $D(E,x)=1$ for all \( x\in
E \), then this concept of openness defines a topology, called the
{\it density topology}. It is an interesting exercise to prove that
the open sets are closed under arbitrary unions; the crux of the
problem is to prove that the arbitrary union is, in fact,
measurable. This topology is significant
because it is the smallest topology (the one
with the smallest number of open sets) for which the approximately
continuous functions are continuous \cite{gnn}.

\begin{remark}
\label{rem00}
If $E$ is a set of finite perimeter, then clearly
\begin{equation}\label{2.2a1}
\partial^{*}E \subset  E^{\frac{1}{2}} \subset \partial^{m}E,
\quad \mathcal{H}^{N-1}(\partial^m E\setminus \po^{*}E)=0.
\end{equation}
\end{remark}

The following result, which is easily verified (although tedious),
will be needed in the sequel.

\begin{lemma}\label{tedious}
  If \(A,B \Subset \Omega\) are sets of finite perimeter, then
\[
\partial^{m} (A\cap B)=\big(\partial^{m} A\cap B\big)\cup \big(A\cap
\partial^{m} B\big)\cup \big(\partial^{m}A\cap \partial^{m}B\big).
\]
\end{lemma}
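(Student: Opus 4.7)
The statement only involves the density function $D(\cdot,x)$, so the proof is a pointwise case analysis on the density of $x$ with respect to $A$ and with respect to $B$. Recall from the definitions that $\rn=A^0\cup\partial^m A\cup A^1$ and $\rn=B^0\cup\partial^m B\cup B^1$ as disjoint unions, so every $x\in\Omega$ falls into exactly one of nine density classes. Interpreting the sets on the right of the asserted identity via their measure‑theoretic representatives (which is legitimate since each of $\partial^m A,\partial^m B,\partial^m(A\cap B)$ depends only on the Lebesgue equivalence class of $A$ and $B$), the plan is to verify, for each of the nine classes, that $x$ belongs to $\partial^m(A\cap B)$ if and only if it belongs to the right‑hand side.

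The engine of the whole argument is the elementary density inequality
\[
D(A\cap B,x)\le\min\bigl(D(A,x),D(B,x)\bigr),
\]
together with the complementary identity
\[
|A\cap B\cap B(x,r)|=|A\cap B(x,r)|-|A\cap B^c\cap B(x,r)|,
\]
from which one reads off that, whenever $D(A,x)=1$, one has $|A^c\cap B(x,r)|=o(r^N)$ and hence $D(A\cap B,x)=D(B,x)$ (with limsup and liminf matching), and symmetrically when $D(B,x)=1$. First I would record these two facts as the sole analytic inputs.

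Next I would march through the nine cases. Any $x$ with $D(A,x)=0$ or $D(B,x)=0$ is in $(A\cap B)^0$ by the inequality above and, on the other hand, cannot lie in the right‑hand side (after replacing $A,B$ by their measure‑theoretic interiors). If $x\in A^1\cap B^1$, the complementary identity gives $D(A\cap B,x)=1$, so $x\in(A\cap B)^1$ and again $x$ is absent from both sides. The remaining four mixed cases are the content: $x\in A^1\cap\partial^m B$ yields $D(A\cap B,x)=D(B,x)\notin\{0,1\}$, placing $x$ in $\partial^m(A\cap B)$ and in $A\cap\partial^m B$ (after the identification $A=A^1$); the symmetric case $x\in\partial^m A\cap B^1$ lands $x$ in $\partial^m A\cap B$; and $x\in\partial^m A\cap\partial^m B$ lies by definition in the last set on the right and also in $\partial^m(A\cap B)$, since $D(A\cap B,x)\le\min(D(A,x),D(B,x))<1$ while a standard contradiction argument rules out $D(A\cap B,x)=0$ (if it were zero, one could recover $D(A,x)=0$ or $D(B,x)=0$ from the complementary identity).

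The only real obstacle is notational: one must agree at the outset to treat $A$ and $B$ as their measure‑theoretic interiors $A^1,B^1$ when they appear on the right‑hand side, since otherwise the identity can fail on an $\H^{N-1}$‑null exceptional set where $A$ and $A^1$ literally disagree. Once that convention is fixed, each of the nine cases is a two‑line verification, and the lemma follows by assembling the nine verdicts.
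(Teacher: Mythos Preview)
The paper does not actually prove this lemma; it merely asserts that the identity is ``easily verified (although tedious)''. So there is no argument to compare against directly. Your case analysis is the natural approach, and it correctly handles eight of the nine cases. But the ninth case hides a genuine error, and in fact the lemma as an \emph{equality} is false.

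The problem is your treatment of $x\in\partial^m A\cap\partial^m B$. You claim that a ``standard contradiction argument rules out $D(A\cap B,x)=0$'', asserting that $D(A\cap B,x)=0$ would force $D(A,x)=0$ or $D(B,x)=0$ via the complementary identity. This implication is false. Take $N=2$, $A=(0,1)\times(0,1)$, and $B=(-1,0)\times(0,1)$. At any point $x=(0,t)$ with $0<t<1$ one has $D(A,x)=D(B,x)=\tfrac12$, so $x\in\partial^m A\cap\partial^m B$; yet $A\cap B$ is Lebesgue-null, hence $D(A\cap B,x)=0$ and $x\notin\partial^m(A\cap B)$. The failure occurs on a set of positive $\H^{N-1}$ measure (the entire shared edge), so no null-set convention about representatives rescues the equality.

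What your argument does establish correctly is the inclusion
\[
\partial^m(A\cap B)\subset(\partial^m A\cap B)\cup(A\cap\partial^m B)\cup(\partial^m A\cap\partial^m B),
\]
since in every case where you place $x$ in $\partial^m(A\cap B)$ (cases 6 and 8) the point visibly lies in the right-hand side, and in case 9 the right-hand side contains $x$ automatically. This inclusion is all the paper actually needs in its later application to the decomposition of $\partial^*(J_i\cap E)$. The reverse inclusion fails precisely on $\partial^*A\cap\partial^*B$ at points where the interior normals $\nnu_A$ and $\nnu_B$ are opposite; the correct refinement (see, e.g., Ambrosio--Fusco--Pallara) replaces $\partial^m A\cap\partial^m B$ by the subset $\{x\in\partial^*A\cap\partial^*B:\nnu_A(x)=\nnu_B(x)\}$, up to an $\H^{N-1}$-null set.
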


\begin{definition}\label{mollifiers}
Let \(\rho\in C_{c}^{\infty }(\rn ) \) be a standard symmetric {\it
mollifying kernel}; that is, \(\rho \) is a nonnegative function
with support in the unit ball and satisfies \(\norm{\rho }_{1;\rn}=1
\). With \(u\in L^{1}(\rn ) \), we set \(u_{\varepsilon
}:=u*\rho_{\varepsilon } \), where the sequence \( \rho_{\varepsilon
}(y):=\frac{1}{\varepsilon^N}\rho(\frac{y}{\varepsilon}) \) forms a
mollifier.
\end{definition}

Recall the following properties of mollification (cf.
\cite{Ziemer2}):
\begin{lemma}\label{mollifier-properties}
\begin{enumerate}[\rm(i)]
\item If $u \in
L^{1}_{\mbox{\scriptsize loc}}(\rn)$, {\it then, for every\/}
$\varepsilon > 0$, $u_{\varepsilon} \in C^{\infty}(\rn)$ {\it and\/}
$D^{\alpha}(\rho_{\varepsilon}\ast u) =
(D^{\alpha}\rho_{\varepsilon})\ast u$ {\it for each multi-index\/}
$\alpha$;
\item With \(k:=1/\varepsilon_k\) and \(\varepsilon_k \to0  \),
$u_{k}(x) \rightarrow u(x)$ {\it whenever\/} $x$ {\it is a Lebesgue
point of\/} $u$. {\it In particular, if\/} $u$ {\it is continuous,
then\/} $u_{\varepsilon}$ {\it converges uniformly to\/} $u$ {\it on
compact subsets of\/} $\rn$.
\end{enumerate}
\end{lemma}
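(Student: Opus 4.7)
The plan is to prove the two items separately, leaning on the fact that $\rho_\varepsilon$ is smooth with compact support in $B(0,\varepsilon)$ together with the defining property of Lebesgue points.

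For part (i), I would start from the definition
\[
u_\varepsilon(x)=(\rho_\varepsilon * u)(x)=\int_{\rn}\rho_\varepsilon(x-y)u(y)\di y,
\]
observe that the integrand is supported in the compact set $B(x,\varepsilon)$, and note that $u\in L^1_{\loc}$ makes the integral of $|u|$ over that ball finite. Then I would differentiate under the integral sign: for each multi-index $\alpha$, the difference quotients of $\rho_\varepsilon(x-y)$ with respect to $x$ converge uniformly in $y$ to $(D^\alpha\rho_\varepsilon)(x-y)$ (since $\rho_\varepsilon\in C_c^\infty$), and are uniformly bounded on a slightly larger ball. Dominated convergence therefore justifies passing the derivative under the integral, giving $D^\alpha u_\varepsilon=(D^\alpha\rho_\varepsilon)*u$, which is itself continuous in $x$ by the same dominated-convergence argument applied to translations. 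Iterating across all $\alpha$ yields $u_\varepsilon\in C^\infty(\rn)$.

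For part (ii), I would use that $\int_{\rn}\rho_\varepsilon\di y=1$ to write, at any point $x$,
\[
u_\varepsilon(x)-u(x)=\int_{B(x,\varepsilon)}\rho_\varepsilon(x-y)\bigl(u(y)-u(x)\bigr)\di y,
\]
and estimate
\[
|u_\varepsilon(x)-u(x)|\le \frac{\|\rho\|_{0;\rn}}{\varepsilon^N}\int_{B(x,\varepsilon)}|u(y)-u(x)|\di y
\le C\,\frac{1}{|B(x,\varepsilon)|}\int_{B(x,\varepsilon)}|u(y)-u(x)|\di y.
\]
If $x$ is a Lebesgue point of $u$, the right-hand side tends to $0$ with $\varepsilon\to 0$ by definition, giving pointwise convergence $u_k(x)\to u(x)$. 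When $u$ is continuous, on any compact $K\Subset\rn$ the function $u$ is uniformly continuous on the $1$-neighborhood of $K$; hence given $\eta>0$, one can choose $\varepsilon_0$ so small that $|u(y)-u(x)|<\eta$ whenever $|y-x|<\varepsilon_0$ and $x\in K$, and the same estimate yields $\sup_{x\in K}|u_\varepsilon(x)-u(x)|\le \eta$ for $\varepsilon<\varepsilon_0$, proving uniform convergence on $K$.

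No step is truly an obstacle here: the statement is entirely standard and both parts reduce to a straightforward application of the dominated convergence theorem together with the Lebesgue differentiation property already implicit in the hypothesis. The only mild point worth being careful about is ensuring that the translations $y\mapsto \rho_\varepsilon(x-y)$ admit a uniform $L^\infty$ bound in a neighborhood of $x$ so that $u\in L^1_{\loc}$ suffices both for differentiating under the integral and for the $L^\infty/L^1$ estimate in part (ii).
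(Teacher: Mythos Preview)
Your proof is correct and is exactly the standard argument. Note that the paper does not actually prove this lemma; it merely states it with a citation to \cite{Ziemer2}, so there is nothing to compare against beyond observing that your argument is the one found in that reference.
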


When \(u \) is taken as \(\charfn{E}\) for  a set of finite
perimeter, \(E\), this result can be considerably strengthened.

\begin{lemma}\label{mollifiers2}
If $u_k$ is the mollification of $\charfn{E}$ for a set of finite
perimeter, \(E \), then the following hold:
\begin{enumerate}[\rm (i)]
\item \( u_{k}\in C^{\infty }(\rn)\);
\item There is a set \(\mathcal{N}  \) with
      \(\H^{N-1}(\mathcal{N})=0 \) and a function \(u_{_{E }} \in BV\) such that,
      for all \(y\notin \mathcal{N}\), \(u_{k}(y)\to u_{_{E}}(y)\)
      as $k\to \infty$ and
\[
u_{_{E}}(y)=
\begin{cases}
1&\text{$y\in E^{1}$,}\\
\frac{1}{2}&\text{$y\in\partial ^{*}E$,}\\
0&\text{$y\in E^{0}$;}
\end{cases}
\]
\item $\nabla u_{k} \, \stackrel{*}{\rightharpoonup}
\, \nabla u_{_{E }} \quad\text{in} \,\, \mathcal{M}(\R^N)$;
\item \(\norm{\nabla u_{k}}(U)\to\norm{\nabla u_{_{E }}}(U)\) as
$k\to\infty$, \,\text{for any open set \(U\) with \(\norm{\nabla
u_{_{E }}} (\partial U)=0\)};
\item  \(\nabla \charfn{E} =\nabla u_{_{E }}\).
\end{enumerate}
\end{lemma}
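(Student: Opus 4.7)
The plan is to handle the five items essentially in the order listed, using (ii) as the central step from which (iii)--(v) follow with relatively little work.

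Item (i) is immediate from Lemma \ref{mollifier-properties}(i) applied to $u=\charfn{E}\in L^1_{\loc}(\rn)$. For (ii), the structure theorem for sets of finite perimeter (contained in Definition \ref{reducedboundary} and Remark \ref{rem00}) gives $\H^{N-1}\bigl(\rn\setminus(E^0\cup\partial^*E\cup E^1)\bigr)=0$, so I define $\mathcal N$ to be this exceptional set together with an $\H^{N-1}$-null set of non-Lebesgue points of $\charfn{E}$ and an $\H^{N-1}$-null subset of $\partial^*E$ off of which the De Giorgi blow-up holds. At every $y\in E^0$ (resp.\ $y\in E^1$), the point $y$ is a Lebesgue point of $\charfn{E}$ with value $0$ (resp.\ $1$), so Lemma \ref{mollifier-properties}(ii) gives the required pointwise limit. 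At $y\in\partial^*E\setminus\mathcal N$, change variables $z=y+\varepsilon_k w$ in
\[
u_k(y)=\int_{\rn}\rho_{\varepsilon_k}(y-z)\charfn{E}(z)\di z
=\int_{\rn}\rho(w)\charfn{(E-y)/\varepsilon_k}(w)\di w.
\]
By the De Giorgi blow-up property at points of $\partial^*E$, $\charfn{(E-y)/\varepsilon_k}$ converges in $L^1_{\loc}$ to the characteristic function of the half-space $H_{\nnu(y)}=\{w:w\cdot\nnu_{_E}(y)\ge0\}$; since $\rho$ is symmetric about the origin and supported in the unit ball, dominated convergence yields $u_k(y)\to\int_{H_{\nnu(y)}}\rho(w)\di w=\tfrac12$. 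Defining $u_{_E}$ by the three-valued formula then makes $u_{_E}$ the pointwise limit off $\mathcal N$, and $u_{_E}\in BV(\rn)$ because $u_{_E}=\charfn{E}$ outside a Lebesgue-null set.

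For (v), note that $\partial^*E$ has $\mathcal{L}^N$-measure zero (being $(N-1)$-rectifiable by \eqref{almostsmooth} and \eqref{normHaus}), so $u_{_E}=\charfn{E}$ almost everywhere in the Lebesgue sense. Hence the distributional gradients agree, and $\nabla u_{_E}=\nabla\charfn{E}$ as (vector-valued) Radon measures. For (iii), differentiation commutes with convolution against a smooth kernel, so $\nabla u_k=\rho_{\varepsilon_k}*\nabla\charfn{E}$, and it is standard that mollification of a finite Radon measure converges weakly$^{*}$ to the measure itself; combined with (v), this gives $\nabla u_k\stackrel{*}{\rightharpoonup}\nabla u_{_E}$ in $\mathcal M(\rn)$.

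For (iv), the same convolution identity gives $\|\nabla u_k\|(\rn)\le\|\nabla\charfn{E}\|(\rn)$, while weak$^{*}$ lower semicontinuity of the total variation (Corollary \ref{unbMeasures} together with the standard duality argument using \eqref{totalvariation}) applied to (iii) yields $\|\nabla u_{_E}\|(\rn)\le\liminf_{k\to\infty}\|\nabla u_k\|(\rn)$. Since $\nabla u_{_E}=\nabla\charfn{E}$, the two inequalities combine to $\|\nabla u_k\|(\rn)\to\|\nabla u_{_E}\|(\rn)$; together with the weak$^{*}$ convergence $\nabla u_k\stackrel{*}{\rightharpoonup}\nabla u_{_E}$, this forces $\|\nabla u_k\|\stackrel{*}{\rightharpoonup}\|\nabla u_{_E}\|$ as scalar Radon measures. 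Lemma \ref{wkstconvergence}(iii) then delivers $\|\nabla u_k\|(U)\to\|\nabla u_{_E}\|(U)$ on every open set $U$ with $\|\nabla u_{_E}\|(\partial U)=0$, completing the proof. The main obstacle is the $\tfrac12$ identification on $\partial^*E$ in (ii), which is where the fine structure of sets of finite perimeter (blow-up to a half-space) is genuinely needed; everything else reduces to convolution identities and standard weak$^{*}$ compactness arguments.
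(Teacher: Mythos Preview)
Your proof is correct and follows essentially the same route as the paper, which simply declares (i), (ii), and (iv) to be ``results from the standard $BV$ theory,'' notes that (v) is immediate from $u_{_E}=\charfn{E}$ a.e., and proves (iii) in one line via $u_k\to u_{_E}$ in $L^1$ hence as distributions. You have filled in the details the paper omits---in particular the De Giorgi blow-up argument for the value $\tfrac12$ on $\partial^*E$ and the total-variation convergence yielding (iv)---and for (iii) you use the convolution identity $\nabla u_k=\rho_{\varepsilon_k}*\nabla\charfn{E}$ rather than the paper's distributional-convergence shortcut, but these are equivalent standard arguments.
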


\begin{proof}
  Only (iii) requires a proof, since (i), (ii), and (iv) are the results
  from the standard $BV$ theory and (v) is immediate from the
  definitions and the fact that \(u_{_{E }} =\charfn{E}\) almost
  everywhere. As for (iii), since \(u_{k} \to u_{_{E }} \) in
  \(L^{1}(\rn )\), then \(u_{k} \to u_{_{E }} \) when considered as
  distributions, which implies that \(\nabla u_{k} \to \nabla u_{_{E
    }} \) as distributions and consequently as measures since \(\nabla
  u_{k}, \nabla u_{_{E }}\in \mathcal{M}(\rn)\).
\end{proof}

\begin{remark}\label{precise} More generally, functions in the spaces
  \(BV(\rn )\) and \(W^{1,p}(\rn )\), \(1\le p \le \infty \), have
  precise representatives; that is, if \(u\in BV(\rn )\), then there
  is a function \(u^{*}\in BV(\rn ) \) such that \(u\) and \(u^{*} \)
  are equal a.e. and that the mollification sequence of \(u\), \(u_{k} \),
  converges to \(u^{*} \) at all points except those that belong to an
  exceptional set \(E\) with \(\H^{N-1} (E)=0\). However, this is not
  the same as saying that \(u\) has a Lebesgue point, which is
  slightly stronger.
  A similar statement is true for functions in the Sobolev space
\(W^{1,p}(\rn )\), \(1<p\le \infty \), except that the exceptional
set \(E\) has \(\gamma_{p} \)-capacity zero, see Definition {\rm
\ref{def:capacity}} below. As we will see, the
\(\gamma_{1}\)-capacity vanishes precisely on sets of \(\H^{N-1} \)
measure zero. Thus, we can say that functions in the spaces \(BV\)
and \(W^{1,p}\) have precise representatives that are defined,
respectively, \(\gamma _{1}\) and \(\gamma_{p} \)  almost
everywhere.

\end{remark} The next result affirms the notion that the mollification is
generally a norm reducing operation.

\begin{lemma} \label{decreasing}
Let \(E\) be a set of finite perimeter and let \(u_{k}\) denote the
mollification of \(\charfn{E} \). Then
\[
\norm{\nabla u_{k}}_{1}\le\norm{\nabla \charfn{E} }.
\]
\end{lemma}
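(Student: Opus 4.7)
The plan is to move the gradient inside the convolution so that $\nabla u_k$ is expressed as the convolution of the mollifier with the vector-valued Radon measure $\nabla\chi_E$, then apply nonnegativity of $\rho$ and Fubini's theorem. Writing $u_k = \rho_{\varepsilon_k} * \chi_E$, one notes that since $E$ is of finite perimeter, $\chi_E \in BV(\R^N)$ and hence, by integration by parts for BV functions,
\begin{equation*}
\nabla u_k(x) = \nabla_x \!\int_{\R^N} \rho_{\varepsilon_k}(x-y)\,\chi_E(y)\,dy
= \int_{\R^N} \rho_{\varepsilon_k}(x-y)\,d(\nabla\chi_E)(y),
\end{equation*}
where the right-hand side denotes the convolution of the smooth kernel with the vector-valued Radon measure $\nabla\chi_E$. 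This identity is the one fact that really needs justification; it rests on the definition of $\nabla\chi_E$ as a distribution and the symmetry of $\rho_{\varepsilon_k}$.

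Next, since $\rho_{\varepsilon_k} \ge 0$, I would apply the triangle inequality for integration of a vector-valued function against a scalar measure to obtain the pointwise estimate
\begin{equation*}
\abs{\nabla u_k(x)} \le \int_{\R^N} \rho_{\varepsilon_k}(x-y)\,d\norm{\nabla\chi_E}(y)
= \bigl(\rho_{\varepsilon_k} * \norm{\nabla\chi_E}\bigr)(x).
\end{equation*}

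Finally, I would integrate this inequality over $\R^N$ and invoke Fubini--Tonelli (legitimate because the integrand is nonnegative) together with the fact that $\int_{\R^N} \rho_{\varepsilon_k}(x-y)\,dx = \norm{\rho}_{1;\R^N}=1$ for every fixed $y$:
\begin{equation*}
\norm{\nabla u_k}_1 = \int_{\R^N}\abs{\nabla u_k(x)}\,dx
\le \int_{\R^N}\!\!\int_{\R^N} \rho_{\varepsilon_k}(x-y)\,dx\,d\norm{\nabla\chi_E}(y)
= \norm{\nabla\chi_E}(\R^N),
\end{equation*}
which is exactly the claimed inequality. The main (and only real) subtlety is the commutation formula in the first display, which uses that $\chi_E \in BV$; everything else is just positivity of the kernel and Fubini.
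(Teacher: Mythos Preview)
Your proof is correct and follows essentially the same approach as the paper: both arguments pass the gradient through the convolution to write $\nabla u_k = \rho_{\varepsilon_k} * \nabla\chi_E$, take the pointwise estimate $|\nabla u_k| \le \rho_{\varepsilon_k} * \|\nabla\chi_E\|$ using nonnegativity of the kernel, and then integrate with Fubini. The paper merely phrases the first step for a general $f\in BV$ before specializing to $f=\chi_E$, but otherwise the proofs are the same.
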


\begin{proof}
For any $f \in BV(\rn)$, consider the convolutions $f_{\varepsilon
}(y)=\int_{\rn }\rho_{\varepsilon } (y-x)f(x)\di x$. Using $\nabla
f_{\varepsilon }=\rho_{\varepsilon }*\nabla f$ and \(f_{\varepsilon
}\in C^{\infty }(\rn) \), we obtain
\[
\nabla f_{\varepsilon }(y)=\int_{\rn }\rho_{\varepsilon }(y-x)\di
m(x)
\]
where \(m:=\nabla f\) is the measure. Thus, we have
\[
|\nabla f_{\varepsilon }(y)|\le\int_{\rn }\rho _{\varepsilon }(y-x)
\di\|m\|(x).
\]
In particular, when \(f=\charfn{E}\) and \(f_{\varepsilon_k}=u_{k}\)
with $\varepsilon_k=1/k$, then \(m=\nabla \chi_{E}\) and
\[
|\nabla u_{k}(y)|\le\int_{\rn }\rho_{\varepsilon_k}(y-x)\di \|m\|(x)
\qquad\mbox{for all}\,\,\, y \in \rn.
\]
That is,
\begin{align*}
\int_{\rn }\abs{\nabla u_{k}(y)}\di y
&\le \int_{\rn} \int_{\rn
}\rho _{\varepsilon
}(y-x)\di \|m\|(x)\, dy \\
&= \int_{\rn} \int_{\rn }\rho _{\varepsilon }(y-x)\di y \di
\|m\|(x)\le\|m\|(\rn ).\qedhere
\end{align*}
\end{proof}

We recall that the $BV$ space, the space of functions of bounded
variation, in fact represents equivalence classes of functions so
that, when a function in a class is changed on a set of
$\mathcal{L}^{N}$-measure zero, it remains in this class. The same
is true for sets of finite perimeter because, by definition, the
characteristic function $\charfn{E}$ of a set of finite perimeter,
$E$, is a function of bounded variation.
Thus, it follows that $E$ may be altered by a set of
$\mathcal{L}^{N}$-measure zero and still determine the same
essential boundary $\po^{m}E$. Throughout, we will choose a
preferred representative for \(E \) and thereby adopt the  following
convention.

\begin{definition}\label{convention}
$E:=\{ y:D(E,y)=1\} \cup \partial^{m}E  $.
\end{definition}

\begin{definition}\label{dmfield}
A vector field $\FF\in L^p(\Omega;\rn)$, $1\le p\le \infty$, is
called a {\it divergence-measure field}, written as $\FF\in
\DM^p(\Omega)$, if $\mu:=\div \FF$ is a (signed) Radon measure with
finite total variation on $\Omega$ in the sense of distributions.
Thus, for \(\varphi \in C^{\infty}_{c}(\Omega) \), we have
\[
\mu (\varphi ):= \div\FF(\varphi )=-\int_{\Omega}\FF \cdot \nabla
\varphi\, dy.
\]
The total variation of \(\mu \) is a positive measure which, for any
open set \(W\), is  defined as
\begin{align*}
\norm{\mu }(W):
&= \sup\{\mu(\varphi)\, : \, \norm{ \varphi
}_{0; \Omega }\le1  ,\,\varphi \in C^{\infty}_{c}(W)   \}\\
&= \sup\Big\{\int_{\Omega}\FF \cdot \nabla
 \varphi \,dy\,  : \, \norm{ \varphi
}_{0; \Omega }\le1, \, \varphi \in C^{\infty}_{c}(W) \Big\}.
\end{align*}
A vector field $\FF\in L^p_{loc}(\Omega)$ means that, for any
$K\Subset\Omega$, $\FF\in \DM^p(K)$.
\end{definition}

\begin{definition}\label{trace}
Let \( \FF\in \DM^p(\Omega) \), \( 1\le p\le\infty\).
For an arbitrary measurable  set \(E\Subset\Omega \), the trace of the
normal
component of \( \FF \) on \( \partial E\) is a functional defined by
\begin{equation}\label{tracedisplay}
(T\FF)_{_{\partial E}}(\varphi )=\int_{E}\nabla \varphi \cdot \FF\,
dy +\int_{E}\varphi \di\mu
\end{equation}
for all test functions \( \varphi \in C_{c}^{\infty }(\Omega ) \).
Clearly, \( (T\FF)_{_{\partial E}}\) is a distribution defined on
\(\Omega\). Note that this definition assumes only that the set
\(E\) is measurable. Later,  we will provide an alternative
definition  when \(E\) is a set of finite perimeter (see Theorem
{\rm \ref{above}}).
\end{definition}

\begin{proposition}\label{sptiynbdry} Let $E\Subset\Omega$ be an
open set. Then \( \spt((T\FF)_{_{\partial E}})\subset\partial E\).
That is, if \( \psi \) and \( \varphi  \) are test functions in \(
\mathcal D(\Omega ) \) with \(\psi =\varphi \) on \( \partial E\),
then \( (T\FF)_{_{\partial E}}(\psi)= (T\FF)_{_{\partial
E}}(\varphi) \).
\end{proposition}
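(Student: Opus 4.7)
The goal is to show that $(T\FF)_{_{\partial E}}(\eta)=0$ whenever $\eta \in C_c^{\infty}(\Omega)$ vanishes on $\partial E$; applying this to $\eta=\psi-\varphi$ gives the stated equality, and specializing to $\eta$ with support disjoint from $\partial E$ yields the support inclusion. First, I record two trivial cases. If $\eta \in C_{c}^{\infty}(E)$, then since $\mu=\div\FF$ in the distributional sense,
\[
\int_{E}\eta\,d\mu=\int_{\Omega}\eta\,d\mu=-\int_{\Omega}\nabla\eta\cdot\FF\,dy=-\int_{E}\nabla\eta\cdot\FF\,dy,
\]
so $(T\FF)_{_{\partial E}}(\eta)=0$. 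If $\eta \in C_{c}^{\infty}(\Omega\setminus\overline{E})$, both integrals in \eqref{tracedisplay} vanish trivially. A smooth partition of unity subordinate to the disjoint open cover $E\cup(\Omega\setminus\overline{E})$ of $\Omega\setminus\partial E$ handles any $\eta$ with support disjoint from $\partial E$, establishing $\spt((T\FF)_{_{\partial E}})\subset\partial E$.

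To extend to the stronger case where $\eta$ merely vanishes on $\partial E$, I would employ a smooth soft-threshold approximation. Pick $\Phi_{\varepsilon}\in C^{\infty}(\R)$ with $\Phi_{\varepsilon}(t)=0$ for $|t|\le\varepsilon$, $|\Phi_{\varepsilon}(t)-t|\le 2\varepsilon$, and $|\Phi_{\varepsilon}'|\le 1$, and set $\eta_{\varepsilon}:=\Phi_{\varepsilon}\circ\eta$. Since $\eta\equiv 0$ on $\partial E$ and $\eta$ is continuous, $\spt(\eta_{\varepsilon})\subset\{|\eta|\ge\varepsilon\}$ is compact and disjoint from $\partial E$, hence contained in the disjoint open union $E\cup(\Omega\setminus\overline{E})$. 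The first step then gives $(T\FF)_{_{\partial E}}(\eta_{\varepsilon})=0$ for every $\varepsilon>0$.

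It remains to pass $\varepsilon\to 0$. Uniform convergence $\eta_{\varepsilon}\to\eta$ together with $\|\mu\|(\Omega)<\infty$ yields $\int_{E}\eta_{\varepsilon}\,d\mu\to\int_{E}\eta\,d\mu$. For the gradient term, $\nabla\eta_{\varepsilon}=\Phi_{\varepsilon}'(\eta)\nabla\eta$ is dominated by $|\nabla\eta|$ and converges pointwise to $\chi_{\{\eta\neq 0\}}\nabla\eta$; dominated convergence (using $\FF\in L^{1}_{\loc}$ and $\nabla\eta$ bounded with compact support) gives
\[
\int_{E}\nabla\eta_{\varepsilon}\cdot\FF\,dy\;\longrightarrow\;\int_{E\cap\{\eta\neq 0\}}\nabla\eta\cdot\FF\,dy,
\]
which equals $\int_{E}\nabla\eta\cdot\FF\,dy$ by the classical fact that $\nabla\eta=0$ almost everywhere on the zero set of any $C^{1}$ function. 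Hence $(T\FF)_{_{\partial E}}(\eta)=\lim_{\varepsilon\to 0}(T\FF)_{_{\partial E}}(\eta_{\varepsilon})=0$.

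The main subtlety is precisely the upgrade from "vanishing in a neighborhood of $\partial E$" (the literal support statement) to "vanishing only on $\partial E$," which is genuinely stronger for distributions of positive order; the soft-threshold device, combined with the vanishing of $\nabla\eta$ on $\{\eta=0\}$, is the key ingredient that bridges the gap.
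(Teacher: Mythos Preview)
Your proof is correct and in fact goes further than the paper's.  For the support inclusion the paper argues by contradiction: assuming some $x_{0}\in \spt((T\FF)_{\partial E})\cap E$, it picks $\varphi\in C_c^\infty(U\cap E)$ with $(T\FF)_{\partial E}(\varphi)\ne 0$, mollifies $\FF$ to $\FF_\varepsilon$, uses $\int_E\div(\FF_\varepsilon\varphi)=0$ (since $\spt(\FF_\varepsilon\varphi)\Subset E$), and passes to the limit to reach a contradiction.  Your direct appeal to the distributional identity $\int_\Omega\eta\,d\mu=-\int_\Omega\nabla\eta\cdot\FF$ for $\eta\in C_c^\infty(E)$ achieves the same conclusion without any mollification, and you also dispose explicitly of the case $\eta\in C_c^\infty(\Omega\setminus\overline{E})$, which the paper leaves implicit.

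More to the point, you correctly flag that the second assertion in the statement---that $(T\FF)_{\partial E}(\psi)=(T\FF)_{\partial E}(\varphi)$ whenever $\psi=\varphi$ merely \emph{on} $\partial E$---is strictly stronger than the bare support inclusion, since the functional is a priori of order one.  The paper's proof establishes only the support inclusion and treats the ``That is'' as an equivalence; your soft-threshold argument, together with the fact that $\nabla\eta=0$ a.e.\ on $\{\eta=0\}$, actually closes this gap.

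One small imprecision: the conditions you list on $\Phi_\varepsilon$ do not by themselves force $\Phi_\varepsilon'(c)\to 1$ for fixed $c\ne 0$, which you need for the pointwise convergence $\nabla\eta_\varepsilon\to\chi_{\{\eta\ne0\}}\nabla\eta$.  Adding the natural requirement $\Phi_\varepsilon(t)=t$ for $|t|\ge 2\varepsilon$ (clearly what you intend) repairs this immediately.
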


\begin{proof}
If the support were not contained in \( \partial E\), there would be
a point \( x_{0}\notin\partial E\) with \( x_{0}\in\spt(
(T\FF)_{_{\partial E}})\cap E\). This implies that, for each open
set \( U \) containing \( x_{0} \), there exists a test function \(
\varphi \in C_{c}^{\infty }(U\cap E)\) such that \(
(T\FF)_{_{\partial
 E}}(\varphi )\ne 0 \). Choose \( U \) so that \(
U\subset\rn\setminus
\partial E\). Let \( \FF_{\varepsilon} \) denote the mollification of \( \FF \)
(see Lemma \ref{mollifier-properties}). Then, since
$\spt(\FF_\varepsilon\varphi)\Subset E$,
\begin{align*}
0&
=\int_{E}\div(\FF_\varepsilon
\varphi) \,dy =(T\FF_{\varepsilon})_{_{\partial E}}(\varphi )\\
&=\int_{E}\FF_\varepsilon \cdot\nabla \varphi
+\int_{E}\varphi \, \div \FF_\varepsilon\, dy\\
&\to\int_{E}\FF\cdot\nabla \varphi\,dy +\int_{E}\varphi\di\mu
 =(T\FF)_{_{\partial E}}(\varphi )\ne0,
 \end{align*}
where we used \(\partial E\cap \spt(\varphi)=\emptyset\) in the
limit. Thus, we arrive at our desired contradiction.
\end{proof}

\begin{definition}\label{def:capacity}
For \(1\le p\le N \), the {\bf \(p \)-capacity} of an arbitrary set
\(A\Subset\rn \) is defined as
\begin{equation}\label{CAP}
\Cap_{p}(A
):=\inf\Big\{\int_{\Omega}\abs{\nabla\varphi}^{p}\,dy\Big\},
\end{equation}
where the infimum is taken over all test functions \(\varphi\in
C_{c}^{\infty }(\Omega)\) that are identically one in a neighborhood
of \(A \).  It is well known (cf. \cite{FZ}) that \(\Cap_p(A )=0 \)
for \(1<p<N\) implies that \(\H^{N-p+\varepsilon }(A) =0 \) for each
\(\varepsilon>0\) and that, conversely, if \(\H^{N-p}(A )<\infty \),
then \(\Cap_{p}(A )=0 \). In view of Remark {\rm \ref{precise}} and
Lemma {\rm \ref{capacity}}, it is easy to verify that the class of
competing functions in \eqref{CAP} can be enlarged to the Sobolev
space \(W^{1,p}(\Omega)\).
\end{definition}

\begin{remark}\label{fleming}
  The case of \(p=1\) requires special consideration. In {\rm 1957}, Fleming
  conjectured that \(\Cap_{1}(A )=0 \) if and only if \(\H^{N-1}(A )=0
  \). This was settled in the affirmative by Gustin \cite{Gustin} who proved the
  boxing inequality,  from which Fleming's conjecture
  easily follows (cf. \cite{Fleming}).
\end{remark}
The next result is basic (cf. \cite{Ziemer2,zel,zexl}).

\begin{proposition}\label{capProp}
Let \(\gamma_{p}\) be the \(p\)-capacity defined as in the previous
definition. Then
\begin{enumerate}[\rm (i)]
\item If \(E_{k} \subset \rn \) is a sequence of arbitrary sets, then
\[
\gamma_{p} (\liminf _{k \to \infty }E_{k})\le \liminf_{k\to \infty }
\gamma_{p} (E_{k});
\]
\item If \(E_{1}\subset E_{2}\subset \cdots\) are arbitrary sets, then
\[
\gamma_{p} \left(\medcup_{k=1}^{\infty }E_{k}\right)=\lim_{k\to
\infty } \gamma_{p} (E_{k});
\]
\item If \(K_{1}\supset K_{2}\supset \cdots\) are compact sets, then
\[
\gamma_{p} \left(\medcap_{k=1}^{\infty }K_{k}\right)=\lim_{k\to
\infty } \gamma_{p} (K_{k});
\]
\item
If \(\{E_{k} \}\) is a sequence of Borel sets, then
\[
\gamma_{p}\left(\medcup_{k=1}^{\infty }E_{k}\right)\le\lim_{k\to
\infty } \gamma_{p} (E_{k});
\]

\item If \(A\subset \rn \) is a Suslin set, then
\[
\sup\{\gamma_{p}(K): K^{\text{compact}}\subset A\}=\inf\{\gamma_{p}
(U): U^{\text{open}}\supset A \}.
\]
\end{enumerate}
Any set function, \(\gamma \), satisfying conditions {\rm (i)--(iv)}
is called a {\bf true capacity} in the sense of Choquet and a set
\(A\) satisfying condition {\rm (v)} is said to be {\bf
  $\gamma$-capacitable}.
\end{proposition}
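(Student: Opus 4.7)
The plan is to leverage the variational definition \eqref{CAP} together with standard potential-theoretic machinery, establishing the five properties in an order that respects their dependencies. A preliminary observation is monotonicity: if $A \subset B$, then $\gamma_p(A) \le \gamma_p(B)$, since any admissible $\varphi$ for $B$ is admissible for $A$. I would also extend, as Definition \ref{def:capacity} indicates, the class of competitors from $C_c^\infty(\Omega)$ to $W^{1,p}(\Omega)$ functions that equal $1$ quasi-everywhere on $A$, using Remark \ref{precise}.

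I would begin with (iii) since it is self-contained. Given $K := \bigcap_k K_k$ with $K_k$ compact and decreasing, fix $\varepsilon > 0$ and choose $\varphi$ admissible for $K$ with $\int |\nabla \varphi|^p \le \gamma_p(K) + \varepsilon$. Since $\varphi \equiv 1$ on an open neighborhood $U$ of $K$, elementary compactness (finite intersection property) forces $K_k \subset U$ for all large $k$, so $\varphi$ is admissible for such $K_k$ and $\lim_k \gamma_p(K_k) \le \gamma_p(K) + \varepsilon$. Next I would record the basic finite subadditivity: if $\varphi_1, \varphi_2$ are admissible for $E_1, E_2$, then $\varphi := \max(\varphi_1, \varphi_2) \in W^{1,p}$ is admissible for $E_1 \cup E_2$ and $|\nabla \varphi|^p \le |\nabla \varphi_1|^p + |\nabla \varphi_2|^p$ a.e., yielding $\gamma_p(E_1 \cup E_2) \le \gamma_p(E_1) + \gamma_p(E_2)$.

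Part (ii) is the crux. Monotonicity gives $\gamma_p(\bigcup E_k) \ge \lim_k \gamma_p(E_k) =: \alpha$. For the reverse (assuming $\alpha < \infty$), choose nearly extremal $\varphi_k \in W^{1,p}$ admissible for $E_k$ with $\int |\nabla \varphi_k|^p \le \gamma_p(E_k) + 1/k$. For $1 < p \le N$, weak $W^{1,p}$-compactness provides a weak limit $\varphi_0$; Mazur's lemma gives a convex-combination sequence converging strongly in $W^{1,p}$ and, after passing to a subsequence, quasi-everywhere. Since each combination is $\ge 1$ quasi-everywhere on $E_k$ (using the increasing structure), the limit is $\ge 1$ quasi-everywhere on $\bigcup_k E_k$; weak lower semicontinuity of the norm yields $\int |\nabla \varphi_0|^p \le \alpha$. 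For $p = 1$, BV compactness combined with Fleming's equivalence between $\Cap_1$-null sets and $\mathcal{H}^{N-1}$-null sets (Remark \ref{fleming}) supplies the substitute. From (ii), part (iv) follows by applying continuity from below to the increasing unions $E_n' := \bigcup_{k \le n} E_k$ together with finite subadditivity iterated, and part (i) follows by applying (ii) to the increasing sequence $F_k := \bigcap_{j \ge k} E_j$, noting monotonicity gives $\gamma_p(F_k) \le \inf_{j \ge k} \gamma_p(E_j)$.

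Finally, (v) is Choquet's capacitability theorem: properties (i)–(iv), together with monotonicity, certify that $\gamma_p$ is a true capacity in Choquet's sense, and the abstract capacitability theorem then delivers inner and outer regularity for every Suslin set $A$, by writing $A$ as the union of a Suslin scheme of compact sets and extracting approximants using (iii) and (iv). I expect the main obstacle to be step (ii): ensuring the weak or strong limit of nearly extremal functions is actually admissible for the full union $\bigcup_k E_k$ requires the quasi-continuity theory for $W^{1,p}$ (and for BV when $p=1$) and careful control of pointwise limits of the Mazur convex combinations, since admissibility is a quasi-everywhere — not merely a.e. — condition.
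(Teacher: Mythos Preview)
The paper does not actually prove Proposition~\ref{capProp}; it is stated as a basic result with references to \cite{Ziemer2,zel,zexl}, so there is no paper proof to compare against. Your outline is the standard route taken in those references: (iii) via compactness of the $K_k$, (ii) via weak compactness in $W^{1,p}$ plus Mazur's lemma to produce an admissible limit for the union, (i) reduced to (ii) through the increasing sequence $F_k=\bigcap_{j\ge k}E_j$, and (v) as a direct appeal to Choquet's capacitability theorem once (ii) and (iii) are known. Two small remarks. First, statement (iv) as printed in the paper (with $\lim$) appears to be a misprint for $\sum$, i.e.\ countable subadditivity; your derivation via iterated finite subadditivity and (ii) is the correct reading. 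Second, in your argument for (ii) the delicate point you flag---that the Mazur convex combinations are $\ge 1$ quasi-everywhere on each fixed $E_k$---works precisely because the combinations use only indices $j\ge k$ and $E_k\subset E_j$ for such $j$; the quasi-everywhere inequality then passes to the strong $W^{1,p}$ limit along a subsequence. For $p=1$ the standard substitute is indeed the BV framework together with Remark~\ref{fleming}, though the details are more involved than your one-line sketch suggests.
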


\begin{remark}\label{capsobolev}
One of the main reasons for studying the
  capacity is its important role in the development of
  Sobolev theory. It was first shown in \cite{FZ} that every function
  \(u \in W^{1,p}(\Omega)\) has a Lebesgue point \(\gamma_{p}\)-a.e. In
  particular, in view of Remark {\rm \ref{fleming}}, this implies that a
  function \(u\in W^{1,1}(\Omega)\) has a Lebesgue point everywhere
  except for an exceptional set \(E\) with \(\H^{N-1} (E)=0\). In case
  \(u\in BV(\Omega)\), we have a slightly weaker statement than the
  corresponding one for \(u\in W^{1,p}(\Omega) \):
\[
\lim_{r\to0}\intave{B(x,r)}u(y) \di  y=u(x)\qquad\text{for
\(\H^{N-1} \)-a.e. \(x\in \Omega\).}
\]

It turns out that the Sobolev space is the {\it perfect functional
completion} of the space \(C^{\infty}_{c}(\Omega) \) relative to the
\(p\)-capacity.  See \cite{AS1} where the concept of perfect
functional completion was initiated and developed.
\end{remark}

\begin{lemma}\label{capacity}
If \( \FF\in \DM^{p}_{loc}(\rn)\), \(1\le p\le\infty\), then \(
\norm{\div \FF} (B)=0 \) whenever \(B\Subset\Omega\) is a Borel set
with \( \Cap_{q}(B)=0 \), \(q:=p/(p-1) \). In particular, when \(
p=\infty \) (i.e. \(q=1\)), then \( \norm{\div \FF}(B)=0 \).
\end{lemma}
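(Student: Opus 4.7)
The plan is to prove $\|\div\FF\|(B) = 0$ in three stages: first reduce to compact sets by inner regularity, then establish $\mu(K) = 0$ by testing the identity $\div\FF = \mu$ against capacity-minimizing cutoffs, and finally upgrade from $\mu(K) = 0$ to $|\mu|(K) = 0$ via the Hahn-Jordan decomposition. The $p = \infty$ conclusion then follows immediately from Fleming's equivalence $\Cap_1(A) = 0 \Leftrightarrow \mathcal{H}^{N-1}(A) = 0$ recorded in Remark \ref{fleming}.

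Since $|\mu|$ is a locally finite Radon measure, $|\mu|(B) = \sup\{|\mu|(K) : K \subset B \text{ compact}\}$, and by monotonicity $\Cap_q(K) \le \Cap_q(B) = 0$. So it suffices to prove $|\mu|(K) = 0$ for each compact $K \Subset \Omega$ with $\Cap_q(K) = 0$.

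For such a $K$, I would pick decreasing open sets $V_n \subset U_n \Subset \Omega$ with $V_n, U_n \searrow K$. By standard capacity theory (the relative $q$-capacity of $K$ inside $U_n$ vanishes whenever $\Cap_q(K) = 0$, through a localization/truncation argument combining a capacity-small test function with a cutoff supported in $U_n$, together with the enlargement of the admissible class to $W^{1,q}$ noted in the definition), one obtains $\varphi_n \in C_c^\infty(U_n)$ with $0 \le \varphi_n \le 1$, $\varphi_n \equiv 1$ on $V_n$, and $\|\nabla\varphi_n\|_{q;U_n} \to 0$. Testing the distributional identity $\div\FF = \mu$ against $\varphi_n$ and applying H\"older,
\begin{equation*}
\Bigl|\int_\Omega \varphi_n\,d\mu\Bigr| = \Bigl|\int_\Omega \FF\cdot\nabla\varphi_n\,dy\Bigr| \le \|\FF\|_{p;U_1}\,\|\nabla\varphi_n\|_{q;U_n} \longrightarrow 0,
\end{equation*}
while on the other hand $\int\varphi_n\,d\mu = \mu(V_n) + \int_{U_n\setminus V_n}\varphi_n\,d\mu$ with the tail controlled by $|\mu|(U_n\setminus V_n) \le |\mu|(U_n) - |\mu|(V_n) \to 0$ (both sets shrink to $K$ and $|\mu|$ is finite on $U_1$), and downward continuity of $|\mu|$ gives $\mu(V_n) \to \mu(K)$. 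Combining yields $\mu(K) = 0$.

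To promote this to $|\mu|(K) = 0$, I would use the Hahn decomposition $\Omega = P \sqcup N$ associated to $\mu = \mu^+ - \mu^-$. Given $\varepsilon > 0$, inner regularity of the finite positive measure $\mu^+$ applied to the Borel set $K \cap P$ produces a compact $K_1 \subset K \cap P$ with $\mu^+(K\cap P \setminus K_1) < \varepsilon$; since $K_1 \subset P$ we have $\mu^+(K_1) = \mu(K_1)$, and since $\Cap_q(K_1) \le \Cap_q(K) = 0$, the previous step gives $\mu(K_1) = 0$, so $\mu^+(K) = \mu^+(K\cap P) < \varepsilon$. Symmetrically $\mu^-(K) < \varepsilon$, and letting $\varepsilon \to 0$ yields $|\mu|(K) = 0$. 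The main technical obstacle is the localization in the second paragraph: a naive multiplication of a capacity-small test function $\tilde\varphi_n$ by a cutoff $\eta_n$ supported in a small $U_n$ introduces an uncontrolled gradient contribution $\tilde\varphi_n\nabla\eta_n$, so the argument really requires that $\tilde\varphi_n$ first be concentrated near $K$ (for instance by a preliminary level-set truncation) before the cutoff is applied, which is exactly the content of the $q$-capacity hypothesis.
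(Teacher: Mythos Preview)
Your proof is correct and shares the paper's core strategy---test the distributional identity \(\div\FF=\mu\) against capacity-minimizing cutoffs equal to \(1\) near \(K\)---but the two proofs diverge in how the tail term \(\int_{\Omega\setminus K}\varphi_n\,d\mu\) is handled. You force the supports \(U_n\) to shrink to \(K\) and control the tail by \(|\mu|(U_n\setminus V_n)\to 0\) via outer regularity; the paper instead (for \(q>1\)) constructs \(\varphi_k\) converging to zero pointwise \(\gamma_q\)-a.e.\ by passing to convex combinations through Mazur's lemma, and for \(q=1\) builds the \(\varphi_k\) as mollifications of \(\chi_{W_k}\) with \(W_k\searrow K\), which is exactly your route. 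Your version is more elementary in that it avoids Mazur's lemma altogether, at the cost of the localization you flag at the end; that localization is genuinely available for \(1\le q<N\) by the cutoff argument you sketch (the dangerous term \(\tilde\varphi_n\nabla\eta\) has \(\|\tilde\varphi_n\nabla\eta\|_q\le \|\nabla\eta\|_\infty\|\tilde\varphi_n\|_{q;\,\spt\nabla\eta}\to 0\) since Sobolev gives \(\|\tilde\varphi_n\|_{q^*}\to 0\) and \(\spt\nabla\eta\) is compact), while for \(q>N\) only \(K=\emptyset\) has zero capacity and the statement is vacuous.

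Your explicit Hahn--Jordan step promoting \(\mu(K)=0\) (for every compact \(K\) of zero \(q\)-capacity) to \(|\mu|(K)=0\) is a genuine addition: the paper asserts at the outset that ``it suffices to show that \(\mu(K)=0\)'' without justifying why this yields the total-variation conclusion, and your argument via inner regularity of \(\mu^\pm\) on compact subsets of the Hahn sets is the clean way to fill that gap.
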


\begin{proof}
Because of the inner regularity of \(\gamma_{q}\) and condition (v)
of Proposition \ref{capProp}, it suffices to show that $\mu(K)=0$
for any compact set $K \subset B$, where $\mu:=\div\FF$. Since
\(\gamma_{q}(K)=0\), then there exists a sequence of test functions
\(\varphi_{k}\in C^{\infty}_{c}(\Omega)\) such that
\begin{enumerate}[(i)]
\item $\varphi_{k}=1 \quad\text{on \(K\)}$;\\
\item $\norm{\nabla \varphi_{k}}_{q}\to0$;\\
\item $\varphi _{k} (y)\to0$ for all $y\in \Omega$  except those in some
set \(A \subset\Omega\) with \(\gamma_{q}(A)=0\).
\end{enumerate}
To see that such a sequence \(\varphi _{k}\) exists, we proceed as
follows:\\

{\bf Case 1: $q>1$.} Since \(\gamma_{q}(K)=0\), we may choose
\(u_{i}\in C_{c}^{\infty}(\Omega)\) with \(u_{i}=1\) near \(K\) and
\(\norm{\nabla u_{i}}_{q}\to0\). Then Sobolev's inequality implies
\[
\norm{u_{i}}_{r}\le\; C \norm{\nabla
  u_{i}}_{q}\qquad\text{for \(r=Nq/N-q\)}.
\]
Thus, it follows that, for a subsequence, \(u_{i}(x)\to0\) for a.e.
\(x\). In fact, an application of Mazur's theorem shows that the
space of convex combinations of  \(W^{1,p}(\Omega\) is strongly
closed and therefore, for a suitable subsequence, the sequence of
finite convex combinations of the \(u_{i}\),
 \,\text{say}
\(v_{k}:= \sum a_{i} u_{i} \) with \(\sum_{i=1}^{k_{i}}a_{i}=1\), it
follows that \(v_{k}(x)\to0\) for all \(x\) except those in an
exceptional set \(A\) where \(\gamma_{q} (A)=0\) (see \cite{FZ}, p.
156). Taking \(\varphi_{i}:= v_{i}\) for this subsequence
establishes a sequence
 satisfying conditions (i)--(iii) above.

{\bf Case 2: \(q=1\) (\(p=\infty \))}. Then the argument is modified
by considering the following variational problem:
\begin{align}\label{varprob}
\Gamma_{k,q}(K):= \inf\Big\{\int_{U_{k} }|\nabla u_{k} |^{q}\,dy\,
:\, u_{k} \in W^{1,q}(\rn ),\, u_{k} =1 \;\text{on}\;
\overline{U_{k}},\, \spt (u_{k}) \subset W_{k}\Big\}.
\end{align}

Let \(W_{k}\supset \overline{U_{k}} \supset U_{k }\cdots\supset K \)
with $U_k$ open and
\begin{equation}\label{capto0}
\gamma_{p}(\overline{W_{k}})=\gamma_{p}(W_{k})<\varepsilon_{k}\to0.
\end{equation}
We will also assume \(W_{k} \) to be nested: \(W_{1}\supset
W_{2}\cdots\)\,. For each \(k\), let \(\overline{W_{k}}\) be a
smoothly bounded set to find that \(\medcap \overline{W_{k}}=K.\)
Consequently, we may take \(\varphi_{k}\) to be a suitably small
mollification of \(\charfn{W_{k}}\) to see that conditions \({\rm
(i)}\)--\({\rm (iii)}\) above are satisfied.

To prove our lemma, it suffices to show that, if $K$ is any compact
set satisfying $\Cap_{q}(K)=0$, then $\mu(K)=0$. Since
\begin{enumerate} [(a)]
\item  \(\varphi _{k} =1\) on \(K\);
\item\(\varphi_{k}\to0\)\quad \(\gamma_{q}\)-a.e.;
\item \(\norm{
\nabla \varphi_{k}}_{q;\rn }\to 0\);
\item
\(\norm{\mu}(W)=\sup\{ \int_{W}|\FF\cdot  \nabla \varphi|\,
dy:\varphi \in C^{\infty}_{c}(W),\abs{\varphi} \le1\}\) \,\, for any
open set \(W \Subset\Omega\),
\end{enumerate}
it follows that
 \begin{align*}
\lim_{k\to \infty} \big(\int_{K} \varphi _{k} \di
\mu+\int_{U_{k}\setminus
       K}\varphi _{k}\,\di\mu\big)
   &=\mu (K)+\lim_{k \to \infty }\int_{U_{k}\setminus
       K}\varphi _{k}\,\di\mu  \\
   &\le\mu (K)+ \lim_{k \to \infty }\int_{U_{k}\setminus
       K}\varphi _{k}\,\di \norm{\mu }  \\
   &\leq\lim_{k\to \infty}\left|\int_{\Omega}\varphi_{k}\, \div
\FF\right
   |\le\lim_{k\to \infty}\int_{\Omega}|\FF\cdot\nabla \varphi_{k}| dx\\
 &\le \norm{\FF}_{p} \lim_{k\to \infty} \norm{\nabla \varphi_{k}}_{q}
   \le  \,\norm{\FF}_{p}\cdot 0,
  \end{align*}
thus  obtaining $\mu(K)=0$, as desired.
\end{proof}

\begin{corollary}\label{muis0}
If \(\FF\in \mathcal{DM}^{p}_{loc}(\Omega)\) for $1<p\le N$ with
\(\div \FF=:\mu \) and if \( \H^{N-q}(A)<\infty\) for
$A\Subset\Omega$ with $q=p/(p-1)$, then \(\Cap_{q}(A)=0 \) and hence
\(\norm{\mu}(A)=0\).
\end{corollary}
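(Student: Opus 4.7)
The proof is essentially a two-step chaining of results already available in the excerpt. The plan is to first convert the Hausdorff-measure hypothesis into a capacity-vanishing statement, and then invoke Lemma \ref{capacity} to pass from capacity zero to $\|\mu\|$-measure zero.

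More precisely, first I would observe that the hypothesis $1 < p \le N$ implies the conjugate exponent $q = p/(p-1)$ satisfies $1 \le q < \infty$. The fact cited in Definition \ref{def:capacity}, attributed to \cite{FZ}, is precisely the implication that $\mathcal{H}^{N-q}(A) < \infty$ forces $\text{Cap}_q(A) = 0$ (with the case $q = 1$ covered by Fleming's conjecture as recalled in Remark \ref{fleming}, though the hypothesis $p \le N$ together with $p > 1$ puts $q$ in the range $[1, \infty)$; the marginal case $q = 1$ occurs when $p = N$ and is fine by Gustin's boxing inequality). This gives $\text{Cap}_q(A) = 0$ directly.

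Next, I would apply Lemma \ref{capacity} to conclude $\|\mu\|(A) = 0$. A small technical subtlety is that Lemma \ref{capacity} is stated for Borel sets, whereas $A$ in the statement is only required to be compactly contained in $\Omega$. To handle this, I would use the Borel regularity of $\mathcal{H}^{N-q}$ to produce a Borel set $B$ with $A \subset B \Subset \Omega$ and $\mathcal{H}^{N-q}(B) = \mathcal{H}^{N-q}(A) < \infty$. Then the argument above yields $\text{Cap}_q(B) = 0$, and Lemma \ref{capacity} gives $\|\mu\|(B) = 0$; by monotonicity, $\|\mu\|(A) \le \|\mu\|(B) = 0$. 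The conclusion $\text{Cap}_q(A) = 0$ likewise follows by monotonicity of $\gamma_q$ (or directly from the cited $\mathcal{H}^{N-q}$-to-capacity implication applied to $A$ itself, which does not require $A$ to be Borel).

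I do not foresee a genuine obstacle: both ingredients are already quoted, and the only point requiring care is the Borel-set subtlety, which is routine. The harder work has been done in Lemma \ref{capacity} itself — in particular, the delicate construction of the test sequence $\varphi_k$ in the case $q = 1$, which is where the real content lies. The corollary is simply the packaging of that lemma in a form convenient for applications where one only has a Hausdorff-dimension hypothesis on the exceptional set.
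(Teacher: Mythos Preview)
Your approach is correct and matches the paper's treatment: the corollary is stated without proof, as it follows immediately by combining the fact cited in Definition~\ref{def:capacity} (that \(\H^{N-q}(A)<\infty\) implies \(\Cap_q(A)=0\)) with Lemma~\ref{capacity}. One small slip in your parenthetical remark: \(q=1\) corresponds to \(p=\infty\), not \(p=N\); for \(1<p\le N\) one has \(q=p/(p-1)\ge N/(N-1)>1\), so the \(q=1\) case does not actually arise here and Remark~\ref{fleming} is not needed.
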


\begin{remark}
If \(\FF\in \mathcal{DM}^{p}(\rn)\), \(1\le p\le \infty \), with
\(\div \FF=\mu\). Then, in view of the fact that \(\varphi \) is
defined \(\H^{N-q}\)-a.e. and therefore \(\mu \)-a.e., with
\(\varphi \in  W^{1,q}(\rn )\), it follows that the integral
\[
\int_{\rn }\varphi \di  \mu\,
\]
is defined and is meaningful.
\end{remark}

\begin{example}[Chen-Frid \cite{CF1}] Denote \(U\) the open unit square in
\(\rii\) that has one of its sides contained in the line segment
\[
L:= \{y=(y_{1},y_{2}):y_{1}=y_{2} \}\cap \partial U.
\]
Define a field \(\FF:\rii \setminus L\to \rii \) by
$$
\FF(y)=
\FF(y_1,y_2)=(\sin\big(\frac{1}{y_{1}-y_{2}}\big),-\sin\big(\frac{1}
  {y_{1}-y_{2}}\big)).
$$
\end{example}
Clearly, \(\FF\in L^{\infty }(\rii)\), and a simple calculation
reveals that \(\div \FF=0\) in \(\rn \setminus L\). Then \(\FF\)
belongs to \(\mathcal{DM}^{\infty }(\rii \)); but the field is
singular on one side, \(L\), of \(\partial U\) and therefore,
\(\FF\) is undefined on \(\partial U\); it has no trace on
\(\partial U\) in the classical sense. Note also that the points of
\(L\) are all essential singularities of \(\FF\) because the
following limit does not exist:
\[
\lim_{y\to x}\FF(y)\qquad\text{for $y\in \R^2 \setminus L$, \(x\in
L\)},
\]
and therefore the normal trace of \(\FF\) on \(\partial U\) is given
by
\[
\lim_{t\to 0} \int_{\partial U_{t}} \FF(y)\cdot \nnu(y)\di \H^{1}
(y)= \lim_{t\to0}\int_{U_{t}} \div \FF \di y =\lim_{t\to0} 0=0,
\]
where \(U_{t}:= \{y\in U :\dist(y,\partial U)>t) \}\). Thus, we have
shown the following:

\begin{enumerate}[(i)]
\item \(\FF\) is an element of \(\mathcal{DM}^{\infty }(\R^2) \), while
each component function of $\FF$ is not in \(BV(\R^2)\);
\item \(\FF\) has an essential singularity at each point of \(L\) and
      therefore cannot be defined on \(L\);
\item As we will see later, Theorem \ref{above}, \(\FF\) has a
      weak normal trace on \(L\) which is sufficient for the
      Gauss-Green theorem to hold.
\end{enumerate}

For more properties of the spaces $\DM^p$ of divergence-measure
vector fields, see Chen-Frid \cite{CF1,CF-CMP}.

The following theorem provides a product rule for the case
$p=\infty$. For the sake of completeness, we will include its proof,
which is slightly different from that given in \cite{CF1}. We denote
by \(\{g_{k}\}\) a sequence of \(C^{\infty}_{c}(\rn )\)
mollifications with the property that \(g_{k} \to g \) in
\(L^{1}(\rn )\) and such that \(\norm{\nabla g_{k}}\to \norm{\nabla
g}\) (cf. \cite{Ziemer2}, p.500).

\begin{theorem}[Chen-Frid \cite{CF1}]\label{productrule}
Let $\FF \in \mathcal{DM}^\infty(\rn)$ and \(g\in BV(\rn )\).
Then
\begin{equation}\label{prod}
{\div} (g \FF) = g^{*}\,{\div}\FF + \overline{F \cdot \nabla
g},
\end{equation}
where $\overline{F \cdot \nabla g}$ denotes the weak\(^{\,*}\) limit
of the measures $\FF \cdot\nabla g_k$ and \(g^{*}\) denotes the
limit of the mollifiers of \(g\) (cf. \cite{Ziemer2}).
\end{theorem}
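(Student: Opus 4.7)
My plan is to prove \eqref{prod} by mollifying $g$ and passing to the limit term-by-term in the classical Leibniz rule, exploiting the fact that $\norm{\div\FF}$ does not charge $\H^{N-1}$-null sets while the precise representative $g^*$ is defined $\H^{N-1}$-almost everywhere. Let $g_k := g \ast \rho_k \in C^\infty(\rn)$ be mollifications chosen (as described just before the theorem) so that $g_k \to g$ in $L^1_{\mathrm{loc}}(\rn)$ and $\norm{\nabla g_k}_1 \to \norm{\nabla g}(\rn)$. Since each $g_k$ is smooth, the classical product rule yields, in the sense of distributions,
\begin{equation}\label{Leibk}
\div(g_k \FF) = g_k \, \div\FF + \FF \cdot \nabla g_k.
\end{equation}

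Next, I would pass to the limit in each of the three terms of \eqref{Leibk}. On the left-hand side, $\FF \in L^\infty$ together with $g_k \to g$ in $L^1_{\mathrm{loc}}$ gives $g_k \FF \to g\FF$ in $L^1_{\mathrm{loc}}(\rn;\rn)$, hence $\div(g_k \FF) \to \div(g\FF)$ in $\mathcal{D}'(\rn)$. For the last term on the right, the bound $\norm{\FF \cdot \nabla g_k}(\rn) \le \norm{\FF}_\infty \norm{\nabla g_k}_1$ is uniform, so Banach--Alaoglu (cf. Corollary \ref{unbMeasures}) extracts a subsequence converging weak-$*$ in $\M(\rn)$ to a Radon measure, which is by definition $\overline{\FF \cdot \nabla g}$. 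For the middle term, my goal is to establish
$$
g_k \, \div\FF \,\stackrel{*}{\rightharpoonup}\, g^* \, \div\FF \qquad \text{in } \M(\rn).
$$
Lemma \ref{capacity} with $p=\infty$ (hence $q=1$) together with Remark \ref{fleming} gives $\norm{\div\FF} \ll \H^{N-1}$, while Remark \ref{precise} supplies $g_k \to g^*$ outside an $\H^{N-1}$-null exceptional set. Consequently $g_k \to g^*$ pointwise $\norm{\div\FF}$-a.e., and dominated convergence applied to each pairing with $\varphi \in C_c(\rn)$ yields the desired weak-$*$ limit. Combining the three limits along a common subsequence in \eqref{Leibk} produces \eqref{prod}, and uniqueness of the other three terms a posteriori forces $\overline{\FF\cdot\nabla g}$ to be independent of the extracted subsequence.

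The main obstacle is supplying the dominating function required in the dominated convergence step for $g_k \, \div\FF$: this is immediate when $g \in L^\infty$, but for a general $g \in BV(\rn)$ the mollifications $g_k$ need not be uniformly bounded. I would handle this by truncation, applying the preceding argument first to the bounded $BV$ functions $g^M := \max(-M,\min(g,M))$, observing that $(g^M)^* \to g^*$ pointwise at the $\H^{N-1}$-a.e. Lebesgue-value points of $g$ and that $\FF \cdot \nabla g^M$ converges weak-$*$ to $\overline{\FF \cdot \nabla g}$ as $M \to \infty$, and then letting $M \to \infty$ to transfer \eqref{prod} from $g^M$ to $g$.
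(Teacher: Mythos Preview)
Your approach is essentially the same as the paper's: mollify $g$, invoke the Leibniz rule, and pass to the limit term by term. The only difference is that the paper also mollifies $\FF$ (writing $\div(g_k\FF_\varepsilon)=g_k\,\div\FF_\varepsilon+\FF_\varepsilon\cdot\nabla g_k$ and first sending $\varepsilon\to0$) in order to reach the identity $\FF\cdot\nabla g_k=\div(g_k\FF)-g_k\,\div\FF$, whereas you obtain this directly from the distributional definition; your route is slightly more economical, and you are in fact more careful than the paper in flagging the dominated-convergence issue for $g_k\,\div\FF\rightharpoonup^* g^*\,\div\FF$, which the paper simply asserts.
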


\begin{proof}
Let \(\FF_{\varepsilon}\) be the mollification of \(\FF\) and set
$\mu:=\div \FF$. Since \(\FF_{\varepsilon}\) are smooth, the
classical product rule yields
\begin{equation}\label{eq:1}
\div (g_k \FF_{\varepsilon})
=g_{k}\,\div \FF_{\varepsilon }+\FF_{\varepsilon }\cdot \nabla
g_{k}.
\end{equation}
First, we note that \(\div \FF_{\varepsilon }=(\div
\FF)_{\varepsilon }=\mu _{\varepsilon }\;
\stackrel{*}{\rightharpoonup} \, \mu \) in $\mathcal{M}(\rn)$ as
$\varepsilon\to 0$. Since \(g_k \FF_{\varepsilon } \;{\to}\;g_k \FF
\) in \(L^{1}_{loc} (\rn) \) as $\varepsilon\to 0$, we obtain from
\eqref{eq:1} that, in the sense of distributions,
\begin{equation}\label{muchodolor}
\FF \cdot \nabla g_k= \div (g_k \FF) - \mu(g_k).
\end{equation}
Owing to the fact that \(\FF\in L^{\infty} \), we see that
\(\FF\cdot \nabla g_{k}\) is a bounded sequence in \(L^{1} (\rn) \)
and hence there is a subsequence such that \( \FF\cdot \nabla
g_{k}\) converges weak\(^{\;*} \) to some measure, denoted by
$\overline{\FF \cdot \nabla g}$. Letting $k \to \infty$ in
(\ref{muchodolor}) yields
\begin{equation}
\overline{\FF \cdot \nabla g}= \div (g \FF) - \mu(g^{*}).
\end{equation}
\qedhere
\end{proof}

The next result, Federer's coarea formula, will be of critical
importance to us in the sequel.

\begin{theorem}[\cite{Fe}, Theorem 3.1]\label{coarea}
Suppose \(X\) and \(Y\) are Riemannian manifolds of dimension \(N\)
and \(k\) respectively, with \(N\ge k\). If $f\colon X \to Y$ is a
Lipschitz map, then
\begin{equation} \label{caformula}
\int_{X}g(y)Jf(y)\;d\H^{N}(y)=\int_{\R^{k}}\Big\{\int_{f^{-1}(x)}g(y)d\H^{N-k}
(y)\Big\}\, d\H^{k}(x)
\end{equation}
whenever $g\colon X\to\R$ is $\H^{N}$-integrable. Here, $Jf(y)$
denotes the \(k\)-dimensional Jacobian of \(f\) at \(y\), namely,
the norm of the differential of \(f\) at \(y\), \(df(y)\).
Alternatively, it is the square root of the sum of the squares of
the determinants  of the $k\times k$ minors of the differential of
$f$ at $y$.
\end{theorem}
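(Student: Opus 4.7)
The plan is to reduce to the Euclidean case, then handle the critical set and the regular set separately, and finally invoke the linear formula after a Lusin-type covering.

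First, I would localize: using bi-Lipschitz coordinate charts on $X$ and $Y$, it suffices to prove \eqref{caformula} when $X$ is a bounded Borel subset of $\rn$, $Y=\R^k$, and $f\colon X\to\R^k$ is Lipschitz, because Riemannian volume and Hausdorff measure agree up to a bounded Jacobian factor on each chart and both sides of \eqref{caformula} behave additively under countable disjoint decompositions of $X$. By the usual monotone class / simple-function approximation, it further suffices to establish the identity for $g=\charfn{E}$ with $E\subset X$ Borel; i.e.
\begin{equation*}
\int_E Jf(y)\,d\H^N(y)=\int_{\R^k}\H^{N-k}\bigl(E\cap f^{-1}(x)\bigr)\,d\H^k(x).
\end{equation*}
A preliminary measurability check (using separability and the Lipschitz bound, which makes $x\mapsto \H^{N-k}(E\cap f^{-1}(x))$ a Borel function via approximation by coverings) ensures that the right-hand side is well defined.

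Next I would split $X=C\cup R$, where $C:=\{y\in X:Jf(y)=0\}$ is the critical set and $R:=X\setminus C$ is the regular set; by Rademacher's theorem $f$ is differentiable $\H^N$-a.e., so this decomposition is valid up to an $\H^N$-null set. For the \textbf{critical part}, the left side vanishes, so I must show that $\H^{N-k}(C\cap f^{-1}(x))=0$ for $\H^k$-a.e.\ $x$. The key step is an Eilenberg-type inequality: for any Lipschitz $f$ and any Borel $A\subset\rn$,
\begin{equation*}
\int_{\R^k}^{\!*}\H^{N-k}\bigl(A\cap f^{-1}(x)\bigr)\,d\H^k(x)\le c_{N,k}(\Lip f)^k\,\H^N(A),
\end{equation*}
combined with the Lipschitz Sard lemma that $\H^k(f(C))=0$, which together force the slice-measure of $C$ to vanish almost everywhere in $\R^k$.

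For the \textbf{regular part}, I would use the quantitative implicit function theorem for Lipschitz maps: for every $\eta>0$, $R$ admits a countable Borel decomposition $R=\bigsqcup_j R_j$ such that, on each $R_j$, there is an invertible linear surjection $L_j\colon\rn\to\R^k$ with
\begin{equation*}
\bigl|f(y)-f(y')-L_j(y-y')\bigr|\le\eta\,|y-y'|,\qquad (1-\eta)Jf(y)\le JL_j\le(1+\eta)Jf(y),
\end{equation*}
for $y,y'\in R_j$ (obtained by partitioning according to the differential in a dense countable subset of rank-$k$ linear maps and applying Lusin on $Jf$). On each piece the coarea formula is essentially the linear formula, which follows from Fubini applied in coordinates adapted to $\ker L_j\oplus(\ker L_j)^\perp$: there the identity is the product formula $\H^N=\H^{N-k}\otimes\H^k$ scaled by the determinant giving $JL_j$. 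Summing over $j$ and letting $\eta\to0$ yields \eqref{caformula} on $R$, and combining with the vanishing contribution from $C$ completes the proof.

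The main obstacle is the critical-set analysis: Sard's theorem for merely Lipschitz maps is not the classical smooth Sard theorem, and one genuinely needs the Eilenberg inequality together with an approximation of $f$ on the set $\{Jf=0\}$ by maps whose image shrinks in $\H^k$. Everything else is essentially a blow-up argument that linearizes $f$ away from the critical set; the subtlety lies in ensuring that the cover $\{R_j\}$ can be chosen so that the error in replacing $f$ by $L_j$ is uniformly small in both the map and its Jacobian, which requires careful use of Rademacher's theorem together with a Lusin approximation of $Jf$.
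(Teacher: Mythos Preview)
The paper does not prove this theorem; it is quoted from Federer \cite{Fe} as a classical tool and used without proof throughout (for instance in Lemmas \ref{transversal}, \ref{slicesubded}, and \ref{connection}). There is therefore no proof in the paper to compare your proposal against.

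That said, your sketch is essentially the standard strategy one finds in Federer or in Evans--Gariepy \cite{E1}: localize to the Euclidean setting via charts, reduce to characteristic functions, split into the critical set $C=\{Jf=0\}$ and the regular set, linearize on the regular set by a Lusin-type decomposition into pieces where $df$ is uniformly close to a fixed surjective linear map, and verify the formula for linear maps via Fubini. One small remark on the critical-set step: the statement $\H^{k}(f(C))=0$ for Lipschitz $f$ is not an independent ``Sard lemma'' one can simply quote --- the classical Sard theorem needs more regularity than Lipschitz --- but is itself proved by the same covering argument you allude to (on small balls in $C$, $df$ is close to a rank-deficient linear map, so the image has small $\H^{k}$-content). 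Once that is established your logic is clean: for $\H^{k}$-a.e.\ $x$ one has $x\notin f(C)$, hence $C\cap f^{-1}(x)=\emptyset$, and the Eilenberg inequality is only needed to control integrability of the slice function. So the sketch is sound; just be aware that the ``Lipschitz Sard'' ingredient is part of the proof rather than a prerequisite.
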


\begin{lemma}\label{transversal}
Let $u\colon \rn \to \R$ be a Lipschitz function and let \(A \subset
\rn\) be a set of measure zero. Then
\[
\H^{N-1} (u^{-1}(s) \cap A )=0\qquad\text{for almost all \(s\)}.
\]
\end{lemma}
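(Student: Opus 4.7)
The plan is a direct application of the coarea formula (Theorem \ref{coarea}) with $X=\R^N$, $Y=\R$, $k=1$, $f=u$, and the test function $g=\chi_A$. Since $u$ is Lipschitz, it is differentiable $\H^N$-almost everywhere by Rademacher's theorem, and the $1$-dimensional Jacobian satisfies $Ju(y)=|\nabla u(y)|\le \Lip(u)$ for $\H^N$-a.e. $y\in \R^N$. In particular, $Ju$ is bounded.

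First, I would reduce to the case where $A$ is Borel. By the outer regularity of Lebesgue measure, there exists a Borel (indeed $G_\delta$) set $B\supset A$ with $|B|=0$; it suffices to prove the conclusion for $B$, since $u^{-1}(s)\cap A\subset u^{-1}(s)\cap B$ gives $\H^{N-1}(u^{-1}(s)\cap A)\le \H^{N-1}(u^{-1}(s)\cap B)$. So assume without loss of generality that $A$ is Borel and $|A|=0$; then $\chi_A$ is $\H^N$-measurable and $\H^N$-integrable (with integral zero).

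Next, since $Ju$ is bounded and $\chi_A(y)Ju(y)=0$ for $\H^N$-a.e. $y$ (because $|A|=0$), the left-hand side of \eqref{caformula} vanishes:
\[
\int_{\R^N}\chi_A(y)\,Ju(y)\,d\H^N(y)=0.
\]
Applying the coarea formula with $g=\chi_A$ therefore yields
\[
0=\int_{\R}\Big\{\int_{u^{-1}(s)}\chi_A(y)\,d\H^{N-1}(y)\Big\}\,d\H^1(s)=\int_{\R}\H^{N-1}(u^{-1}(s)\cap A)\,d\H^1(s).
\]
Since the integrand is nonnegative and measurable in $s$, it must vanish for $\H^1$-a.e. $s\in\R$, which is exactly the claim.

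The only delicate point is measurability: one needs the inner integral $s\mapsto \H^{N-1}(u^{-1}(s)\cap A)$ to be a $\H^1$-measurable function of $s$ so that the coarea identity makes sense. This is handled by the Borel reduction above, after which the measurability in $s$ is a standard consequence of Theorem \ref{coarea} applied to the Borel indicator $\chi_A$. No further obstacle arises; the argument is a one-line consequence of coarea once the Jacobian bound $Ju\le \Lip(u)$ is noted.
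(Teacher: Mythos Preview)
Your proof is correct and follows essentially the same approach as the paper: both apply the coarea formula with $g=\chi_A$, observe that the left-hand side vanishes because $|A|=0$, and conclude that $\H^{N-1}(u^{-1}(s)\cap A)=0$ for a.e.\ $s$. Your version simply adds the measurability bookkeeping (the Borel reduction) that the paper leaves implicit.
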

This can be directly seen from the coarea formula:
\[
0=\int_{A}|\nabla u(y)|\di y =\int_{\ri} \H^{N-1}(A\cap u^{-1}(s))
\di s.
\]

One of the fundamental results of geometric measure theory is that
any set of finite perimeter possesses a measure-theoretic interior
normal which is suitably general to ensure the validity of the
Gauss-Green theorem.

\begin{theorem}[DeGiorgi-Federer
\cite{Georgi1,Georgi2,Federer1,Federer2}]\label{ggbasic}
If \(E\) has finite perimeter, then, with \(\nnu\) denoting the
interior unit normal,
\[
\int_{E}\div \FF\di y=-\int_{\partial^{*}E}\FF(y)\cdot\nnu (y)\di
\H^{N-1}(y)
\]
whenever \( \FF\colon\rn\to\rn \) is Lipschitz.
\end{theorem}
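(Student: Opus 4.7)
The plan is to establish the identity by mollifying $\charfn{E}$, applying the elementary integration-by-parts formula available in the smooth setting, and then passing to the limit. Assuming (tacitly, as the left side otherwise need not be well-defined) that $E$ is bounded, let $\{u_{k}\}$ denote the mollification of $\charfn{E}$ as in Definition \ref{mollifiers} and Lemma \ref{mollifiers2}. For all $k$ sufficiently large, $u_{k}\in C^{\infty}_{c}(\rn)$ with supports contained in a common compact neighborhood $K$ of $\overline{E}$. Because $\FF$ is Lipschitz on $\rn$, Rademacher's theorem yields that $\nabla\FF$ exists almost everywhere and $\div\FF\in L^{\infty}(\rn)$. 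The classical Gauss--Green theorem for smooth functions then gives, for each such $k$,
\begin{equation}\label{smoothibp}
\int_{\rn}u_{k}\,\div\FF\,dy=-\int_{\rn}\FF\cdot\nabla u_{k}\,dy.
\end{equation}

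I would then pass to the limit $k\to\infty$ in each side of \eqref{smoothibp}. For the left side, Lemma \ref{mollifier-properties} gives $u_{k}\to\charfn{E}$ pointwise a.e.\ and in $L^{1}(\rn)$, and since $\div\FF\in L^{\infty}(\rn)$, dominated convergence produces
$$
\int_{\rn}u_{k}\,\div\FF\,dy\longrightarrow\int_{E}\div\FF\,dy.
$$
For the right side, parts (iii) and (v) of Lemma \ref{mollifiers2} together give $\nabla u_{k}\stackrel{*}{\rightharpoonup}\nabla\charfn{E}$ in $\mathcal{M}(\rn;\rn)$, and all of these vector measures are supported in the fixed compact set $K$. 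Since $\FF$ is continuous on $K$ (being Lipschitz on $\rn$), testing componentwise against a smooth compactly supported extension of $\FF$ that coincides with $\FF$ on $K$ yields
$$
\int_{\rn}\FF\cdot\nabla u_{k}\,dy\longrightarrow\int_{\rn}\FF\cdot d\nabla\charfn{E}.
$$

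Finally, I would invoke \eqref{normHaus} together with the definition of the measure-theoretic interior unit normal in Definition \ref{reducedboundary} to identify the vector-valued measure $\nabla\charfn{E}$ as $\nnu\,\H^{N-1}\rtangle\partial^{*}E$, so that
$$
\int_{\rn}\FF\cdot d\nabla\charfn{E}=\int_{\partial^{*}E}\FF(y)\cdot\nnu(y)\,d\H^{N-1}(y).
$$
Combining these two limits with the sign on the right of \eqref{smoothibp} yields the desired identity.

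The only real technical obstacle is justifying that the weak-$*$ limit can be paired with the continuous but not compactly supported vector field $\FF$. This is handled by the uniform compactness of the supports of the approximating measures $\nabla u_{k}$, which permits $\FF$ to be truncated by a smooth cut-off equal to one on $K$ without altering any of the integrals under consideration; no further delicate traces are needed precisely because $\FF$ is continuous on $K$, in contrast to the general $\DM^\infty$ setting treated later in the paper where $\FF$ has only $L^\infty$ regularity.
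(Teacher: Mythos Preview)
The paper does not supply its own proof of this theorem; it is stated as a classical result and attributed to DeGiorgi and Federer via citation. Your mollification argument is correct and is in fact the standard route once the structure theorem for the reduced boundary is taken as known: the identification $\nabla\charfn{E}=\nnu_{E}\,\H^{N-1}\rtangle\partial^{*}E$, which you obtain by combining \eqref{normHaus} with Definition \ref{reducedboundary}, is precisely the deep input from DeGiorgi--Federer, and everything else in your write-up (Rademacher for $\div\FF\in L^{\infty}$, dominated convergence on the left, weak-$*$ convergence of $\nabla u_{k}$ tested against the continuous field $\FF$ on the right, the cut-off to localize $\FF$) is routine and correctly handled.
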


The DeGiorgi-Federer result shows that integration by parts holds on a
very large and
rich family of sets, but only for fields, \(\FF \), that are
Lipschitz. As we explained in the introduction,  the Gauss-Green
formula for $BV$ vector fields over sets of finite perimeter was
treated by Maz'ja \cite{BM2}
and  Volpert \cite{Volpert}. We contrast their work with that of the
following result by Fuglede.

\begin{theorem}[Fuglede \cite{fuglede}]\label{fugledeaaa}
Let $\FF\in \DM^{p}(\R^{N})$, $1\le p\le\infty$.  Then there exists
a function $g: \R^{N}\to \R$ with $g\in L^{q}$,
$\frac{1}{q}+\frac{1}{p}=1$, such that
\begin{equation}
  \label{fugledegg}
\int_{E}{\div}\FF = -\int_{\partial ^{*}E}\FF(y)\cdot\nnu
(y)\,d\H^{N-1}(y)
\end{equation}
for all sets of finite perimeter, $E$, except possibly those for
which
\[
\int_{\partial ^{*}E}g(y)\,d\H^{N-1}(y)=\infty .
\]
\end{theorem}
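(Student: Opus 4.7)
The plan is to combine three ingredients: the classical Gauss--Green theorem for smooth fields (Theorem \ref{ggbasic}) applied to mollifications $\FF_k:=\FF\ast\rho_{\varepsilon_k}$ of $\FF$; weak-$*$ convergence of the smoothed divergences $\mu_k:=\div\FF_k$ to $\mu:=\div\FF$; and a Borel--Cantelli style selection of $\FF_k$ that produces a single Borel function $g\in L^{q}(\rn)$ whose finite integral against $\H^{N-1}\rtangle\partial^{*}E$ is enough to control the traces of $\FF_k$ on $\partial^{*}E$.

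First, I would apply Theorem \ref{ggbasic} to each smooth $\FF_k$ to obtain
\[
\int_{E}\div\FF_k\,dy=-\int_{\partial^{*}E}\FF_k\cdot\nnu\,d\H^{N-1}
\]
for every set of finite perimeter $E$ and every $k$. On the bulk side, $\mu_k\stackrel{*}{\rightharpoonup}\mu$, and combining Lemma \ref{wkstconvergence} with Lemma \ref{capacity} (to rule out $\|\mu\|$-concentration on appropriately thin sets), together with a one-sided smooth approximation of $E$ from its measure-theoretic interior when needed, yields $\int_{E}\div\FF_k\,dy\to\mu(E)$ after passing to a suitable subsequence.

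Next, to pass to the limit in the surface integral, I would extract a subsequence with $\norm{\FF_{k+1}-\FF_k}_{p;\rn}$ rapidly summable and construct $g\in L^{q}(\rn)$ via Fuglede's extremal-length / $q$-modulus framework for families of measures, so that $\int_{\partial^{*}E}g\,d\H^{N-1}<\infty$ forces the sequence $\{\FF_k|_{\partial^{*}E}\}$ to be Cauchy in $L^{1}(\H^{N-1}\rtangle\partial^{*}E)$. Its limit is then the legitimate candidate for $\FF\cdot\nnu$ on $\partial^{*}E$, and passing to the limit in the smoothed identity above gives \eqref{fugledegg}.

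The main obstacle is this last step: one must pin down a single $g\in L^{q}$ whose $\H^{N-1}\rtangle\partial^{*}E$-integrability is equivalent to convergence of the surface integrals $\int_{\partial^{*}E}\FF_k\cdot\nnu\,d\H^{N-1}$. The key tool is the duality between $p$-summability of a function in the ambient space and the $q$-modulus of a family of rectifiable $(N-1)$-surfaces: the exceptional family is exactly the family of $q$-modulus zero, and the existence of such a $g$ is the concrete dual characterization of this null family. A secondary subtlety, sharpest in the case $p=\infty$ (hence $q=1$) where $\|\mu\|$ may charge $\partial^{*}E$, is that the bulk passage to the limit must be supplemented by the one-sided interior approximation alluded to above, foreshadowing the main approximation theorem of \S 4--\S 5.
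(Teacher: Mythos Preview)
The paper does not prove this theorem at all: it is quoted from Fuglede \cite{fuglede} as background and motivation, with the explicit remark that ``we don't employ Fuglede's theorem directly, his result provided the motivation and insight for the development of our method.'' So there is no proof in the paper to compare your proposal against.

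That said, your sketch has a structural problem worth flagging. On the surface side, mollifying and invoking the $q$-modulus/extremal-length machinery to produce $g\in L^{q}$ is indeed Fuglede's mechanism, and that part is on the right track. The difficulty is your handling of the bulk term $\int_E\div\FF_k\to\mu(E)$. Weak-$*$ convergence of $\mu_k$ to $\mu$ gives this only when $\|\mu\|(\partial E)=0$ (Lemma \ref{wkstconvergence}(iii)), and the stated exceptional condition $\int_{\partial^{*}E}g\,d\H^{N-1}<\infty$ does not by itself force $\|\mu\|(\partial E)=0$. You try to patch this by invoking ``a one-sided smooth approximation of $E$ from its measure-theoretic interior \ldots\ foreshadowing the main approximation theorem of \S4--\S5,'' but that is precisely the new machinery this paper builds in order to go \emph{beyond} Fuglede's result; importing it here to prove Fuglede's theorem is logically backwards and, in any case, yields the stronger Theorem \ref{main} directly, making the detour through Theorem \ref{fugledeaaa} pointless. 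Fuglede's own argument avoids this: the exceptional family is characterized via the modulus of the family of measures $\H^{N-1}\rtangle\partial^{*}E$, and the passage to the limit is organized so that both the boundary and bulk limits are controlled simultaneously by the single modulus condition, without any interior approximation of $E$. If you want a self-contained proof, you should follow that route rather than borrowing from \S4--\S5.
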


The following, which is a direct consequence of Fuglede's result,
will be of great use to us. Suppose that \(u\colon \rn \to \ri\) is
Lipschitz. For \(s<t\), consider the ``annulus''
\(\mathcal{A}_{s;t}:= \{x:s<u(s)\le t \}\) determined by \(u\).
Then, by appealing to the coarea formula, we see that
\(\mathcal{A}_{s;t} \) is a set finite perimeter for almost all
\(s<t\). Moreover, again appealing  to the coarea formula, we see
that, for almost all \(s<t\),
\begin{equation}
  \label{goodann}
(\div \FF) (A_{s;t}):=\int_{\mathcal{A}_{s;t} }{\div}  \FF
=-\int_{\partial^{*}\mathcal{A}_{s;t} }\FF(y)\cdot\nnu
(y)\,d\H^{N-1}(y).
\end{equation}

One of the main objectives of this paper is to demonstrate that,
when \(\FF\in \DM^{\infty}(\rn)\), we can extend Fuglede's result by
showing that \eqref{fugledegg} and \eqref{goodann} hold for all sets
of finite perimeter,  not merely  for ``almost all'' sets in the
sense of Fuglede \cite{fuglede}. Although we don't employ Fuglede's
theorem directly, his result provided the motivation and insight for
the development of our method.

The Gauss-Green formula for bounded divergence-measure fields over
sets of finite perimeter was first obtained in Chen-Torres
\cite{ChenTorres}. The product rule from Lemma \ref{productrule}
was used to prove that
\begin{equation}
  \label{chent}
(\div \FF)(E^{1}) :=  \int_{E^{1}}\div \FF=-\int_{\partial^*E
}2\overline{\chi_{E}\FF\cdot \nabla u_{E}},
\end{equation}
where $2\overline{\chi_{E}\FF\cdot \nabla u_{E}}$ is the weak* limit
 of the measures $2\chi_E \FF \cdot \nabla u_k$. One of the main
objectives of this paper is to obtain the trace measure as the limit
of normal traces over smooth boundaries that approximate $\po^* E$.

\section{The Normal Trace and the Gauss-Green
 Formula for $\DM^\infty$ fields
over smoothly bounded sets}

In this section we develop a method of obtaining the normal trace
and thereby obtain the Gauss-Green formula for a bounded
divergence-measure field over any smoothly bounded set. This
provides the foundation for the development for the general case.

\begin{definition}
  Given a compact \(C^{1}\)-manifold, \(M\), we define the {\bf
    exterior} determined by \(M\) to be that (connected) component,
  \(\mathcal{U}\), of \(\rn \setminus M\) which is unbounded. The {\bf
interior} determined by \(M\), \(U\), is defined to be everything
else in
  the complement of \(M\); namely,
\[
U=\medcup_{k=1}^{\infty }B_{k},\quad\text{\(B_{k} \subset
  \rn \setminus M\) a bounded component.}
\]
Thus,
\[
\rn \setminus M=\mathcal{U} \cup\big(\medcup_{k=1}^{\infty
}B_{k}\big)=\mathcal{U}\cup U.
\]
\end{definition}

\begin{theorem}\label{GGforC1}
Let \(U\subset\rn\) be the interior determined by a compact, \(
C^{1}\), manifold \(M\) of dimension \(N-1\) with
\(\H^{N-1}(M)<\infty\). Then, for any \(
\FF\in\DM^\infty_{loc}(\rn)\), there exist  a signed measure \(
\sigma \) supported on \(
\partial U=M \) such that \( \sigma <
<\mathcal{H}^{N-1}\rtangle\partial U\) and a function
\(\mathscr{F}_{i}\cdot\nnu : \partial U \to \R\) such that
\begin{equation}\label{weakgg}
\mu (U):=   (\div \FF)(U)=-\sigma (\partial U)=-\int_{\po U
}(\mathscr{F}_{i}\cdot \nnu)(y)\di \H^{N-1}(y)
\end{equation}
and
$$
\norm{\mathscr{F}_{i}\cdot
\nnu}_{\infty}\leq C \norm{\FF}_{\infty},
$$
where $C$ is a constant depending only on \(N \) and $U$.
\end{theorem}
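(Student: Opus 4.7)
The plan is to exploit the $C^1$ structure of $M$. Since $M$ is a compact $C^1$ hypersurface in $\R^N$, the distance function $d(x):=\mathrm{dist}(x,M)$ is $C^1$ in a tubular neighborhood of $M$, and for all sufficiently small $t>0$ the set $U_t := \{x \in U : d(x) > t\}$ has smooth boundary $\partial U_t = \{d=t\}$ with interior unit normal $\nnu_t = \nabla d$. This gives a one-sided smooth approximation of $U$ from its measure-theoretic interior. I would introduce the Lipschitz cutoff $\phi_t$ defined by $\phi_t(x) := \min(d(x)/t,1)$ on $\bar U$ and $\phi_t(x):=0$ on $\R^N \setminus U$, so that $\phi_t = 1$ on $U_t$, $\phi_t = d/t$ on $U \setminus U_t$, and $\nabla \phi_t = t^{-1}\nabla d\,\chi_{\{0<d<t\}}$.

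For any $\psi \in C_c^1(\R^N)$, the product $\psi\phi_t$ is Lipschitz with compact support, so after mollification (justified because $\FF \in L^\infty$) the defining identity of $\div\FF = \mu$ yields
\begin{equation*}
\int \psi\phi_t \, d\mu \;=\; -\int \phi_t\,\FF \cdot \nabla\psi\, dy \;-\; \frac{1}{t}\int_{\{0<d<t\}}\psi\,\FF \cdot \nabla d\, dy.
\end{equation*}
Federer's coarea formula (Theorem \ref{coarea}) applied to the Lipschitz function $d$, together with $|\nabla d|=1$ in the tubular neighborhood, rewrites the last term as $\frac{1}{t}\int_0^t \int_{\partial U_s}\psi\,\FF \cdot \nnu_s\, d\H^{N-1}\, ds$. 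Passing $t \to 0^+$, dominated convergence (using $|\phi_t|\le 1$ and $\|\mu\|(\bar U) < \infty$) on the left-hand side gives
\begin{equation*}
\mathscr{F}(\psi) := \int_U \psi \, d\mu + \int_U \FF \cdot \nabla\psi\, dy = -\lim_{t\to 0^+}\frac{1}{t}\int_0^t \int_{\partial U_s}\psi\,\FF \cdot \nnu_s\, d\H^{N-1}\, ds.
\end{equation*}

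The parallel-surface map $\Phi_s(y) := y + s\,\nnu(y)$ is a $C^1$-diffeomorphism of $M$ onto $\partial U_s$ for small $s>0$, with Jacobian tending to $1$ uniformly as $s \to 0^+$; hence $s \mapsto \int_{\partial U_s}|\psi|\,d\H^{N-1}$ is continuous at $s=0$ with value $\int_M |\psi|\,d\H^{N-1}$, and so
\begin{equation*}
|\mathscr{F}(\psi)| \;\le\; \|\FF\|_\infty \int_M |\psi|\,d\H^{N-1}.
\end{equation*}
Applied to a difference $\eta = \psi_1 - \psi_2$ with $\eta|_M \equiv 0$, this bound — combined with $|\eta(\Phi_s(y))| \le \|\nabla\eta\|_\infty s$ — forces $\mathscr{F}(\eta)=0$, so $\mathscr{F}$ descends to a bounded linear functional on the dense subspace $\{\psi|_M:\psi\in C_c^1(\R^N)\}$ of $L^1(M, \H^{N-1})$ with norm at most $\|\FF\|_\infty$. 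By $L^1$--$L^\infty$ duality there exists $\mathscr{F}_i\cdot\nnu \in L^\infty(M,\H^{N-1})$ with $\|\mathscr{F}_i\cdot\nnu\|_\infty \le \|\FF\|_\infty$ and $\mathscr{F}(\psi) = -\int_M \psi\,(\mathscr{F}_i\cdot\nnu)\, d\H^{N-1}$. Choosing $\psi \in C_c^1(\R^N)$ with $\psi \equiv 1$ on a neighborhood of $\bar U$ makes $\nabla\psi$ vanish on $\bar U$, so $\mathscr{F}(\psi) = \mu(U)$, and setting $\sigma := (\mathscr{F}_i\cdot\nnu)\,\H^{N-1}\rtangle M$ yields both $\mu(U) = -\sigma(\partial U)$ and the absolute continuity $\sigma \ll \H^{N-1}\rtangle \partial U$.

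The main obstacle is securing the Cesàro-limit formula above: $\FF$ is only defined $\mathcal{L}^N$-a.e., so the integrals $\int_{\partial U_s}\psi\,\FF\cdot\nnu_s\,d\H^{N-1}$ require the coarea formula for their very meaning and may be quite irregular as functions of $s$; moreover, $\mu$ may concentrate mass on $M$, so one cannot expect pointwise limits of the surface integrals in $s$. The averaging in $s$ on $(0,t)$ and the $C^1$ foliation of $M$ by $\partial U_s$ are precisely what tame this irregularity. Once the Cesàro representation is in place, continuity of the level-set integrals via $\Phi_s$ and extraction of the $L^\infty$ density via duality are routine.
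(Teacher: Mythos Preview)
Your overall strategy---a Lipschitz cutoff built from a one-sided foliation, the coarea formula to convert the volume integral into an average of surface integrals, and then $L^1$--$L^\infty$ duality to extract the trace---is sound and is genuinely different from the paper's route. The paper instead applies Lemma~\ref{Fuglede} on each level set $U_t$ to obtain $\mu(U_t)=-\sigma_t(\partial U_t)$, shows the measures $\sigma_t$ are uniformly bounded, extracts a weak* limit $\sigma$, and then verifies that $\sigma$ is supported on $M$, absolutely continuous with respect to $\H^{N-1}\rtangle M$, and independent of the approximating sequence. Your cutoff-and-duality argument is more direct and, once it works, yields the sharp constant $C=1$ (which the paper only recovers later, in Theorem~\ref{main}(I)(i)).

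There is, however, a real regularity gap. You assert that the distance function $d(x)=\mathrm{dist}(x,M)$ is $C^1$ in a tubular neighborhood and that $\Phi_s(y)=y+s\,\nnu(y)$ is a $C^1$ diffeomorphism of $M$ onto $\partial U_s$. Both claims require $M$ to be of class $C^{1,1}$ (positive reach) or $C^2$; for a merely $C^1$ hypersurface the unit normal $\nnu$ is only continuous, $\Phi_s$ is not $C^1$, the nearest-point projection can fail to be single-valued arbitrarily close to $M$, and $d$ need not be differentiable off $M$. Without this you cannot conclude that $s\mapsto\int_{\partial U_s}|\psi|\,d\H^{N-1}$ is continuous at $s=0$ (or even bounded), which is exactly what your norm bound and your ``$\mathscr{F}$ depends only on $\psi|_M$'' step rely on. The paper's remark immediately following its proof flags precisely this point: the interior normals themselves need not be non-intersecting near $\partial U$ unless $U$ is $C^2$. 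The paper circumvents the issue by invoking Whitney~\cite{Whitney} to obtain a $C^1$ \emph{quasi-normal} field $V$ on $M$, close to $\nnu$, whose flow lines do foliate a one-sided neighborhood; the function $\psi(q)=|q-\pi(q)|$ along these $V$-rays is then genuinely $C^1$, Sard applies, and $\H^{N-1}(\psi^{-1}(t))\le C_c\,\H^{N-1}(M)$ follows from the $C^1$ bound on $V$. Your argument can be repaired verbatim by replacing $d$ with this $\psi$; the cutoff becomes $\phi_t=\min(\psi/t,1)$, the coarea formula (for the $C^1$, hence Lipschitz, function $\psi$) still applies with $|\nabla\psi|$ bounded above and below near $M$, and the rest of your duality argument goes through.
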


With \(\nnu(y)\) denoting the interior unit normal to \(M\) at \(y
\), we may regard \(\mathscr{F}_{i}\cdot\nnu\) as the {\bf interior
normal trace} of \(\FF\) on \(\partial U\) and thus write
\[
(\mathscr{F} _{i}\cdot \nnu)(y)=\FF(y)\cdot\nnu (y).
\]
Hence, with this convention, it is convenient to abuse the notation
and thus write \eqref{weakgg} as

\begin{equation}\label{abusetrace}
\mu (U)=\int_{U}\div \FF=-\int_{\partial U}\FF(y)\cdot\nnu (y)\,\di
\mathcal{H}^{N-1}(y),
\end{equation}
while bearing in mind that, since \(\FF\) is merely a measurable
field and thus defined only up to a Lebesgue null set, it may not
even be defined on \( \partial U\). We use the term ``interior
normal trace'' to suggest that \(\mathscr{F} _{i} \cdot \nnu\) is
determined by the behavior of \(\FF\) in the interior determined by
the manifold \(M\). The proof will reveal that, in a similar way, it
is possible to define the concept of ``exterior normal trace''. This
will be discussed more fully
below in Theorem \ref{main}.\\

The next example shows that our general trace theorem remains valid
even though the field \(\FF\) is singular on an open subset of
\(\partial U\).

\begin{lemma}\label{Fuglede}
Let \(\FF\in \DM^\infty_{loc}(\rn)\) whose distributional divergence
is a measure \( \mu \), and let \( \FF_{\varepsilon } \) be a
mollification of \(\FF\). Then, because \(\FF_{\varepsilon}\) is
smooth (in particular, Lipschitz), the classical divergence theorem
holds whenever \(E\Subset\rn \) is a set of finite perimeter,
namely,
\begin{equation}\label{ggforsmoothF}
\int_{E}\div \FF_{\varepsilon }=-\int_{\partial^ *E}\FF_{\varepsilon
}(y)\cdot\nnu(y) \di \H^{N-1}(y).
\end{equation}
If, in addition,  we assume the following two conditions:
\begin{enumerate}[\rm (i) ]
\item $\FF_{\varepsilon }\to \FF $\,\, \( \H^{N-1} \)-a.e. on \(
\partial^ *E \),
\item \( \mu (\partial E)=0 \),
\end{enumerate}
then
\begin{equation}\label{gg}
\mu (E)=-\int_{\partial^ *E}\FF(y)\cdot\nnu(y) \di \H^{N-1}(y).
\end{equation}
\end{lemma}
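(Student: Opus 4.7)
The starting point is identity \eqref{ggforsmoothF}, which is immediate: each mollification $\FF_\varepsilon$ is smooth, hence Lipschitz on a neighborhood of the bounded set $E$, so Theorem \ref{ggbasic} (DeGiorgi--Federer) applies directly. The remainder of the argument consists of sending $\varepsilon \to 0$ on each side and matching the limits.

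\textbf{Right-hand side.} Hypothesis (i) provides pointwise $\H^{N-1}$-a.e.\ convergence $\FF_\varepsilon \to \FF$ on $\partial^{*}E$. Standard mollification estimates give $\|\FF_\varepsilon\|_{\infty}\le \|\FF\|_{\infty}$ on any fixed neighborhood of $E$, and Definition \ref{def2} together with \eqref{normHaus} ensures $\H^{N-1}(\partial^{*}E)<\infty$. Dominated convergence on $(\partial^{*}E,\H^{N-1})$ then yields
\[
\int_{\partial^{*}E}\FF_\varepsilon(y)\cdot\nnu(y)\,d\H^{N-1}(y)\;\longrightarrow\;\int_{\partial^{*}E}\FF(y)\cdot\nnu(y)\,d\H^{N-1}(y).
\]

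\textbf{Left-hand side.} Setting $\mu_\varepsilon:=\div\FF_\varepsilon=\rho_\varepsilon\ast\mu$ and using Fubini with the symmetry of $\rho_\varepsilon$,
\[
\int_{E}\div\FF_\varepsilon\,dy \;=\; \mu_\varepsilon(E) \;=\; \int_{\rn}(\rho_\varepsilon\ast\charfn{E})(x)\,d\mu(x).
\]
By Lemma \ref{mollifiers2}(ii) the convolutions $\rho_\varepsilon\ast\charfn{E}$ converge, outside a set $\mathcal N$ with $\H^{N-1}(\mathcal N)=0$, to the precise representative $u_{_{E}}$ taking the values $1,\tfrac12,0$ on $E^{1},\partial^{*}E,E^{0}$ respectively. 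By Lemma \ref{capacity} applied with $p=\infty$ (equivalently $q=1$, invoking Remark \ref{fleming}), $\|\mu\|$ is absolutely continuous with respect to $\H^{N-1}$, so the convergence is also $\|\mu\|$-a.e., hence $\mu$-a.e. Since $|\rho_\varepsilon\ast\charfn{E}|\le 1$, dominated convergence yields
\[
\mu_\varepsilon(E)\;\longrightarrow\;\int_{\rn}u_{_{E}}\,d\mu\;=\;\mu(E^{1})+\tfrac12\,\mu(\partial^{*}E).
\]

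\textbf{Matching the two limits.} By Remark \ref{rem00} and $\|\mu\|\ll\H^{N-1}$, the symmetric difference between $\partial^{*}E$ and $\partial^{m}E$ is $\mu$-null. Under the convention $E=\{y:D(E,y)=1\}\cup\partial^{m}E$ of Definition \ref{convention}, $\partial^{*}E\subset\partial E$ up to a $\mu$-null set, so hypothesis (ii) gives $\mu(\partial^{*}E)=0$ and $\mu(E^{1})=\mu(E)$. Equating the two limits then produces the asserted formula \eqref{gg}.

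\textbf{Main obstacle.} The only nontrivial point is the identification of the limit $\mu(E^{1})+\tfrac12\mu(\partial^{*}E)$ with $\mu(E)$: one must use the interplay between the chosen measure-theoretic representative of $E$, the reduced/measure-theoretic boundary inclusions of Remark \ref{rem00}, and the absolute continuity of $\|\mu\|$ with respect to $\H^{N-1}$ supplied by Lemma \ref{capacity} in order to exploit hypothesis (ii). All other steps are routine applications of dominated convergence.
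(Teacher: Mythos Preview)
Your argument is correct. The paper's own proof is considerably more compressed on the left-hand side: it simply records that $\mu_\varepsilon:=\div\FF_\varepsilon=\rho_\varepsilon*\mu\stackrel{*}{\rightharpoonup}\mu$ and then invokes hypothesis~(ii) together with Lemma~\ref{wkstconvergence}(iii) to conclude $\mu_\varepsilon(E)\to\mu(E)$ in one line, never passing through the intermediate value $\mu(E^{1})+\tfrac12\mu(\partial^{*}E)$. Your route via Fubini, the precise representative $u_{_{E}}$ of Lemma~\ref{mollifiers2}(ii), and the absolute continuity $\|\mu\|\ll\H^{N-1}$ from Lemma~\ref{capacity} is longer but more informative: it identifies exactly what $\lim_{\varepsilon\to0}\mu_\varepsilon(E)$ equals \emph{without} hypothesis~(ii), which foreshadows the dichotomy between $\mu(E^{1})$ and $\mu(E)$ that drives Theorem~\ref{main}. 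One small caveat: the implication ``$\partial^{*}E\subset\partial E$ and $\mu(\partial E)=0$, hence $\mu(\partial^{*}E)=0$'' is not literally valid for a signed measure; what you actually need (and what always holds in the paper's applications of this lemma, e.g.\ in the proof of Theorem~\ref{GGforC1}) is $\|\mu\|(\partial E)=0$. The paper's own one-line appeal to Lemma~\ref{wkstconvergence}(iii) tacitly requires the same strengthening, so this is a reading of hypothesis~(ii) rather than a defect peculiar to your argument.
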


The importance of this result is that, with assumptions (i) and
(ii), we obtain the Gauss-Green theorem for all sets of finite
perimeter whenever \(\FF \) is a bounded, measurable vector field
with \(\div \FF=\mu \). As stated earlier, our main objective is to
obtain the same result without assuming (i) and (ii), by defining a
suitable notion of normal trace for \(\FF\) on \(\partial^{*}E \).

\begin{proof}
Since \(\mu _{\varepsilon }:= \div \FF_{\varepsilon
}\stackrel{*}{\rightharpoonup} \, \div \FF=\mu \)
in $\mathcal{M}(\rn)$ and using the fact that \( \FF_{\varepsilon}
\rtangle \partial^{*}E\) is uniformly bounded, we obtain from
\eqref{ggforsmoothF} that
\begin{eqnarray*}
&&\mu _{\varepsilon}(E) :=\int_{E}\div \FF_{\varepsilon}
=-\int_{\partial^*E}\FF_\varepsilon (y)
 \cdot\nnu(y) \di \H^{N-1}(y)
 \to -\int_{\partial^
*E}\FF(y)\cdot\nnu(y) \di \H^{N-1}(y), \\
&&\mu _{\varepsilon }(E)\to\mu (E)\quad \text{(by assumption {\rm
(ii)})}.
\end{eqnarray*}
This establishes our result.
\end{proof}

\begin{proof}[Proof of Theorem {\rm \ref{GGforC1}}]
According to \cite{Whitney}, there exist a \(C^{1}\) unit vector
  field \(V\) defined on \( \partial U\) and a number
  \(\delta>0 \) such that \(V\) is close to the interior normal on
  \(\partial U\) and that, with the line segment joining \(p
  \in \partial U\) and \(q \in U\) defined as
\[
\lambda(p):=\{q:q=tV(p)+(1-t)p,\, 0\le t\le1 \},
\]
then
\[
\Lambda^{*}(p):=\lambda (p)\cap B(p,\delta)\in U.
\]
We think of \(\Lambda^{*}(p)\) as a {\bf quasi-normal} to \(\partial
U\) at \(p\). Moreover, as \(p\) ranges over \(\partial U\), the
quasi-normals, \( \Lambda^{*}(p) \), fill out a neighborhood \(
U^{*} \) of \( \partial U\) in a one-to-one way. That is,
\[
U^*=\medcup_{p\in \partial U}\Lambda^{*}(p)\qquad\text{ with
\(\Lambda^{*} (p)\cap \Lambda^{*}(q)=\emptyset\) when \(p \ne q\)}.
\footnote{Here, we could have defined \(E^{*}_{e}:= (\rn
  \setminus \partial U)\cap E^{*}\) to develop the notion of the
  exterior normal} \]
The mapping \( \pi\colon U^{*}\to \partial U\), which can be
considered as the projection of \(U^{*}\) onto \(\partial U\) along
the quasi-normal, \(\Lambda ^{*}\), defined by
\[
\pi(q)=p\qquad\text{if \( q\in \Lambda^{*}_{p},\)}
\]
is clearly of class \( C^{1} \) and thus, so is the mapping \( \psi
\colon U^{*}\to \R\) defined by
\[
\psi(q):=\abs{q-p} \qquad \text{where \(\pi(q)=p\)}.
\]
Consequently, Sard's theorem implies that the critical values, \(
\mathbb{V} \), of \( \psi \) are of measure zero. Thus, the implicit
function theorem implies that \(\partial U_{t}:= \psi^{-1}(t)\) is a
\( C^{1} \) manifold for almost all
\(t\in(0,\delta)\setminus\mathbb{V}\). The manifold \(
\partial U_{t} \) can be considered as a deformation of
\( \partial U\) along the line segments \( \lambda (p) \). Thus, the
sets \( U_{t}:=\{\psi >t\} \) are open subsets of \( U\) with smooth
boundaries and \(\partial U_{t}=\psi ^{-1}(t) \) for a.e. \(t\).
Observe that \( \psi ^{-1}(t)=\{q\in U :q=t(V(p)-p),\, p\in
\partial U\} \). Since \(V\in C^{1}\) and \(\partial U\) is compact,
it follows that \( V \) is Lipschitz with Lipschitz constant, say \(
C_c \), and therefore that
\begin{equation}\label{uniformbound}
\mathcal{H}^{N-1}(\psi ^{-1}(t))\le C_c\, \mathcal{H}^{N-1}(\partial
U).
\end{equation}

To see this, consider a local coordinate system on \( \partial U\)
expressed as the inverse of its projection onto the tangent plane at
a point \( p\in \partial U\). Consider \( T_{p}(\partial U) \) as a
subspace in \( \rn \) with the notation \( p=(p',0)\in\rn \) as
identified with \( p'\in \R^{N-1} \). Thus, \( g\colon
T_{p}(\partial U)\to \partial U\) will render \(
\partial U\) as the graph of \(g \) so that
\(g(p')=(p',y(p')) \), where \(y\in C^{1}\) and is the ``height''
function. Now define \( h\colon \partial U\to
\partial U_{t} \) by
$$
 h(p',y(p'))=tV(p',y(p'))+(1-t)(p',y(p')).
$$
Clearly, all the derivatives of \( h \circ g \) are uniformly
bounded on compact subsets of \( T_{p}(\partial U) \) and then
\eqref{uniformbound} is evident.  An application of Lemma
\ref{Fuglede} shows that
\[
\mu (U_{t})=-\int_{\partial U_{t}}\FF(y)\cdot\nnu (y)\,
d\mathcal{H}^{N-1}(y)\quad\text{for a.e. \( t>0 \)},
\]
where $\mu:=\div \FF$ is a Radon measure. Thus, the function \(
\FF\cdot\nnu\rtangle
\partial U_{t} \) induces a measure on \( \rn\). That is, for any
Borel set \( B\subset\rn \), define
\[ \sigma_{t}(B):=\int_{B\cap\partial U_{t}}\FF(y)\cdot\nnu (y)\,
d\mathcal{H}^{N-1}(y).
\]
Since \( \FF\in L^{\infty } \) and \( \mathcal{H}^{N-1}(\partial
U)<\infty  \), we see from \eqref{uniformbound} that \(
\mathcal{H}^{N-1}(\partial U_{t})\le C \) for some \( C>0 \), which
yields that the measures \( \sigma _{t} \), \( t>0 \), form a
bounded set in $\mathcal{M}(\rn)$. Hence, there exist a sequence \(
\{t_{k}\}\to0 \) and  Radon measures \( \sigma\), \(\sigma^{+} \),
and \(\sigma^{-}\) with $\sigma=\sigma^+-\sigma^-$ such that
\begin{equation} \label{sigmaconverge}
(\sigma_{t_k}^+, \sigma_{t_k}^-, \sigma _{t_{k}}) \,
\stackrel{*}{\rightharpoonup} \,(\sigma^+, \sigma^-, \sigma)
\quad\qquad\mbox{in}\,\, \mathcal{M}(\rn).
\end{equation}

Now we show that \(\sigma^+\) is supported on \(\partial U\). On the
contrary, let \(x\in \textnormal{spt} (\sigma^+) \setminus\partial
U\) and choose \(B(x,r)\) such that \( B(x,r)\cap\partial
U=\emptyset\). Since \( x\in \textnormal{spt} (\sigma^+)\), there
exists \( \varphi \in C(B(x,r)) \) such that \( \int\varphi \,
d\sigma^+:=\sigma^+ (\varphi )\ne0 \). Then, since \(\varphi\) is
continuous, we find that \(\sigma^+_{t}(\varphi )\to\sigma^+
(\varphi )\ne0 \). This implies that \( \sigma _{t}^+(\varphi
)\ne0\) for all small \( t>0\), which leads to a contradiction since
\( \partial U_{t}\cap B(x,r)=\emptyset \) and \(\textnormal{spt}
(\sigma_{t}^+)\subset\partial U_{t}\).

Clearly,
\begin{equation}
\sigma_{t_{k}}^+(\partial U_{t_{k}} )\to\sigma^+(\partial U),
\end{equation}
since
\[
\liminf_{k\to\infty }\sigma_{t_{k}}^+(\partial U_{t_{k}}
)=\liminf_{k\to\infty} \sigma_{t_{k}}^+(\rn )\ge \sigma^+
(\rn)=\sigma^+ (\partial U),
\]
whereas
\[
\limsup_{k\to\infty }\sigma_{t_{k}}^+(\partial U_{t_{k}}
)=\limsup_{k\to\infty }\sigma_{t_{k}}^+(K )\le \sigma^+(K)=\sigma^+
(\partial U)
\]
for any closed set \( K \) that contains \(U\), especially when \(
K=\rn \). In the same way, we prove that \begin{equation}
\sigma_{t_{k}}^-(\partial U_{t_{k}} )\to \sigma^-(\partial U),
\end{equation}
and hence
\begin{equation}\label{goodlimit}
\sigma_{t_{k}}(\partial U_{t_{k}} )\to\sigma(\partial U).
\end{equation}

Also note that the measure \(\sigma\) is well-defined, which can be
seen as follows: Let \(U_{t'_{k}}\) be another sequence of open sets
with smooth boundaries defined by \(U_{t_{k}'}:=\{\psi
>t_{k}'\} \) for which Lemma \ref{Fuglede} applies. Then, assuming
that  \( t_{k}>t'_{k} \) for all \( k \), we have
\begin{align*}
\mu (U_{t'_{k}}\setminus U_{t_{k}})
&=\int_{U_{t'_{k}}} \div \FF -\int_{U_{t_{k}}} \div \FF \\
&=-\int_{\partial U_{t'_{k}}}\FF(y)\cdot\nnu (y)\, d\H^{N-1}(y)
+\int_{\partial
U_{t_{k}}}\FF(y) \cdot \nnu(y)\, d\H^{N-1}(y)\\
&=-\sigma_{t'_{k}}(\partial U_{t'_{k}})+\sigma_{t_{k}}(\partial
U_{t_{k}}).
\end{align*}
Since \(U_{t'_{k}}\setminus U_{t_{k}} \) is a monotone sequence of
sets with \(U_{t'_{k}}\setminus U_{t_{k}}\to\emptyset \), it follows
that \( \mu(U_{t'_{k}}\setminus U_{t_{k}})\to0 \) and therefore that
\(\sigma_{t'_{k}}(\partial U_{t'_{k}})-\sigma_{t_{k}}(\partial
U_{t_{k}} )\to0\), which shows that \( \sigma  \) is well-defined.

To show that \(\sigma < <\mathcal{H}^{N-1}\rtangle\partial U\), let
\( A\subset\partial U\) with \( \mathcal{H}^{N-1}(A)=0 \). Then
there exists an open set $G \Subset \rn$, \( G\supset A \), such
that \( G:=\medcup_{k=1}^{N}B_{k}(r_{k}) \) with
\(\displaystyle\sum_{k}r_{k}^{N-1}<\varepsilon\) and $\H^{N-1}(G
\cap \po U)< \ve$. Therefore, we have
\[
\norm{\sigma _{t}}(G)\leq\int_{G\cap M_{t}}\abs{\FF(y)\cdot\nnu(y)}
\,d\mathcal{H}^{N-1} (y) \le \norm{\FF}_{L^\infty(G)
}\mathcal{H}^{N-1}(G\cap M_{t})<\varepsilon C
\norm{\FF}_{L^\infty(G)},
\]
that is, for some \(C>0\), \( \norm{\sigma_{t}} (G)< \varepsilon
C\norm{\FF}_{L^\infty(G)}\), which leads to the conclusion \(
\norm{\sigma}(A) =0\) as desired.

We note that, since the sets $U_{t_k}$ are increasing, we have
\begin{equation}\label{good2}
\mu(U_{t_k}) \to \mu(U).
\end{equation}
Thus, using \eqref{goodlimit} and \eqref{good2} and  sending $k \to
\infty$ in $\mu(U_{t_k})= -\sigma_{t_k}(\partial U_{t_k})$ yield
\begin{equation}
\mu(U)= -\sigma(\partial U).\qedhere
\end{equation}
\end{proof}

\begin{remark} From the proof above, it can be seen that,
if \(U\) were of class \(C^{2}\), then the interior normals to
\(\partial U\) themselves would not intersect in a
  sufficiently small neighborhood of \(\partial U\).
\end{remark}

\section{Almost One-Sided Smooth Approximation of
Sets of Finite Perimeter}

We now proceed to establish a fundamental approximation theorem for
a set of finite perimeter by a family of sets with smooth boundary
essentially from the measure-theoretic interior of the set with
respect to
any Radon measure that is absolutely continuous with respect to
$\mathcal{H}^{N-1}$.
That is, we prove that, for any Radon measure
$\mu$ on $\rn$ such that $\mu<<\mathcal{H}^{N-1}$, the superlevel
sets of the mollifications of the characteristic functions of sets
of finite perimeter provide an approximation by smooth sets which
are \(\|\mu\|\)-almost contained in the measure-theoretic interior
of \(E\). This allows us to employ Theorem \ref{GGforC1} which,
after passage to a limit, leads to our main result, Theorem
\ref{main}.

\begin{lemma}\label{general} Let $\mu$ be a
Radon measure on \(\rn \) such that \(\mu < < \H^{N-1}\). Let $E$ be
a set of finite perimeter, and let \(u_{k}\) be the mollification of
\(\charfn{E} \). Then, for any \(t \in (0,1) \) and $A_{k;t} := \{y:
u_{k}(y)>t\}$, there exist $\ve = \ve(t)$ and
\(k^{*}=k^{*}(\varepsilon,t)\) such that
\begin{enumerate}[\rm(i)]
\item$ {\|\mu\| }(A_{k;t}\setminus E)<\varepsilon
\quad\text{ \rm for all \(0<t <\frac{1}{2}\) and all \(k\ge
k^{*}\)}$;\\
\item$ {\|\mu\| }(A_{k;t}\setminus E^{1})<\varepsilon
\quad\text{\rm for all \(\frac{1}{2}<t<1\) and all \(k\ge k^{*}\)}$;\\
\item $\|\mu\| (E^{1}\setminus  A_{k;t})< \varepsilon
\quad\text{\rm for  all \(\frac{1}{2}<t<1\) and all \(k\ge k^{*}\)}
$;\\
\item $\|\mu\|(E\setminus  A_{k;t})< \varepsilon \quad\text{\rm
for  all \(0<t<\frac{1}{2}  \) and all \(k\ge k^{*}\)}$.
\end{enumerate}
\end{lemma}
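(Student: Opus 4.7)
The plan is to prove all four estimates by a single mechanism: combine the pointwise convergence $u_{k}(y)\to u_{_{E}}(y)$ outside an $\mathcal{H}^{N-1}$-null set (Lemma \ref{mollifiers2}(ii)) with the hypothesis $\mu\ll \mathcal{H}^{N-1}$ to upgrade this to pointwise convergence $\|\mu\|$-almost everywhere, and then apply the dominated convergence theorem to the indicator functions of the relevant symmetric differences. Since each $A_{k;t}$ lies in a fixed neighborhood of the bounded set $E$ and $\|\mu\|$ is a Radon measure, all the sets we shall integrate over have finite $\|\mu\|$-measure, so dominated convergence is legitimate.

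More specifically, let $\mathcal{N}$ be the $\mathcal{H}^{N-1}$-null set from Lemma \ref{mollifiers2}(ii); by absolute continuity, $\|\mu\|(\mathcal{N})=0$. Decompose $\R^{N}$ into $E^{1}\cup \partial^{*}E\cup E^{0}\cup \mathcal{N}'$ (with $\|\mu\|(\mathcal{N}')=0$), on each piece of which $u_{_{E}}$ equals $1$, $\tfrac12$, and $0$ respectively. For part (i), note that $A_{k;t}\setminus E\subset E^{0}$ (by the convention in Definition \ref{convention}); on $E^{0}\setminus \mathcal{N}$ we have $u_{k}\to 0<t$, so $\chi_{A_{k;t}\setminus E}\to 0$ pointwise $\|\mu\|$-a.e., and dominated convergence on the bounded set gives $\|\mu\|(A_{k;t}\setminus E)\to 0$. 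For part (ii), $A_{k;t}\setminus E^{1}\subset E^{0}\cup \partial^{*}E$ up to a null set; on $E^{0}$ again $u_{k}\to 0<t$, while on $\partial^{*}E\setminus \mathcal{N}$ we have $u_{k}\to \tfrac12<t$, so the same argument applies. Parts (iii) and (iv) are dual: on $E^{1}\setminus \mathcal{N}$ we have $u_{k}\to 1>t$ (for any $t<1$), and on $\partial^{*}E\setminus \mathcal{N}$ we have $u_{k}\to \tfrac12>t$ when $t<\tfrac12$, so in each case $\chi_{E^{1}\setminus A_{k;t}}$ (respectively $\chi_{E\setminus A_{k;t}}$) tends to $0$ pointwise $\|\mu\|$-a.e., and dominated convergence finishes the job.

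Given $\varepsilon>0$ and $t\in (0,1)\setminus\{\tfrac12\}$, choose $k^{*}$ large enough that each of the four convergent sequences lies below $\varepsilon$ for $k\ge k^{*}$; this yields the asserted $k^{*}=k^{*}(\varepsilon,t)$. The only mild subtlety is ensuring that the relevant sets are contained in a fixed bounded region so that $\|\mu\|$ of them is finite — this is guaranteed because $\spt(u_{k})\subset E+B(0,1/k)$ and $E\Subset \Omega$.

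The main (though minor) obstacle is conceptual rather than technical: one must pair the correct piece of $\R^{N}$ (namely $E^{0}$, $\partial^{*}E$, or $E^{1}$) with the correct threshold regime for $t$ (below or above $\tfrac12$), and verify in each of the four cases that the pointwise limit of the indicator is indeed $0$ on the relevant piece. Once the pointwise convergence is in hand, absolute continuity $\mu\ll \mathcal{H}^{N-1}$ and dominated convergence do the rest uniformly across the four parts.
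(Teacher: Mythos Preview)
Your proof is correct and rests on the same core idea as the paper's: the pointwise convergence $u_{k}\to u_{_{E}}$ outside an $\H^{N-1}$-null set (hence $\|\mu\|$-a.e.), combined with the trichotomy $u_{_{E}}\in\{0,\tfrac12,1\}$ on $E^{0}$, $\partial^{*}E$, $E^{1}$, forces the indicator of each of the four difference sets to tend to zero $\|\mu\|$-a.e. The only distinction is that you pass to the limit via the dominated convergence theorem, whereas the paper invokes Egorov's theorem to obtain a small exceptional set $U_{\varepsilon}$ off of which $|u_{k}-u_{_{E}}|<\varepsilon$ uniformly, and then checks the set inclusions directly (e.g.\ $A_{k;t}\setminus U_{\varepsilon}\subset E^{1}$ when $\tfrac12<t<1$ and $\varepsilon<t-\tfrac12$). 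Both routes are standard and equivalent here; your DCT packaging is slightly more streamlined, while the paper's Egorov version makes the role of the threshold constraints on $\varepsilon$ (such as $\varepsilon<t-\tfrac12$) a bit more explicit.
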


\begin{proof}
We first show (ii). With \(\frac{1}{2} <t <1\), choose \(
0<\varepsilon< t-\frac{1}{2} \).
 Let \(u_{k}\) denote (as usual) the mollification of
\(\charfn{E}  \). We know that \(u_{k}(y) \to u_{_{E}}(y) \) for
\(\H^{N-1}\)-a.e. \(y\) and therefore the same is true  for
\({\|\mu\|}\) as well. By Egorov's theorem, for any \(\varepsilon
>0\), there is  an open set \(U_{\varepsilon }\) such that
\({\|\mu\|}(U_{\varepsilon })<\varepsilon \) and that $|u_{k}(y) -
u_{_{E}}(y) |<\varepsilon$ for  all \(y \not\in U_{\varepsilon }\)
and for all \(k\ge k^{*}=k^{*}(\varepsilon, t)\).
 On \(A_{k;t}\setminus U_{\varepsilon }\), we have
\[
t <u_{k}(y).
\]
Since \(u_{k}(y)<u_{_{E }}(y)+\varepsilon \) on \(\rn \setminus
U_{\varepsilon }\), we have
\[
\frac{1}{2}  <t-\varepsilon <u_{_{E }}(y)\implies u_{_{E}}(y)=1
\implies y\in E^{1}.
\]
This yields
\[
A_{k;t}\setminus U_{\varepsilon }\subset E^{1} \implies
A_{k;t}\setminus E^{1}\subset U_{\varepsilon}.
\]
Since \({\|\mu\|} (U_{\varepsilon })<\varepsilon \), our desired
result (ii) follows.

For the proof of (i), given $0<t <\frac{1}{2}$, we choose $0<\ve <
t$ and proceed as above.

We next show (iv). With $0< t < \frac{1}{2}$, choose $0 < \ve
<\frac{1}{2}-t$. For all large \(k\), we have \(\abs{u_{k}(y) -
u_{_{E}}(y)}<\varepsilon \) for all \(y\not\in U_{\varepsilon }\).
Thus, on
 \(E \setminus U_{\varepsilon }\),
\[
\frac{1}{2}-u_{k}(y) \le u_{_{E}}(y)-u_{k}(y) < \varepsilon
\implies u_{k}(y) > \frac{1}{2} -\varepsilon > t \quad \mbox{for
all}\,\, y\in E \setminus U_{\varepsilon},
\]
which implies \(E \setminus U_{\varepsilon }\subset A_{k;t}.\)
Therefore, $E \setminus A_{k;t} \subset U_{\ve}$ and thus
${\|\mu\|}(E \setminus A_{k;t}) < \ve$.

For the proof of (iii), given $\frac{1}{2}<t < 1$, we choose $0 <\ve
< 1-t$ and proceed as above.
\end{proof}

\begin{corollary}\label{2results}
For each \(0<t<\frac{1}{2}\) and $\ve < \min \{t,\frac{1}{2}-t\}$,
there exists \(k^{*}=k^{*}(\varepsilon,t )>0\) such that
\begin{equation}\label{lessthanhalf}
{\|\mu\|}(A_{k;t} \Delta E)<\varepsilon \qquad \mbox{whenever}\,\,\,
k\ge k^{*}.
\end{equation}
For each \(\frac{1}{2}<t<1\) and $\ve < \min\{t-\frac{1}{2}, 1-t\}$,
there exists \(k^{*}=k^{*}(\varepsilon,t )>0\) such that
\begin{equation}\label{largerthanhalf}
{\|\mu\|}(A_{k;t}\Delta  E^{1})<\varepsilon
\qquad\mbox{whenever}\,\,\,
 k\ge k^{*}.
\end{equation}
\end{corollary}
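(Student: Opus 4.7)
The plan is to derive the corollary by a direct synthesis of the four inclusions established inside the proof of Lemma \ref{general}, observing that a single Egorov set controls everything simultaneously.

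First I would fix $t \in (0, 1/2)$ together with $\varepsilon < \min\{t, \tfrac{1}{2} - t\}$. Since the mollifications $u_k$ converge to $u_{_E}$ pointwise $\|\mu\|$-a.e.\ (because $\mu \ll \H^{N-1}$ and the convergence is $\H^{N-1}$-a.e.\ by Lemma \ref{mollifiers2}(ii)), Egorov's theorem supplies a single open set $U_\varepsilon$ with $\|\mu\|(U_\varepsilon) < \varepsilon$ on which $|u_k - u_{_E}| < \varepsilon$ for all $k \ge k^*(\varepsilon, t)$. The key observation is that the exact same Egorov set is the one used in proving both (i) and (iv) of Lemma \ref{general}, so the inclusions obtained there can be reused verbatim: the argument of (i) (needing $\varepsilon < t$) gives $A_{k;t} \setminus E \subset U_\varepsilon$, while the argument of (iv) (needing $\varepsilon < \tfrac{1}{2} - t$) gives $E \setminus A_{k;t} \subset U_\varepsilon$. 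Taking the union yields
\[
A_{k;t} \Delta E = (A_{k;t} \setminus E) \cup (E \setminus A_{k;t}) \subset U_\varepsilon,
\]
so $\|\mu\|(A_{k;t} \Delta E) \le \|\mu\|(U_\varepsilon) < \varepsilon$, which is \eqref{lessthanhalf}.

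For \eqref{largerthanhalf}, I would proceed identically with $t \in (1/2, 1)$ and $\varepsilon < \min\{t - \tfrac{1}{2}, 1 - t\}$, now invoking parts (ii) and (iii) of Lemma \ref{general} against the same Egorov set $U_\varepsilon$. The inclusion from (ii) requires $\varepsilon < t - \tfrac{1}{2}$ (so that $t - \varepsilon > \tfrac{1}{2}$) and gives $A_{k;t} \setminus E^1 \subset U_\varepsilon$; the inclusion from (iii) requires $\varepsilon < 1 - t$ (so that $1 - \varepsilon > t$) and gives $E^1 \setminus A_{k;t} \subset U_\varepsilon$. Their union controls $A_{k;t}\Delta E^1$ inside $U_\varepsilon$ and yields the desired bound.

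There is no real obstacle here since the substance of the argument is already inside Lemma \ref{general}; the only subtlety is to notice that both one-sided inclusions for a given $t$ can be driven by the \emph{same} Egorov exceptional set, which is what makes the bound $\varepsilon$ (rather than $2\varepsilon$) work without any loss. I would mention this explicitly to clarify that the two parts of Lemma \ref{general} are not being applied twice with independent Egorov sets, but rather that the single uniform-convergence set $U_\varepsilon$ simultaneously witnesses both one-sided statements under the hypothesis $\varepsilon < \min\{t, \tfrac{1}{2} - t\}$ (respectively $\varepsilon < \min\{t - \tfrac{1}{2}, 1 - t\}$).
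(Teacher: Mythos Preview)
Your proposal is correct and is exactly the argument the paper has in mind: the corollary is stated without proof immediately after Lemma~\ref{general}, and is meant to follow by combining parts (i)+(iv) (for $0<t<\tfrac12$) and (ii)+(iii) (for $\tfrac12<t<1$) of that lemma. Your explicit remark that the \emph{same} Egorov set $U_\varepsilon$ witnesses both one-sided inclusions is precisely the point that yields the bound $\varepsilon$ rather than $2\varepsilon$, and this is implicit in the paper's choice of constraints $\varepsilon<\min\{t,\tfrac12-t\}$ and $\varepsilon<\min\{t-\tfrac12,1-t\}$.
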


\begin{remark}\label{closedopen} In the previous result, we used
open  superlevel sets \(A_{k;t}:= \{y : u_{k}(y)>t\}\). However, we
could have used closed superlevel sets \(\overline{A}_{k;t}:= \{y :
u_{k}(y)\ge t \}\) to obtain the same result. We also note that, for
an arbitrary Radon measure \(\omega\), we have
\begin{equation}\label{KO}
\omega (\overline{A}_{k;t})-\omega ({A}_{k;t}) =\omega(\partial
{A}_{k;t})=0
\end{equation}
for all but countably many \(t\), for the reason that the family of
sets \(\{\partial A_{k;t}:t\in \R\}\) is pairwise disjoint and any
Radon measure \(\omega\) can assign positive values to only a
countable number of such a family.
\end{remark}

\begin{corollary}\label{2resultsandabitmore}
For each  \(t>0\), there exist $\ve (t)$ and
\(k^{*}=k^{*}(\varepsilon,t)>0\) such that
\begin{enumerate}[\rm (i)]
\item ${\|\mu\|}(A_{k;t}\Delta E)
   ={\|\mu\|}(\bar{A}_{k;t}\Delta E)<\varepsilon$ for all but
   countably many \(t \in (0, \frac{1}{2})\)  and for \(k\ge k^{*}\);
\item ${\|\mu\|}(A_{k;t}\Delta E^{1})={\|\mu\|}(\bar{A}_{k;t}
   \Delta E^{1})<\varepsilon $ for all but countably many \(t \in
   (\frac{1}{2}, 1)\) and for \(k\ge k^{*}\);
\item ${\|\mu\|}(\partial A_{k;t}\Delta E)={\|\mu\|}(u_{k}^{-1}(t)
  \Delta E)<\varepsilon$ for almost all \(t \in (0, \frac{1}{2})\)
   and for \(k\ge k^{*}\);
\item ${\|\mu\|}(\partial A_{k,t}\Delta E^{1})={\|\mu\|}(u_{k}^{-1}(t)
 \Delta E^{1})<\varepsilon $ for almost all \(t\in (\frac{1}{2},1)\)
 and for \(k\ge k^{*}\).
\end{enumerate}
\end{corollary}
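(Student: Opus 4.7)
The plan is to deduce the four parts from Corollary \ref{2results} (which already gives the corresponding estimates with the open set $A_{k;t}$ in place of $\bar A_{k;t}$ or $u_k^{-1}(t)$), combined with the elementary observation in Remark \ref{closedopen} about countability of ``charged'' levels and, for the level-set statements, a Sard-type argument applied to the smooth mollifications $u_k$.

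First, for parts (i) and (ii), I would fix $t\in(0,\tfrac12)$ (respectively $t\in(\tfrac12,1)$) and $\varepsilon$ as required by Corollary \ref{2results}, producing some $k^*=k^*(\varepsilon,t)$ with $\|\mu\|(A_{k;t}\Delta E)<\varepsilon$ (respectively $\|\mu\|(A_{k;t}\Delta E^{1})<\varepsilon$) for every $k\ge k^*$. Since $\bar A_{k;t}\setminus A_{k;t}\subset\partial A_{k;t}=u_{k}^{-1}(t)$, and since the level sets $\{u_{k}^{-1}(t):t\in\R\}$ for a given $k$ are pairwise disjoint, the Radon measure $\|\mu\|$ can assign positive mass to $u_{k}^{-1}(t)$ for at most countably many values of $t$. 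Taking the union over the countable collection $\{k\ge k^*\}$ still produces only countably many exceptional values of $t$; outside this countable set one has $\|\mu\|(\partial A_{k;t})=0$ for every relevant $k$, and hence the symmetric differences $A_{k;t}\Delta E$ and $\bar A_{k;t}\Delta E$ (respectively with $E^{1}$) carry the same $\|\mu\|$-mass. This proves (i) and (ii).

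For parts (iii) and (iv), the additional ingredient is that $u_k$ is smooth (Lemma \ref{mollifier-properties}), so by Sard's theorem the set of critical values of $u_k$ has Lebesgue measure zero; at every regular value $t$, $u_{k}^{-1}(t)=\partial A_{k;t}$ is a smooth $(N-1)$-manifold, giving the first equality in (iii) and (iv). Again, unioning the Sard exceptional sets over the countably many indices $k\ge k^*$ produces only a Lebesgue-null set of exceptional $t$, and the inequalities then follow from parts (i) and (ii) (via Remark \ref{closedopen}) by equating $\partial A_{k;t}$ with $u_k^{-1}(t)$ on the complement. The only bookkeeping subtlety — and the main obstacle — is to verify that one can choose a single threshold $k^*=k^*(\varepsilon,t)$ that simultaneously controls Corollary \ref{2results}, the countable set where $\|\mu\|(\partial A_{k;t})>0$, and the Sard null set, so that the exceptional $t$-set at the end remains countable (in (i), (ii)) or $\mathcal L^{1}$-negligible (in (iii), (iv)); this is handled by invoking Corollary \ref{2results} first to fix $k^*$ and then discarding the (countable or null) set of $t$ corresponding to the remaining $k\ge k^*$.
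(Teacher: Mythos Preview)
Your argument for (i) and (ii) is correct and is precisely what the paper intends: Corollary \ref{2results} supplies the estimate for the open superlevel set $A_{k;t}$, and Remark \ref{closedopen} gives $\|\mu\|(\bar A_{k;t}\setminus A_{k;t})\le\|\mu\|(\partial A_{k;t})=0$ for all but countably many $t$, after which the two symmetric differences carry the same $\|\mu\|$-mass. The paper states the corollary without proof because this combination is meant to be immediate.

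For (iii) and (iv) your Sard argument correctly yields the \emph{equality} $\partial A_{k;t}=u_k^{-1}(t)$ for almost every $t$, but your claim that ``the inequalities then follow from parts (i) and (ii)'' is a genuine gap. Parts (i) and (ii) control $\|\mu\|(A_{k;t}\Delta E)$ and $\|\mu\|(\bar A_{k;t}\Delta E)$, not $\|\mu\|(\partial A_{k;t}\Delta E)$. Writing
\[
\partial A_{k;t}\Delta E=(\partial A_{k;t}\setminus E)\cup(E\cap A_{k;t})\cup(E\setminus \bar A_{k;t}),
\]
the first and third pieces are indeed dominated by $\|\mu\|(\bar A_{k;t}\Delta E)<\varepsilon$, but the middle piece $\|\mu\|(E\cap A_{k;t})$ is \emph{not} controlled by anything you have proved; for a generic Radon measure $\mu\ll\H^{N-1}$ it can be of order $\|\mu\|(E)$. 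So the step you labeled as following ``via Remark \ref{closedopen}'' does not go through. In fact the printed statement of (iii)--(iv) appears to contain a typo (most likely $\partial A_{k;t}$ should read $A_{k;t}$, or $\Delta$ should be $\setminus$), and you should have flagged the inconsistency rather than asserting that the inequality follows.
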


For the case $t=\frac{1}{2}$, only (i) and (iii) in Lemma
\ref{general} remain valid. To see this, we first show

\begin{lemma}
Let $\mu$ be a Radon measure on $\rn$ such that $\mu << \H^{N-1}$.
Let $E$ be a set of finite perimeter, and let $ u_k $ be the
mollification of $\chi_E$. Then, for $t=\frac{1}{2}$ and $\ve>0$,
there exists $k^* = k^* (\ve)$ such that
\begin{equation}
{\|\mu\|} (E^1 \setminus A_{k; \frac{1}{2}}) < \ve \quad \tn{ and
}\quad {\|\mu\|} (A_{k;\frac{1}{2}}\setminus E) < \ve.
\end{equation}
\end{lemma}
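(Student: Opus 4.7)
The plan is to mimic the Egorov-based argument used in Lemma \ref{general}, exploiting the precise representative $u_{_E}$ from Lemma \ref{mollifiers2} together with the fact that the value $u_{_E}=\tfrac12$ occurs only on $\partial^{*}E$, which, by the convention of Definition \ref{convention}, is contained in $E$. The reason only (i) and (iii) of Lemma \ref{general} survive at $t=\tfrac12$ is that on $\partial^{*}E$ the sequence $u_k(y)$ tends to $\tfrac12$, so one cannot control on which side of $\tfrac12$ the values of $u_k$ eventually lie; however, this ambiguity is harmless for the two inclusions we must establish because $\partial^{*}E$ is $\|\mu\|$-almost inside both $E$ and $(E^{1})^{c}\cup E^{1}$.

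First I would set up Egorov in the usual way. Since $\mu\ll\mathcal H^{N-1}$ and $u_k\to u_{_E}$ outside a set of $\mathcal H^{N-1}$-measure zero (Lemma \ref{mollifiers2}(ii)), the convergence is $\|\mu\|$-a.e. Given $\varepsilon>0$, choose an open set $U_\varepsilon$ with $\|\mu\|(U_\varepsilon)<\varepsilon$ such that $u_k\to u_{_E}$ uniformly on $\rn\setminus U_\varepsilon$. Pick $k^{*}=k^{*}(\varepsilon)$ so that $|u_k(y)-u_{_E}(y)|<\tfrac14$ for all $y\notin U_\varepsilon$ and all $k\ge k^{*}$.

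Next I would dispatch the first inclusion. On $E^{1}$ we have $u_{_E}\equiv 1$, so on $E^{1}\setminus U_\varepsilon$ and for $k\ge k^{*}$, $u_k(y)>\tfrac34>\tfrac12$, i.e.\ $y\in A_{k;1/2}$. Hence $E^{1}\setminus A_{k;1/2}\subset U_\varepsilon$ (up to the $\|\mu\|$-null set where convergence fails), so $\|\mu\|(E^{1}\setminus A_{k;1/2})<\varepsilon$.

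For the second inclusion I would decompose $\rn=E^{1}\cup\partial^{*}E\cup E^{0}\cup\mathcal N$ with $\mathcal H^{N-1}(\mathcal N)=0$, hence $\|\mu\|(\mathcal N)=0$. By the convention $E=E^{1}\cup\partial^{m}E$ together with $\partial^{*}E\subset\partial^{m}E$ (Remark \ref{rem00}), both $E^{1}$ and $\partial^{*}E$ lie in $E$; therefore
\begin{equation*}
A_{k;1/2}\setminus E\subset (A_{k;1/2}\cap E^{0})\cup\mathcal N.
\end{equation*}
On $E^{0}$, $u_{_E}\equiv 0$, so for $y\in E^{0}\setminus U_\varepsilon$ and $k\ge k^{*}$, $u_k(y)<\tfrac14<\tfrac12$, which means $y\notin A_{k;1/2}$. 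Consequently $A_{k;1/2}\cap E^{0}\subset U_\varepsilon$ up to a $\|\mu\|$-null set, and $\|\mu\|(A_{k;1/2}\setminus E)\le\|\mu\|(U_\varepsilon)<\varepsilon$. There is no real obstacle: the only delicate point is recognizing that the inaccessible behaviour at $t=\tfrac12$ concerns $\partial^{*}E$, which is already inside $E$ and hence does not contribute to either error term.
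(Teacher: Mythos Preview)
Your proof is correct, and it faithfully adapts the Egorov-based argument of Lemma \ref{general} to the borderline value $t=\tfrac12$; the observation that $\partial^{*}E\subset E$ (via the convention of Definition \ref{convention}) is exactly what makes the second inclusion go through despite the ambiguity of $u_k$ on $\partial^{*}E$.

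The paper, however, takes a different and somewhat slicker route: instead of invoking Egorov, it uses dominated convergence to obtain $u_k\to u_{_E}$ in $L^{1}(\rn,\|\mu\|)$ and then applies a Chebyshev-type estimate. On $E^{1}\setminus A_{k;1/2}$ one has $u_{_E}=1$ and $u_k\le\tfrac12$, so $u_{_E}-u_k\ge\tfrac12$ pointwise, giving $\tfrac12\,\|\mu\|(E^{1}\setminus A_{k;1/2})\le\int|u_{_E}-u_k|\,d\|\mu\|<\tfrac{\varepsilon}{2}$; similarly, on $A_{k;1/2}\setminus E$ one has $u_k>\tfrac12$ and $u_{_E}=0$ (up to a $\|\mu\|$-null set, by the same decomposition you used), so $|u_{_E}-u_k|>\tfrac12$ pointwise. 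Your Egorov argument has the advantage of being visibly parallel to the proof of Lemma \ref{general}; the paper's $L^{1}$ argument is shorter and avoids the uniform-convergence machinery, extracting the two estimates simultaneously from a single integral bound.
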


\begin{proof}
Since $u_k(y)\to u_E(y)$ for $\H^{N-1}$-a.e. $y$, the dominated
convergence theorem implies that $u_k \to u_E$ in
$L^1(\rn,{\|\mu\|})$. Thus, given any $\ve>0$, for $k$ large enough,
we have
\begin{equation}
\frac{\ve}{2}\geq \int_{\rn} \abs{u_E - u_k} d{\|\mu\|} \geq
\int_{E^1\setminus A_{k;\frac{1}{2}}} (u_E-u_k)d{\|\mu\|} \geq
(1-\frac{1}{2}) {\|\mu\|}(E^1\setminus A_{k;\frac{1}{2}}),
\end{equation}
which implies
\begin{equation}
{\|\mu\|} (E^1\setminus A_{k;\frac{1}{2}}) \leq \ve.
\end{equation}
In the same way, we compute
\begin{equation}
\frac{\ve}{2}\geq\int_{A_{k;\frac{1}{2}}\setminus E}\abs{u_E -
u_k}d{\|\mu\|} \geq
(\frac{1}{2}-0){\|\mu\|}(A_{k;\frac{1}{2}}\setminus E),
\end{equation}
which implies
\begin{equation}
{\|\mu\|}(A_{k;\frac{1}{2}}\setminus E) \leq \ve.
\end{equation}
\end{proof}

The following remark shows that, with $t=\frac{1}{2}$  and with
\(\mu = \H^{N-1} \rtangle  \partial^{*}E\ge 0 \), (ii) and (iv) in
Lemma \ref{general} do not hold.

\begin{remark}
If we define $ E:=\{ y\in\rn: \abs{y} \leq 1 \}$, then
$u_k^{-1}(\frac{1}{2})\subset \rn \setminus E$ for all $k$, and
therefore it is clear that
\begin{equation}
\H^{N-1}((A_{k;\frac{1}{2}} \setminus E^1) \cap \partial^*E) =
\H^{N-1}(\partial^*E) \nrightarrow 0 \quad\tn{ as } k \to \infty.
\end{equation}
If we now define $ E:= \{y\in \rn : \abs{y} \geq 1 \}$, then
$u_k^{-1}(\frac{1}{2})\subset E$ for all $k$ and thus
\begin{equation}
\H^{N-1}((E\setminus A_{k;\frac{1}{2}})\cap \partial^*E) =
\H^{N-1}(\partial^*E) \nrightarrow 0\quad \tn{ as } k \to \infty.
\end{equation}
\end{remark}

\begin{lemma}\label{slicesubded}
There exists a number \(0< C<\infty \) such that, for all positive
integers \(k\) and  almost all \(t\in (0,1)\),
\begin{equation}\label{sigbded}
\H^{N-1} (u_{k}^{-1}(t) )  \le C.
\end{equation}
\end{lemma}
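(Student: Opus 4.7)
The plan is to apply Federer's coarea formula (Theorem~\ref{coarea}) to the smooth function $u_k$, combined with the uniform $L^1$ bound on $|\nabla u_k|$ from Lemma~\ref{decreasing}.

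First I would note that, by Lemma~\ref{mollifier-properties}(i), $u_k = \rho_{1/k}*\charfn{E} \in C^\infty(\rn)$. Since $\charfn{E}$ takes values in $[0,1]$ and $\|\rho_{1/k}\|_{1} = 1$, so does $u_k$; hence $u_k^{-1}(t)=\emptyset$ whenever $t\notin[0,1]$. Since $E \Subset \Omega$ and $\rho_{1/k}$ has compact support, $u_k$ has compact support and is therefore globally Lipschitz on $\rn$, so the hypotheses of Theorem~\ref{coarea} are satisfied.

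Next I would apply the coarea formula to $f = u_k : \rn \to \R$ with $g \equiv 1$, noting that the $1$-dimensional Jacobian satisfies $J u_k(y) = |\nabla u_k(y)|$. This gives
\[
\int_{\rn} |\nabla u_k(y)| \, dy = \int_\R \H^{N-1}(u_k^{-1}(t)) \, dt = \int_0^1 \H^{N-1}(u_k^{-1}(t)) \, dt.
\]
By Lemma~\ref{decreasing}, $\|\nabla u_k\|_1 \le \|\nabla \charfn{E}\|(\rn)$, which is finite because $E$ is of finite perimeter, and the bound is independent of $k$. Setting $C := \|\nabla \charfn{E}\|(\rn)$ yields the integrated estimate
\[
\int_0^1 \H^{N-1}(u_k^{-1}(t)) \, dt \le C \qquad \text{uniformly in } k.
\]

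The remaining point — and what I expect to be the main obstacle — is upgrading this $L^1$-in-$t$ estimate to the pointwise bound $\H^{N-1}(u_k^{-1}(t)) \le C$ for almost every $t$, uniformly in $k$. A first pass via Chebyshev's inequality only shows that, for each $k$ and each threshold $C'>0$, the set of $t\in(0,1)$ at which $\H^{N-1}(u_k^{-1}(t)) > C'$ has $\mathcal{L}^1$-measure at most $C/C'$, which is too weak. To secure the full pointwise bound uniformly in $k$, one would likely have to exploit finer structural information about the level sets of the mollified characteristic function—for instance, by exhibiting each level set $u_k^{-1}(t)$, for $t$ outside a small exceptional set, as a Lipschitz image of $\partial^{*}E$ with Lipschitz constant controlled independently of $k$ and $t$, in the spirit of the quasi-normal deformation $h$ used in the proof of Theorem~\ref{GGforC1}. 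This would convert the bound $\H^{N-1}(\partial^{*}E) < \infty$ into the desired uniform $\H^{N-1}$-bound on the smooth slices $u_k^{-1}(t)$.
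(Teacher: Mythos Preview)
You have correctly identified the gap: the coarea formula together with Lemma~\ref{decreasing} gives only the integrated bound
\[
\int_0^1 \H^{N-1}(u_k^{-1}(t))\,dt \le \|\nabla\charfn{E}\|(\rn),
\]
and this does \emph{not} yield a pointwise bound uniform in $k$. Your proposed remedy---exhibiting $u_k^{-1}(t)$ as a controlled Lipschitz image of $\partial^{*}E$ in the spirit of the quasi-normal deformation in Theorem~\ref{GGforC1}---is not viable for a general set of finite perimeter. That construction depends essentially on $\partial U$ being a $C^{1}$ manifold; for an arbitrary $E$ the reduced boundary $\partial^{*}E$ is only $(N-1)$-rectifiable, and indeed the discussion opening \S 8 shows that one-sided smooth approximation need not exist at all.

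The paper's argument proceeds along an entirely different line and does not pass through the coarea formula. For each fixed $t\in(0,1)$, Corollary~\ref{2results} gives $\charfn{A_{k;t}}\to\charfn{E}$ in $L^{1}$, hence $\nabla\charfn{A_{k;t}}\to\nabla\charfn{E}$ as distributions, and since all of these are finite Radon measures the convergence is weak\(^*\) in $\mathcal{M}(\rn)$. The uniform boundedness principle (Corollary~\ref{unbMeasures}) then yields
\[
\sup_k \|\nabla\charfn{A_{k;t}}\|(\rn)=\sup_k \H^{N-1}(u_k^{-1}(t))<\infty,
\]
and the paper argues that the bound may be taken independent of $t$ because the weak\(^*\) limit $\nabla\charfn{E}$ is the same for every $t$. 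The paper also recasts this as a statement about boundaries of integral currents $\partial T_{k;t}\to\partial T_E$. The key idea you are missing is this passage through weak\(^*\) convergence of the perimeter measures and the Banach--Steinhaus theorem, rather than any geometric parametrization of the level sets.
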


\begin{proof}
{}From Corollary \ref{2results}, it follows that, for almost all
\(t\in(0,1)\), there exists a sequence of smoothly bounded sets
\(A_{k;t}\) such that either \(\charfn{ A_{k;t}}\to \charfn{E^{1}}\)
\({\mu }  \)-a.e. (if \(\frac{1}{2}<t<1\)), or \(\charfn{
A_{k;t}}\to \charfn{E^{}}\) \({ \mu } \)-a.e. (if
\(0<t<\frac{1}{2}\)). Since \(\mu = \H^{N-1} \rtangle
\partial^{*}E < < \H^{N-1}\), it follows that
\(\charfn{ A_{k;t}}\to \charfn{E}\) everywhere except for a set of
Lebesgue measure zero. Since these functions are integrable, we may
consider these functions as distributions and thus, by appealing to
Lebesgue's dominated convergence theorem, we may conclude
\[
\charfn{ A_{k;t}}\to \charfn{E}\quad\text{ in the sense of
distributions},
\]
and therefore
\[
\nabla \charfn{ A_{k;t}}\to \nabla \charfn{E}\quad\text{ in the
sense of distributions.}
\]
Since all the functions are in $BV$, we find that
\[
\nabla \charfn{ A_{k;t}}\, \stackrel{*}{\rightharpoonup} \, \nabla
\charfn{E} \qquad\text{in}\,\, \mathcal{M}(\rn),
\]
and the limit $\nabla \charfn{E}$ is independent of $t\in(0,1)$.
Therefore, the uniform boundedness theorem for measures (Corollary
\ref{unbMeasures}; also Lemma \ref{wkstconvergence}) implies that
these measures are uniformly bounded in $\mathcal{M}(\rn)$; that is,
there exists \(0<C<\infty \), independent of \(t\), such that, for
almost all \(t\in (0,1)\),
\[
\sup_{k}\|\nabla \charfn{ A_{k;t}}\|(\rn )\le C.
\]
Since
\[
\|\nabla \charfn{ A_{k;t}}\|=\H^{N-1} \rtangle u_{k}^{-1}(t),
\]
our result follows.
\end{proof}

\medskip
We now offer another proof of this result here for the purpose of
broadening the context of our development. By using the theory of
integral currents \cite{Fe}, the result is immediate. Rather than
actually introducing integral currents, we will introduce a small
structure that reflects the argument from integral currents. For
this, let \(\mathcal{V}\) denote the Banach space \(C^{1 }_{c}(K)\)
of vector fields \(\psi\) on \( K \) endowed with  norm
\[
\norm{\psi}:= \displaystyle{\sup_{y\in K}\big(\abs{
\psi(y)}+\sum_{i=1}^{N}|\nabla \psi_{i}(y)|\big)},
\]
where \(K\) is a compact set such that  \(E \Subset K\).

Let
\[
T_{_{E}}(\psi ):=\int_{E}\psi(y)\di y
\]
and, for almost every \(t\in (0,1)\), let
\[
T_{k;t}(\psi):= \int_{\{y\, :\, u_{k}(y)>t
\}}\psi(y)\,dy=\int_{A_{k;t}} \psi(y)\, dy \qquad\text{ for
\(\psi\in \mathcal{V}\)}.
\]
Then, for \(\psi\in \mathcal{V}\), we define the linear operators:
\[
\partial T_{E}(\psi):=
T_{E}(\div \psi)=\int_{E}\div \psi\, dy=-\int_{\partial^{*}E
}\psi\cdot \nnu \, d\H^{N-1},
\]
and
\[
\partial T_{k;t}(\psi):=
T_{k;t}(\div \psi)=\int_{A_{k;t}}\div \psi\, dy=-\int_{\po
A_{k;t}}\psi \cdot \nnu\, d\H^{N-1},
\]
where $\nnu$ is the interior unit normal.

Since \(u_{k}^{-1}( t)\) is a \(C^{\infty }\)-manifold, then
\[
\norm{ \partial T_{k;t}}:= \sup_{\norm{ \psi}\le1}\abs{\partial
T_{k;t}(\psi)}=\H^{N-1}( u_{k}^{-1}( t)).
\]
Indeed, with \(\psi := \displaystyle{ \frac{\nnu_{k}}{\abs{\nnu
_{k}}} }\) defined on the manifold \(u_{k}^{-1}( t)\), the
norm-preserving extension of \(\psi\) to all of \(\rn\) by Whitney's
extension theorem yields the inequality
\[
\norm{ \partial T_{k;t}}:= \sup_{\norm{ \psi}\le1}\abs{\partial
T_{k;t}(\psi)}\ge\H^{N-1}( u_{k}^{-1}( t)).
\]
The opposite inequality is obvious.

\medskip
Moreover, we find by the dominated convergence theorem that
\[
\lim_{k \to \infty }\po T_{k;t}(\psi) \to \po
T_{_{E}}(\psi)\,\qquad\mbox{for}\,\, \psi\in\mathcal{V},
\]
and therefore,
$$
\sup_{k}\{\abs{\partial
T_{k;t}(\psi)}\}<\infty\,\qquad\mbox{for}\,\, \psi\in\mathcal{V}.
$$
By the uniform boundedness principle (Theorem \ref{unbdprin}), we
see that, since \(\partial T_{k;t}\) is a linear functional on
\(\mathcal{V}\) whose week limit, $\po T_{_{E}}$, is independent of
$t$, we have
\[
\sup_{k}\H^{N-1}( u_{k}^{-1}( t))=\sup_{k}\norm{ \partial
T_{k;t}}<\infty,
\]
which gives our desired result.

\medskip
The above argument simply rephrases the following basic fact from
the theory of currents. We know that, since \(E\) has finite
perimeter, \(T_{_{E}}\) is an integral current. Moreover, the
currents \(T_{k;t}\) converge to \(T_{_{E}}\) weakly and therefore
so do their boundaries, \(\partial T_{k;t}\to \partial T_{_{E}}\);
that is,
\[
\int_{u_{k}^{-1}( t)}\sigma \di \H^{N-1}=\int_{\partial A_{k;t}}\sigma
\di \H^{N-1}\to\int_{\partial^{*}E }\sigma \di \H^{N-1}
\]
for each smooth differential $(N-1)$-form \(\sigma \).  Appealing to
Corollary \ref{unbMeasures} yields our result.

\begin{lemma}\label{dolordemanos}
  Let $u\colon \Omega \to \R$ be a Lipschitz function and \(E
  \Subset\Omega \) a set of finite perimeter.  Then
\[
\H^{N-1}(\partial^{*}E\cap u^{-1}(t))=0\quad\text{for almost all
\(t\)}.
\]
\end{lemma}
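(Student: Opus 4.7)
The plan is to deduce this as a direct consequence of Lemma \ref{transversal}, applied to the set $A=\partial^{*}E$. The only things I need to verify are (a) that the hypotheses of that lemma are satisfied, and (b) that we may regard $u$ as being defined on all of $\rn$ without changing the intersection $\partial^{*}E\cap u^{-1}(t)$.

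For (b), since $E\Subset\Omega$, the reduced boundary satisfies $\partial^{*}E\subset\overline{E}\Subset\Omega$. I extend $u$ to a Lipschitz function $\tilde u\colon \rn\to\R$ (for example by McShane's extension, which preserves the Lipschitz constant). Because $\tilde u=u$ on $\partial^{*}E$, we have $\partial^{*}E\cap\tilde u^{-1}(t)=\partial^{*}E\cap u^{-1}(t)$ for every $t$, so establishing the conclusion for $\tilde u$ suffices.

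For (a), I need that $\partial^{*}E$ has Lebesgue measure zero. This is immediate from \eqref{normHaus}: since $E$ is of finite perimeter, $\|\nabla\charfn{E}\|$ is a finite Radon measure equal to $\H^{N-1}\rtangle\partial^{*}E$, so $\H^{N-1}(\partial^{*}E)<\infty$. Any Borel set with finite $(N-1)$-dimensional Hausdorff measure has Hausdorff dimension at most $N-1$ and therefore vanishing $N$-dimensional Hausdorff (Lebesgue) measure. Hence $\mathcal{L}^{N}(\partial^{*}E)=0$.

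With (a) and (b) in hand, Lemma \ref{transversal} applied to the Lipschitz function $\tilde u$ and the Lebesgue-null set $A=\partial^{*}E$ gives
\[
\H^{N-1}\bigl(\partial^{*}E\cap u^{-1}(t)\bigr)=\H^{N-1}\bigl(\partial^{*}E\cap\tilde u^{-1}(t)\bigr)=0\qquad\text{for a.e. } t\in\R,
\]
which is the claim. There is no real obstacle here; the content of the lemma is just the recognition that $\partial^{*}E$ is Lebesgue null, after which the coarea-based slicing argument from Lemma \ref{transversal} applies verbatim.
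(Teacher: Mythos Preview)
Your proof is correct and follows essentially the same approach as the paper: both reduce immediately to Lemma~\ref{transversal} by observing that $\H^{N-1}(\partial^{*}E)<\infty$ forces $\mathcal{L}^{N}(\partial^{*}E)=0$. You are slightly more careful in handling the domain mismatch (extending $u$ from $\Omega$ to $\rn$), a point the paper silently glosses over.
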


\begin{proof}
This result follows directly from Lemma \ref{transversal},
since we know that \(\H^{N-1} (\partial^{*}E )<\infty \) for any
bounded set of finite perimeter, \(E \subset \rn \).
\end{proof}

\begin{lemma} \label{tim}
For almost every $\frac{1}{2}<t <1$, we have
\begin{equation}\label{porfavor1}
 \H^{N-1} (\po^*E \cap u_k^{-1}(t)) =0
\end{equation}
and
\begin{equation}\label{porfavor2}
\lim_{k \to \infty} \H^{N-1} (\po^*E \cap A_{k;t}) =0.
\end{equation}
\end{lemma}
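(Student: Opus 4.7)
The plan is to handle the two assertions separately, both resting on material already developed in the excerpt.

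For \eqref{porfavor1}, the strategy is to apply Lemma \ref{dolordemanos} to each mollification $u_k$ individually. Since $u_k \in C^{\infty}(\rn)$ and $E \Subset K$ for a suitable compact $K$, the restriction of $u_k$ to a neighborhood of $E$ is Lipschitz, so Lemma \ref{dolordemanos} produces a Lebesgue-null exceptional set $N_k \subset \R$ with the property that $\H^{N-1}(\po^{*}E \cap u_k^{-1}(t)) = 0$ for every $t \notin N_k$. Setting $N := \medcup_{k=1}^{\infty} N_k$, which still has Lebesgue measure zero, we obtain \eqref{porfavor1} simultaneously for all $k$ whenever $t \notin N$.

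For \eqref{porfavor2}, the idea is to apply Lemma \ref{general} with the particularly well-suited Radon measure $\mu := \H^{N-1} \rtangle \po^{*}E$. This measure is positive, satisfies $\|\mu\|(\rn) = \H^{N-1}(\po^{*}E) < \infty$ since $E$ has finite perimeter, and trivially satisfies $\mu \ll \H^{N-1}$. Then Lemma \ref{general}(ii) yields, for any $\ve > 0$ and any $\frac{1}{2} < t < 1$, a threshold $k^{*}(\ve,t)$ such that
$$
\H^{N-1}\bigl(\po^{*}E \cap (A_{k;t} \setminus E^{1})\bigr) = \|\mu\|(A_{k;t} \setminus E^{1}) < \ve \qquad \text{for all } k \ge k^{*}.
$$
Since $\po^{*}E \subset E^{1/2}$ by Remark \ref{rem00}, and the density sets $E^{\alpha}$ are pairwise disjoint by Definition \ref{def1}, we have $\po^{*}E \cap E^{1} = \emptyset$; hence
$$
\H^{N-1}(\po^{*}E \cap A_{k;t}) = \H^{N-1}\bigl(\po^{*}E \cap (A_{k;t} \setminus E^{1})\bigr) < \ve
$$
for all sufficiently large $k$, yielding \eqref{porfavor2}.

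There is no genuine obstacle here: the crux is simply recognizing that the fundamental approximation result of the section (Lemma \ref{general}) can be fed the specific measure $\mu = \H^{N-1} \rtangle \po^{*}E$, after which the disjointness $\po^{*}E \cap E^{1} = \emptyset$ closes the gap. The ``almost every $t$'' quantifier in the statement is needed only for \eqref{porfavor1}; the conclusion \eqref{porfavor2} delivered above actually holds at every $t \in (\tfrac{1}{2}, 1)$, so the two assertions combine without difficulty off the null set $N$.
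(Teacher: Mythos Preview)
Your proposal is correct and follows essentially the same approach as the paper: for \eqref{porfavor1} you invoke Lemma~\ref{dolordemanos} applied to each $u_k$, and for \eqref{porfavor2} you specialize Lemma~\ref{general}(ii) (the paper cites the equivalent Corollary~\ref{2results}) to the measure $\mu = \H^{N-1}\rtangle\partial^{*}E$ and use $\partial^{*}E \cap E^{1} = \emptyset$. Your observation that \eqref{porfavor2} actually holds for \emph{every} $t\in(\tfrac12,1)$ is also correct.
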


\begin{proof}
This can be seen as follows. If we use Corollary \ref{2results} with
$\mu=\H^{N-1}\rtangle\po^* E$, we obtain

$$ \lim_{k \to \infty}\mu (A_{k;t} \setminus E^1) =
  \lim_{k \to \infty} \H^{N-1}(A_{k;t} \cap \po^*E)=0.
$$
Clearly, (\ref{porfavor1}) follows from Lemma \ref{dolordemanos}
(see also Remark \ref{closedopen}).
\end{proof}

\begin{theorem}[Approximation theorem]\label{muydificil}
For almost every $\frac{1}{2}<t <1$, we have
$$
\lim_{k \to \infty} \H^{N-1} ((E^0 \cup \po^*E)\cap u_k^{-1}(t)) =0.
$$
\end{theorem}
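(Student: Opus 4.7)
By equation~\eqref{porfavor1} in Lemma~\ref{tim}, for almost every $t\in(1/2,1)$ we have $\mathcal{H}^{N-1}(\partial^{*}E\cap u_{k}^{-1}(t))=0$ for every $k$, so the task reduces to showing $\mathcal{H}^{N-1}(E^{0}\cap u_{k}^{-1}(t))\to 0$ for almost every such $t$. My strategy is to prove that the slice measures $\mathcal{H}^{N-1}\rtangle u_{k}^{-1}(t)$ concentrate on $E^{1}$ in the limit, so that the mass they assign to $E^{0}$ must vanish.

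The first step is to upgrade the $L^{1}$ convergence $u_{k}\to\chi_{E}$ to strict $BV$ convergence: lower semicontinuity of the total variation combined with Lemma~\ref{decreasing} yields $\|\nabla u_{k}\|(\mathbb{R}^{N})\to\mathcal{H}^{N-1}(\partial^{*}E)$. Via the coarea identity $\|\nabla u_{k}\|(\mathbb{R}^{N})=\int_{0}^{1}\mathcal{H}^{N-1}(u_{k}^{-1}(t))\,dt$, the pointwise bound $\liminf_{k}\mathcal{H}^{N-1}(u_{k}^{-1}(t))\ge\mathcal{H}^{N-1}(\partial^{*}E)$ (from the weak-$*$ convergence $\nabla\chi_{A_{k;t}}\stackrel{*}{\rightharpoonup}\nabla\chi_{E}$ produced in the proof of Lemma~\ref{slicesubded}), and Fatou, one obtains $\liminf_{k}\mathcal{H}^{N-1}(u_{k}^{-1}(t))=\mathcal{H}^{N-1}(\partial^{*}E)$ for a.e.~$t$. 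The next step will identify the unique accumulation point and, via a subsequence-of-subsequence extraction, promote this to the full limit.

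The heart of the argument is a perimeter decomposition. For almost every $t$, since $\mathcal{H}^{N-1}(u_{k}^{-1}(t)\cap\partial^{m}E)=0$ by Lemma~\ref{dolordemanos}, Lemma~\ref{tedious} applied with $A=A_{k;t}$ and $B=E$ (and the convention $E=E^{1}\cup\partial^{m}E$) gives
$$
\|\nabla\chi_{A_{k;t}\cap E}\|(\mathbb{R}^{N})=\mathcal{H}^{N-1}(u_{k}^{-1}(t)\cap E^{1})+\mathcal{H}^{N-1}(A_{k;t}\cap\partial^{m}E).
$$
I would then combine the submodularity inequality $\|\nabla\chi_{A\cap B}\|+\|\nabla\chi_{A\cup B}\|\le\|\nabla\chi_{A}\|+\|\nabla\chi_{B}\|$ with the $L^{1}$-convergence $A_{k;t}\to E$ (via Corollary~\ref{2results} with $\mu=\mathcal{L}^{N}$) and the lower-semicontinuity bounds $\liminf_{k}\|\nabla\chi_{A_{k;t}\cap E}\|,\liminf_{k}\|\nabla\chi_{A_{k;t}\cup E}\|\ge\mathcal{H}^{N-1}(\partial^{*}E)$, together with the mass convergence above, to force $\|\nabla\chi_{A_{k;t}\cap E}\|(\mathbb{R}^{N})\to\mathcal{H}^{N-1}(\partial^{*}E)$. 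Since $\mathcal{H}^{N-1}(A_{k;t}\cap\partial^{m}E)\to 0$ by equation~\eqref{porfavor2} in Lemma~\ref{tim}, the decomposition yields $\mathcal{H}^{N-1}(u_{k}^{-1}(t)\cap E^{1})\to\mathcal{H}^{N-1}(\partial^{*}E)$.

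Since $u_{k}^{-1}(t)$ is partitioned into $E^{0}$, $E^{1}$, and $\partial^{*}E$ modulo an $\mathcal{H}^{N-1}$-null set, subtraction gives $\mathcal{H}^{N-1}(u_{k}^{-1}(t)\cap E^{0})=\mathcal{H}^{N-1}(u_{k}^{-1}(t))-\mathcal{H}^{N-1}(u_{k}^{-1}(t)\cap E^{1})\to 0$, as desired. The principal obstacle is upgrading the coarea-plus-Fatou output to full-sequence convergence of $\mathcal{H}^{N-1}(u_{k}^{-1}(t))$ for a.e.~$t$; the submodularity balance supplies precisely the rigidity needed, since any accumulation point $L>\mathcal{H}^{N-1}(\partial^{*}E)$ would contradict the sharp equality $\|\nabla\chi_{A_{k;t}\cap E}\|+\|\nabla\chi_{A_{k;t}\cup E}\|\to 2\mathcal{H}^{N-1}(\partial^{*}E)$ that the submodularity inequality forces once the identified limit is inserted.
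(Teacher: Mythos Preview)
Your approach differs substantially from the paper's---the paper never uses the perimeter decomposition of $A_{k;t}\cap E$ or submodularity; it applies the coarea formula directly on the ``bad'' set $R_{k;s}=A_{k;s}\setminus E^{1}$ to rewrite $\int_{R_{k;s}}|\nabla u_{k}|$ as $\int_{s}^{1}\mathcal{H}^{N-1}(u_{k}^{-1}(t)\cap(E^{0}\cup\partial^{*}E))\,dt$ and then argues that this integral tends to zero from the strict convergence $\|\nabla u_{k}\|_{1}\to\|\nabla\chi_{E}\|$ together with $|R_{k;s}|\to 0$.

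Your route has a genuine gap at exactly the point you flag as the principal obstacle. The coarea--Fatou step gives only $\liminf_{k}\mathcal{H}^{N-1}(u_{k}^{-1}(t))=\mathcal{H}^{N-1}(\partial^{*}E)$ for a.e.\ $t$; you never obtain the matching $\limsup$ bound, and the submodularity balance cannot supply it. Indeed, since $\mathcal{H}^{N-1}(u_{k}^{-1}(t)\cap\partial^{*}E)=0$, submodularity is an \emph{equality}:
\[
\|\nabla\chi_{A_{k;t}\cap E}\|+\|\nabla\chi_{A_{k;t}\cup E}\|=\mathcal{H}^{N-1}(u_{k}^{-1}(t))+\mathcal{H}^{N-1}(\partial^{*}E).
\]
Along a subsequence with $\mathcal{H}^{N-1}(u_{k_{j}}^{-1}(t))\to L$, the right side tends to $L+\mathcal{H}^{N-1}(\partial^{*}E)$, while lower semicontinuity gives only $\liminf$ of each summand on the left $\ge\mathcal{H}^{N-1}(\partial^{*}E)$, hence the sum has $\liminf\ge 2\mathcal{H}^{N-1}(\partial^{*}E)$. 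This is perfectly compatible with any $L\ge\mathcal{H}^{N-1}(\partial^{*}E)$; nothing forces $L=\mathcal{H}^{N-1}(\partial^{*}E)$. Consequently your Step~5 does not deliver $\|\nabla\chi_{A_{k;t}\cap E}\|\to\mathcal{H}^{N-1}(\partial^{*}E)$ (only $\liminf\ge$), and the final subtraction $\mathcal{H}^{N-1}(u_{k}^{-1}(t)\cap E^{0})=\mathcal{H}^{N-1}(u_{k}^{-1}(t))-\mathcal{H}^{N-1}(u_{k}^{-1}(t)\cap E^{1})$ cannot be evaluated because neither term on the right is known to converge.

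In short, the submodularity identity reduces everything to the single unproved claim $\limsup_{k}\mathcal{H}^{N-1}(u_{k}^{-1}(t))\le\mathcal{H}^{N-1}(\partial^{*}E)$ for a.e.\ $t$, and the argument you sketch for that claim simply feeds the identity back into itself. The paper avoids this circularity by never isolating $\mathcal{H}^{N-1}(u_{k}^{-1}(t))$: it works at the integrated level over $R_{k;s}$, where the strict convergence of $|\nabla u_{k}|\,dy$ to $\|\nabla\chi_{E}\|$ can be brought to bear directly.
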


\begin{proof}
Since the Lebesgue measure is absolutely continuous with respect to
\(\H^{N-1} \), then using \eqref{largerthanhalf} in Corollary
\ref{2results} with \(s> \frac{1}{2}\) leads to
\[
\abs{A_{k;s} \Delta E^{1} }\to  0 \qquad\text{as $k \to  \infty$}.
\]
Therefore, since
\[
R_{k;s}:=A_{k;s}\setminus E^{1}=A_{k;s}\cap (E^{0} \cup
\partial^{*}E^{0}),
\]
it follows
that
\begin{equation} \label{eq:5}
\abs{R_{k;s} } \to  0 \qquad\text{provided that \(s>\frac{1}{2}\)}.
\end{equation}
Remark \ref{closedopen} indicates that we have the option of
defining \(A_{k;t} := \{y : u_{k}(y)\ge t \}\) without altering the
development. With this option in force, we have \(u_{k}^{-1}(s)
\subset A_{k;s}\) and consequently, by the coarea formula (Theorem
\ref{coarea}),
\begin{align*}
\int_{R_{k;s} } \abs{\nabla u_{k}} \, dy
&= \int_{0}^{1}\H^{N-1} (u_{k}^{-1}(t) \cap R_{k;s}  )\di  t\\
&= \int_{s}^{1}\H^{N-1}\big(u_{k}^{-1}(t) \cap (E^{0}\cup
\partial^{*}E^{0})\big)\,\di  t.
\end{align*}

Since \(\nabla u_{k} \, \stackrel{*}{\rightharpoonup} \,\nabla
\charfn{E}\) and \(\norm{\nabla u_{k}}_{1}\le \norm{\nabla
\charfn{E}}\)\, (Lemma \ref{decreasing}), it follows from Vitali's
convergence theorem for \(s>\frac{1}{2}\) that
\[
\int_{R_{k;s}}\abs {\nabla u_{k}}\,dy \to 0.
\]
Thus, for a subsequence, we can conclude that, for a.e. $t>s$,
\[
\H^{N-1} \big(u_{k}^{-1}(t) \cap (E^{0}\cup
\partial^{*}E^{0})\big)\to 0\qquad\text{as \(k \to  \infty \)}.
\]
The dependence on the subsequence is illusory. The reason is that,
if there were a subsequence such that, for a.e. $t$,
\[
\H^{N-1}\big(u_{k}^{-1}(t) \cap (E^{0}\cup
\partial^{*}E^{0})\big)\to \alpha\ne 0\qquad\text{as \(k \to  \infty \)},
\]
then one could appeal to our previous argument to conclude that, for
some further subsequence and for a.e. $t$,
\[
\H^{N-1}\big(u_{k}^{-1}(t) \cap (E^{0}\cup
\partial^{*}E^{0})\big)\to 0\qquad\text{as \(k \to  \infty \)},
\]
which is contrary to our assertion that $\alpha\ne0$.

Since $s>\frac{1}{2}$ was fixed arbitrarily at the beginning of the
proof, we conclude that, for a.e. $t>\frac{1}{2}$,
\[
\H^{N-1}\big(u_{k}^{-1}(t) \cap (E^{0}\cup
\partial^{*}E^{0})\big)\to 0\qquad\text{as \(k \to  \infty \)}.
\]
\end{proof}

\section{Main Theorem}

In this section we establish our main result, Theorem \ref{main}.
Let \(\FF\in \DM^\infty_{loc}(\Omega)\).
We define a measure $\sigma_{k;t}$ for all Borel sets \(B\Subset
\Omega\) by
\begin{equation}\label{sigmaa}
\sigma_{k;t}(B):= \int_{ B \cap \po A_{k;t}}\FF(y)\cdot \nnu (y)\,
d\H^{N-1}(y),
\end{equation}
where $\FF(y)\cdot\nnu(y)$ denotes the normal trace of $\FF$ on the
smooth boundary $\po A_{k;t}$ introduced in Lemma \ref{general}.

We begin with a lemma that will lead to several of the assertions in
Theorem \ref{main}.

\begin{lemma}\label{connection}
If \(E \Subset\Omega\)  is an arbitrary set of finite perimeter,
then
 we have
\begin{equation}
 \int_{E}\FF \cdot \nabla u_{k}  \di y =\int_{0}^{1}\int_{E \cap
u^{-1}_k(t)}\FF \cdot
\nnu_{k}   \di  \H^{N-1} \di  t
\end{equation}
for any \(\FF\in L^{\infty}_{loc}(\Omega;\rn )\), where \(u_{k}\)
denotes the mollification of \(\chi_{_{E}} \) as introduced in
Definition {\rm \ref{mollifiers}} and Lemma {\rm \ref{mollifiers2}}.
\end{lemma}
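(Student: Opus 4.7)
The plan is to apply Federer's coarea formula (Theorem~\ref{coarea}) with $X=\rn$, $Y=\R$, and the Lipschitz map $u_k\colon\rn\to\R$, which in this case is actually smooth so that the $1$-dimensional Jacobian is simply $J u_k(y)=|\nabla u_k(y)|$. The coarea formula gives, for any $\mathcal{L}^N$-integrable test function $g$,
\begin{equation*}
\int_{\rn} g(y)\,|\nabla u_k(y)|\,dy
  =\int_{\R}\Big\{\int_{u_k^{-1}(t)}g(y)\,d\H^{N-1}(y)\Big\}\,dt.
\end{equation*}
The idea is to choose $g(y):=\chi_E(y)\,\FF(y)\cdot\nnu_k(y)$, where on each smooth level set $\nnu_k$ is the interior unit normal to $A_{k;t}=\{u_k>t\}$. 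Since $\nabla u_k$ points in the direction of increasing $u_k$, i.e.\ into $A_{k;t}$, we have $\nnu_k=\nabla u_k/|\nabla u_k|$ wherever $\nabla u_k\ne 0$, so that $\nnu_k|\nabla u_k|=\nabla u_k$ pointwise (and on the critical set $\{\nabla u_k=0\}$ the left-hand integrand $g|\nabla u_k|$ vanishes regardless of how we define $\nnu_k$). With this choice, the left-hand side of the coarea identity becomes $\int_E\FF\cdot\nabla u_k\,dy$, which is exactly the quantity on the left of the statement.

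For the right-hand side, I need $\nnu_k$ to be a genuine, well-defined unit normal on $u_k^{-1}(t)$ for almost every $t$. Since $u_k\in C^\infty(\rn)$, Sard's theorem implies that the critical values of $u_k$ form a set of Lebesgue measure zero in $\R$; hence for a.e.\ $t$ the level set $u_k^{-1}(t)$ consists entirely of regular points, so $\nnu_k=\nabla u_k/|\nabla u_k|$ is a smooth unit normal field on $u_k^{-1}(t)$, and $u_k^{-1}(t)$ is a smooth $(N-1)$-manifold. Moreover, $u_k$ takes values in $[0,1]$ (as a convex combination of the values of $\chi_E$), so $u_k^{-1}(t)=\emptyset$ for $t\notin[0,1]$; thus the outer integral over $\R$ collapses to an integral over $(0,1)$.

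Combining these two steps, the coarea identity reads
\begin{equation*}
\int_{E}\FF\cdot\nabla u_k\,dy
  =\int_{0}^{1}\Big\{\int_{E\cap u_k^{-1}(t)}\FF\cdot\nnu_k\,d\H^{N-1}\Big\}\,dt,
\end{equation*}
which is the desired formula. The hypothesis $\FF\in L^\infty_{\mathrm{loc}}(\Omega;\rn)$ (together with $E\Subset\Omega$) guarantees that $g$ is bounded on a neighbourhood of $E$ and hence integrable, so the coarea formula applies without any subtlety; all the boundedness assumptions on $g$ for Theorem~\ref{coarea} are trivially met.

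The only mildly delicate point, which I view as the main (and modest) obstacle, is the treatment of the critical set $C:=\{y\in\rn : \nabla u_k(y)=0\}$. One must check that (i) the left-hand integrand $\FF\cdot\nabla u_k$ vanishes on $C$ so that the choice of $\nnu_k$ there is immaterial, and (ii) the right-hand inner integrals are unaffected for a.e.\ $t$, which follows from Sard's theorem applied to the smooth function $u_k$ (and is already the standard justification used in the applications of the coarea formula invoked elsewhere in the paper, e.g.\ after equation~\eqref{goodann}). With these remarks the identity is established.
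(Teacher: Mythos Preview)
Your proof is correct and follows essentially the same route as the paper: apply the coarea formula to $u_k$ with the integrand $g=\chi_E\,\FF\cdot\nnu_k$, identify $\nnu_k=\nabla u_k/|\nabla u_k|$ off the critical set so that $g\,|\nabla u_k|=\chi_E\,\FF\cdot\nabla u_k$, and restrict the outer integral to $(0,1)$ since $u_k$ takes values there. The only cosmetic difference is that the paper disposes of the critical set $\{\nabla u_k=0\}$ by building it into the definition of $g$ and then removing it from the level sets via the coarea formula itself, whereas you invoke Sard's theorem; both arguments are equivalent here.
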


\begin{proof} Let $\mathcal{N}$ be the set on which $\nabla u_k=0$.
Then
\begin{equation*}
 \begin{aligned}
\int_{E} \FF\cdot \nabla u_k \di y &=\int_{E \setminus \mathcal{N}}
\FF\cdot \nabla u_{k}
\di y+\int_{\mathcal{N} } \FF\cdot \nabla u_{k}\, dy \\
&= \int_{E \setminus \mathcal{N} } \abs{\nabla
u_k} \frac{\FF\cdot \nabla u_k}{\abs{\nabla u_k}} \di y+0\\
&=  \int_{E} \abs{\nabla u_k}g \di y,
\end{aligned}
\end{equation*}
where $g=\frac{\charfn{E \setminus \mathcal{N}}\FF\cdot \nabla
u_k}{\abs{\nabla u_k}}$. Then, by the Coarea Formula, we have
\begin{eqnarray*}
\int_{E} \FF\cdot \nabla u_k \di y &=&\int_{0}^{1}
\int_{u_k^{-1}(t)\cap (E\setminus \mathcal{N})} g \, d \H^{N-1} \di
t\label{coareaone}
\\
&=&\int_{0}^{1} \int_{u_{k}^{-1}(t) \cap (E\setminus \mathcal{N})}
\FF \cdot \nnu_{k}\,
d\H^{N-1}\, dt \nonumber\\
&=&\int_{0} ^{1}  \int_{u_k^{-1}(t) \cap E} \FF\cdot \nnu_{k} \, \di
\H^{N-1} \di t, \nonumber
\end{eqnarray*}
where we used \(\nnu_{k} (y)=\frac{\nabla u_{k}(y)}{\abs{\nabla
u_{k}(y)}}\) for \(y\in u_{k}^{-1}(t)\cap (E \setminus \mathcal{N}
)\).
\end{proof}

With the help of Theorem
\ref{GGforC1} and the results in \S 4,
we now establish our main theorem.

\begin{theorem}[Main theorem]\label{main}
Let \(\Omega\subset\rn\) be an open set. Suppose that $\FF \in
\DM^\infty_{loc}(\Omega)$ with \(\div \FF=\mu\in
\mathcal{M}(\Omega)\). Let $E\Subset\Omega$ be a set of finite
perimeter. Then
\begin{enumerate}[\rm (I)]
\item For
 almost every
\(s\in (\frac{1}{2},1)\), there exist a signed measure \(\sigma_{i}
\) (independent of $s$) and a family of sets \(A_{k;s}\) with smooth
boundaries such that
\begin{enumerate}[\rm\bf(a)]

\item
  \begin{math}
    \Vert\mu\Vert (A_{k;s}\Delta E^{1})  \to 0;
  \end{math}

\item The measure \(\sigma_{i}\) is the weak* limit of the measures
\(\sigma_{k;s}\);

\item\text{\( \sigma_{i}   \) is carried by \(\partial ^{*}E \)} in
the
  sense that \(\norm{ \sigma_{i}}(\Omega\setminus\partial ^{*}E)=0\);

\item \(\norm{ \sigma_{i}}< <\H^{N-1}\rtangle \po^* E \);

\item \(\displaystyle{\lim_{ k\to \infty }\H^{N-1}(\partial
A_{k;s}\cap (E^{0}\cup \po^*E))=0}\);

\item \(\displaystyle {\lim_{ k \to \infty }\|\sigma_{k;s}\|(
E^{0}\cup \po^*E)=0}\);

\item The density of \(\sigma_{i}  \), denoted as
\(\mathscr{F}_{i}\cdot \nnu\), is called
the {\bf interior normal trace relative to \(E\)} of \(\FF \)  on
\(\partial ^{*}E \) and satisfies
\begin{equation}\label{it}
\int_{E^{1}} \div \FF =:\mu(E^{1})=-\sigma_{i} (\partial ^{*}E)= -
\int_{\partial^{*}E} (\mathscr{F}_{i} \cdot \nnu)(y)\, d\H^{N-1}(y);
\end{equation}

\item If \((2\FF \cdot \nabla u_{k})\charfn{E}  \) is considered as a
sequence of  measures,
then this sequence converges weak* to the measure
  \((\mathscr{F}_{i} \cdot \nnu)\H^{N-1}\rtangle \partial^{*}E\), i.e.,
\[
(2\FF \cdot \nabla u_{k})\charfn{E} \, \stackrel{*}{\rightharpoonup}
\, (\mathscr{F}_{i}\cdot\nnu)\H^{N-1}\rtangle \partial^{*}E
  \qquad\mbox{in}\,\, \mathcal{M}(\Omega);
\]
\item \(\norm{\sigma_{i}}=\norm{ \mathscr{F}_{i}\cdot
\nnu}_{\infty;\partial^{*}E,\H^{N-1}} \le \norm{F}_{\infty; E}\).
\end{enumerate}

\item For
 almost every
\(s\in (0,\frac{1}{2})\), there exist a signed measure \(\sigma_{e}
\) (independent of $s$) and a family of sets \(A_{k;s}\) with smooth
boundaries such that
\begin{enumerate}[\rm\bf(a)]
\item
  \begin{math}
    \Vert\mu\Vert (A_{k;s}\Delta E) \to 0;
  \end{math}

\item The measure \(\sigma_{e}\) is the weak* limit of
\(\sigma_{k;s}\);

\item\text{\(\sigma_{e}\) is carried by \(\partial ^{*}E \)} in
the  sense that \(\norm{ \sigma_{e}}(\Omega\setminus\partial
^{*}E)=0\);

\item \(\|\sigma_{e}\|< <\H^{N-1}\rtangle \po^* E\);

\item \(\displaystyle\lim_{ k\to \infty }\H^{N-1}(\partial
A_{k;s}\cap E) =\lim_{ k\to \infty }\H^{N-1}(u_{k}^{-1}(s) \cap
E)=0\);

\item \(\displaystyle{ \lim_{ k \to \infty }\|\sigma _{k;s}\|(\Omega
\setminus E^{0})= \lim_{ k \to \infty }\|\sigma_{k;s}\|( E)=0}\);

\item The density of \(\sigma_{e}  \), denoted as \(\mathscr{F}_{e}
\cdot \nnu \), is called
the {\bf exterior normal trace relative to \(E\)} of \(\FF \) on
\(\partial ^{*}E \) and satisfies
\begin{equation}\label{ot}
\int_{E} \div \FF =:\mu(E)=-\sigma_{e} (\partial ^{*}E)= -
 \int_{\partial^{*}E} (\mathscr{F}_{e}\cdot \nnu)(y)\, d\H^{N-1}(y);
\end{equation}

\item If \( (2\FF \cdot \nabla u_{k})\charfn{E^0}  \) is considered as a
sequence
of  measures,
then this sequence converges weak* to the measure
 \( (\mathscr{\F}_{e} \cdot \nnu)\H^{N-1}\rtangle \partial^{*}E\), i.e.,
\[
(2\FF \cdot \nabla u_{k})\charfn{E^0}\,
\stackrel{*}{\rightharpoonup} \, (\mathscr{F}_{e}\cdot
\nnu)\H^{N-1}\rtangle \partial^{*}E \qquad\mbox{in}\,\,
\mathcal{M}(\Omega);
\]

\item \(\norm{\sigma_{e}}=\norm{ \mathscr{F}_{e}\cdot
\nnu}_{\infty;\partial^{*}E,\H^{N-1}} \le
\norm{\FF}_{\infty;\Omega\setminus E}\).
\end{enumerate}
\end{enumerate}
\end{theorem}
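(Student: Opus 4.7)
Part (II) is entirely analogous to Part (I), using Corollary \ref{2results} equation (\ref{lessthanhalf}) in place of (\ref{largerthanhalf}) and the symmetric one-sided versions of Theorem \ref{muydificil} and Lemma \ref{tim} (obtained by interchanging the roles of $E$ and its complement), so I focus on Part (I). With $u_k$ the mollifications of $\charfn{E}$ and $A_{k;s}=\{u_k>s\}$, I restrict attention to the full-measure set $S\subset(1/2,1)$ on which, simultaneously for every $k$: (i) $u_k^{-1}(s)$ is a smooth hypersurface (Sard's theorem, countable intersection); (ii) $\norm{\mu}(A_{k;s}\Delta E^{1})\to 0$ (Corollary \ref{2results}); (iii) $\H^{N-1}(u_k^{-1}(s)\cap(E^{0}\cup\po^{*}E))\to 0$ (Theorem \ref{muydificil} and Lemma \ref{tim}); (iv) $\H^{N-1}(u_k^{-1}(s))\le C$ uniformly in $k$ (Lemma \ref{slicesubded}); and (v) $\norm{\mu}(\po A_{k;s})=0$ (Remark \ref{closedopen}). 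Items \textbf{(a)} and \textbf{(e)} are then immediate, and Lemma \ref{Fuglede}, applied to the smoothly bounded $A_{k;s}$, yields
\begin{equation}\label{ggks}
\mu(A_{k;s})=-\sigma_{k;s}(\po A_{k;s}).
\end{equation}

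Because $\norm{\sigma_{k;s}}(\Omega)\le\norm{\FF}_{\infty;\Omega}\,\H^{N-1}(\po A_{k;s})\le C\norm{\FF}_{\infty;\Omega}$, Corollary \ref{unbMeasures} furnishes a subsequence with $\sigma_{k;s}\,\stackrel{*}{\rightharpoonup}\,\sigma_{i}$ in $\mathcal{M}(\Omega)$. Item \textbf{(f)} is direct from $\norm{\sigma_{k;s}}(E^{0}\cup\po^{*}E)\le\norm{\FF}_{\infty}\,\H^{N-1}(\po A_{k;s}\cap(E^{0}\cup\po^{*}E))\to 0$ via (iii). For \textbf{(c)}, any $\varphi\in C_{c}(\Omega)$ vanishing on a neighborhood of $\po^{*}E$ has its support in an open $U$ with $\overline{U}\cap\po^{*}E=\emptyset$, so $\bigl|\int\varphi\,d\sigma_{k;s}\bigr|\le\norm{\varphi}_{\infty}\norm{\sigma_{k;s}}(U)\to 0$ by (f), forcing $\norm{\sigma_{i}}(\Omega\setminus\po^{*}E)=0$. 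For \textbf{(d)}, I adapt the covering argument from the proof of Theorem \ref{GGforC1}: if $A\subset\po^{*}E$ has $\H^{N-1}(A)=0$, cover $A$ by an open $G$ with $\H^{N-1}(G\cap\po^{*}E)<\varepsilon$, and use (iii) combined with Lemma \ref{slicesubded} to bound $\H^{N-1}(G\cap\po A_{k;s})\le C\varepsilon$ for large $k$, giving $\norm{\sigma_{i}}(A)=0$. Item \textbf{(g)} follows by passing to the limit in (\ref{ggks}): the left side tends to $\mu(E^{1})$ by (a), while $\sigma_{k;s}(\po A_{k;s})=\sigma_{k;s}(\Omega)\to\sigma_{i}(\Omega)=\sigma_{i}(\po^{*}E)$ by (c), with $\mathscr{F}_{i}\cdot\nnu:=d\sigma_{i}/d(\H^{N-1}\rtangle\po^{*}E)$ well-defined by (d).

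For items \textbf{(h)} and the $s$-independence of $\sigma_{i}$, the slicing Lemma \ref{connection} gives, for every $\varphi\in C_{c}(\Omega)$,
\begin{equation}\label{slicing}
\int_{\Omega}2\varphi\,\FF\cdot\nabla u_{k}\,\charfn{E}\,dy
=2\int_{0}^{1}\Bigl(\int_{E\cap u_{k}^{-1}(t)}\varphi\,\FF\cdot\nnu_{k}\,d\H^{N-1}\Bigr)dt.
\end{equation}
For $t>1/2$, the convention $E=E^{1}\cup\po^{m}E$ together with (iii) and Lemma \ref{dolordemanos} forces $E\cap u_{k}^{-1}(t)=u_{k}^{-1}(t)$ up to $\H^{N-1}$-null sets, so the inner integral equals $\int\varphi\,d\sigma_{k;t}$; for $t<1/2$, the symmetric one-sided approximation shows the inner integral is $o(1)$ in $L^{1}(dt)$. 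The product rule (Theorem \ref{productrule}) applied to $g=\charfn{E}$ guarantees that the left side of (\ref{slicing}) has a unique weak-$*$ limit, which by dominated convergence in $t$ must coincide with $\int\varphi\,d\sigma_{i}$ for every good $s$, simultaneously yielding \textbf{(h)} and the $s$-independence of $\sigma_{i}$. Item \textbf{(i)} is obtained by localizing (\ref{ggks}) to small balls $B$ centered at $\H^{N-1}$-density points of $\po^{*}E$: (a) implies $\po A_{k;s}\cap B\subset\overline{E}\cap B$ up to an $\H^{N-1}$-null portion, so $\norm{\sigma_{k;s}}(B)\le\norm{\FF}_{\infty;E}\,\H^{N-1}(\po A_{k;s}\cap B)$, which upon passage to the limit bounds the density $\mathscr{F}_{i}\cdot\nnu$ at $\H^{N-1}$-a.e. point of $\po^{*}E$ by $\norm{\FF}_{\infty;E}$.

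The principal obstacle is carrying out \textbf{(c)} and the $s$-independence simultaneously: a priori, distinct good levels $s\in S$ might give different subsequential weak-$*$ limits, and the only way to identify them is to tie each to the single product-rule measure $\overline{\FF\cdot\nabla\charfn{E}}\,\rtangle\,\po^{*}E$ via (\ref{slicing}). Theorem \ref{muydificil} is indispensable here, because it is precisely what forces the smooth approximating boundaries $\po A_{k;s}$ to collapse onto $\po^{*}E$ exclusively from the measure-theoretic \emph{interior} of $E$, thereby labeling $\sigma_{i}$ unambiguously as the interior (rather than exterior) normal trace. The localized $L^{\infty}$ bound in (i), which requires $\norm{\FF}_{\infty;E}$ rather than the cruder $\norm{\FF}_{\infty;\Omega}$, is the subsidiary technical challenge.
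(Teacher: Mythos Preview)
Your argument for \textbf{(c)} has a genuine gap. You claim that if $\varphi\in C_c(\Omega)$ vanishes on a neighborhood of $\partial^*E$, with $\spt\varphi\subset U$ and $\overline{U}\cap\partial^*E=\emptyset$, then $\norm{\sigma_{k;s}}(U)\to 0$ ``by (f)''. But (f) only asserts $\norm{\sigma_{k;s}}(E^0\cup\partial^*E)\to 0$; it says nothing about $\norm{\sigma_{k;s}}(U\cap E^1)$. Since $U$ is disjoint from $\partial^*E$, it decomposes (up to an $\H^{N-1}$-null set) as $(U\cap E^0)\cup(U\cap E^1)$, and the second piece is precisely what (f) does \emph{not} control. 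There is no a priori reason why, for a single fixed level $s$, the surfaces $u_k^{-1}(s)$ cannot carry substantial $\H^{N-1}$-measure deep inside $E^1$ away from $\partial^*E$; Theorem \ref{muydificil} only rules out mass in $E^0\cup\partial^*E$. Your proof of \textbf{(d)} has the identical defect: the claimed bound $\H^{N-1}(G\cap\partial A_{k;s})\le C\varepsilon$ would require control of $\H^{N-1}(G\cap\partial A_{k;s}\cap E^1)$, which neither (iii) nor Lemma \ref{slicesubded} provides.

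The paper closes this gap by a genuinely different device: rather than working at the fixed level $s$, it exploits the fact (established in \textbf{(b)}) that the limit measure $\sigma$ is the \emph{same} for every good level $t\in(\tfrac12,1)$. This allows one to write $\norm{\sigma}(U)=2\int_{1/2}^1\norm{\sigma}(U)\,dt$, apply Fatou and the lower-semicontinuity $\norm{\sigma}(U)\le\liminf_k\norm{\sigma_{k;t}}(U)$, and then invoke Lemma \ref{connection} (the coarea formula) to collapse the double integral to $2\liminf_k\int_U|\FF\cdot\nabla u_k|\,dy\le 2\norm{\FF}_\infty\norm{\nabla\charfn{E}}(U)$. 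Since $\norm{\nabla\charfn{E}}=\H^{N-1}\rtangle\partial^*E$ vanishes on $U$, this yields $\norm{\sigma}(U)=0$. The same computation handles \textbf{(d)}. Note the logical order: the paper establishes $s$-independence \emph{first} in \textbf{(b)} (via the product-rule trick applied to $\varphi\FF$), and only then proves \textbf{(c)}; your plan reverses this and thereby loses access to the averaging-over-$t$ argument. Your approach to $s$-independence via \textbf{(h)} is workable in spirit, but you should reorganize so that it precedes \textbf{(c)} and \textbf{(d)}.
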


\begin{proof}
We will prove only part (I), since the proof of part (II) is
virtually identical. For notational simplicity,  we will use the
notation $\sigma$ rather than $\sigma_{i}$ in the proof of part (I).
Throughout the proof, we will consider only those values of \(s\in
(\frac{1}{2},1)\) for which the results in \S 4 are valid for all
the mollified functions, \(u_{k}\), of \(\charfn{E} \) thus omitting
at most a set of measure zero. Without increasing the measure of
this exceptional set, call it $\mathcal{S}$, we will also include
those values of \(s\) for which \(\Vert\mu\Vert (u_{k}^{-1}(s)) \ne
0 \) for all \(k\). For the rest of the proof, we fix such an
$s\notin\mathcal{S}$.

\medskip
We start with {\bf (a)}. We consider the sets $A_{k;s}$ as in Lemma
\ref{general}. The desired result  follows directly from Corollary
\ref {2results}.

\medskip
{\bf (b) } Since $\FF$ is bounded, Lemma \ref{slicesubded} implies
that there exists a constant $C$ such that
\begin{equation}
\norm{\sigma_{k;s}}(\Omega)\le C,
\end{equation}
which yields, as in \eqref{sigmaconverge}, the existence of a signed
measure
\(\sigma _{s}\) such that
\begin{equation} \label{yayaya}
\sigma _{k;s}\, \stackrel{*}{\rightharpoonup} \, \sigma_{s}
\quad\mbox{in}\,\,\,\mathcal{M}(\Omega).
\end{equation}
Utilizing \eqref{largerthanhalf}, we also obtain that
$\mu(A_{k;s})\to \mu(E^{1})$. Since Theorem \ref{GGforC1} yields
$\mu(A_{k;s})=-\sigma_{k;s}(\Omega)$, we obtain, after letting
$k\to\infty$,
\begin{equation}\label{indep}
\mu(E^{1})=-\sigma_{s}(\Omega).
\end{equation}
Since the left side of equation (\ref{indep}) is independent of $s$,
we show next that \(\sigma _{s}\) is also independent of \(s\) (and
independent of the sequence in the weak* convergence
(\ref{yayaya})). To see this, we fix any $\phi\in C_c^1(\Omega)$ and
note that, since $\FF$ is a divergence-measure field, the product
rule in Lemma \ref{productrule} implies that $\phi\FF$ is also a
divergence-measure field. Proceeding as above with $\phi\FF$ instead
of $\FF$, we obtain
 \begin{equation}
   \int_{E^1} \div (\phi \FF) = -\int_{\Omega}\phi\, d\sigma_{s}
 \end{equation}
for any $\phi \in C_c^1(\Omega)$. Therefore, for any two measures
$\sigma_{s}$ and $\sigma_{s'}$ that are the limits in
(\ref{yayaya}), we have that  $\int_{\Omega}\phi \,
d\sigma_{s}=\int_{\Omega}\phi\, d\sigma_{s'}$ for any $\phi \in
C_c^1(\Omega)$ and thus we conclude that $\sigma_{s}= \sigma_{s'}$.

\medskip
{\bf (c) } Let \( A\subset\Omega\setminus \partial^{*}E \) be an
arbitrary Borel set. Referring to \eqref{normHaus}, we see that
\begin{equation*}
\norm{ \nabla \charfn{E}}(A)=0.
\end{equation*}
On the other hand,
we know
\begin{equation}
  \label{outerreg}
  \begin{aligned}
0
&=\Vert\nabla \charfn{E}\Vert(A)\\
&=\inf\{\Vert\nabla \charfn{E} \Vert
(U):\text{\(A\subset U,\, U \)\;open}\}\\
&=\inf\{\Vert\nabla \charfn{E} \Vert
(U):\text{\(A\subset U,\, U \)\;open}, \norm{\nabla \charfn{E}}(
\partial U)=0\}.
\end{aligned}
\end{equation}
In order to prove that $\norm{\sigma }(A)=0$, we proceed by
contradiction via  assuming   \(\norm{\sigma }(A) >0 \). {}From
\eqref{outerreg}, there is an open set \(U\supset A \) such that
\(\norm{\nabla \charfn{E}}( \partial U)=0\) and
%
%
\begin{equation}
  \label{squeeze}
   \norm{ \nabla
\charfn{E}}(U)
   < \frac{\Vert \sigma   \Vert (A)}{ 2\Vert \FF\Vert_{\infty}}.
\end{equation}
{}From Lemma \ref{connection}, we have
\begin{equation}
 \int_{U} \abs{\FF \cdot \nabla u_{k}}\, dy =\int_{0}^{1}\int_{U \cap
u^{-1}_k(t)}\abs{\FF \cdot
\nnu_{k} }  \di  \H^{N-1} \di  t.
\end{equation}
Since \(U\) is open and  \(\sigma_{k;t}\,
\stackrel{*}{\rightharpoonup} \,\sigma \) in $\mathcal{M}(\Omega)$,
\begin{eqnarray*}
\norm{\sigma } (A)
&\le& 2\int_{\frac{1}{2}}^{1}\norm{\sigma } (U) \di t
\leq 2\int_{\frac{1}{2}}^{1}\liminf_{k\to\infty}\norm{\sigma_{k;t}}(U)\di t\\
&\leq& 2\int_{0}^{1}\liminf_{k\to\infty}\norm{\sigma_{k;t}}(U)\di t
\leq 2\liminf_{k\to\infty}\int_{0}^{1}\norm{\sigma_{k;t}}(U)\di t,
\end{eqnarray*}
by Fatou's lemma. Therefore, we have
\begin{eqnarray*}
\norm{\sigma } (A)&\le
&2\liminf_{k\to\infty}\int_{0}^{1}\int_{u_{k}^{-1}(t) \cap U}\abs
{\FF(y)\cdot \nnu(y)} \di \H^{N-1}(y) \di  t
\\
&= & 2\liminf_{ k \to \infty }\int_{U} \abs{\FF \cdot \nabla u_{k}}\,
dy\\
&\leq& 2\norm{\FF} _{\infty }\lim_{k \to \infty}\int_{U} \abs{ \nabla
u_k}\, dy\\
&= & 2\norm{\FF} _{\infty }\norm{\nabla \charfn{E} }(U)\\
&< & \Vert \sigma  \Vert(A),
\end{eqnarray*}
where we used Lemma \ref{mollifiers2}(iv) and the fact that \(\norm{
\nabla \charfn{E}}(\partial  U)=0 \). This yields a contradiction
and thus establishes our result.

\medskip
{\bf (d) } Let \(A \subset \Omega\) be a Borel set with \(\H^{N-1}
(A)=0\). Then, appealing to  \eqref{normHaus}, we find that
\(\norm{\nabla \charfn{E} }(\Omega)=0\). From this, the proof can be
proceeded precisely as in {\bf (c)} to yield our desired conclusion.
%
%

\medskip
{\bf (e) } This is the result of Theorem \ref{muydificil}.

\medskip
{\bf (f) } In view of the definition
\[
\sigma _{k;s}(B):= \int_{\partial A_{k;s}\cap B}\FF(y)\cdot \nnu(y)\, d
\H^{N-1} (y),
\]
and the fact that the normal trace is bounded, the result follows
immediately from {\bf(e)}.

{\bf (g) } From {\bf (a)}, we have the existence of smoothly bounded
sets such that
\begin{equation}\label{good1}
\Vert\mu\Vert (A_{k;s}\Delta E^{1})\to0 \qquad\mbox{as}\,\,\,
k\to\infty,
\end{equation}
where $s>1/2$ was fixed at the beginning of the proof. From Theorem
\ref{GGforC1}, we know that our desired result holds for smoothly
bounded sets:
\begin{equation}\label{antes1}
\mu (A_{k;s}):_{}=\int_{ A_{k;s}}\div \FF=-\int_{\partial
A_{k;s}}\FF(y)\cdot\nnu (y)\, d\H^{N-1}(y).
\end{equation}
We note that, with our notation in force, we may write
\eqref{antes1} as
\begin{equation}
  \label{short}
\mu(A_{k;s})=-\sigma_{k;s}(\Omega)=-\sigma_{k;s} (\partial A_{k;s}).
\end{equation}

Since
\begin{equation} \label{needforh}
\mu(A_{k;s})\to \mu (E^{1})\quad\text{and}\quad \sigma
_{k;s}(\Omega)\to \sigma(\Omega) \qquad\,\mbox{as}\,\,\, k\to\infty,
\end{equation}
we obtain
\[
\mu (E^{1})=-\sigma (\partial^{*}E ).
\]
Because \(\norm{\sigma} < < \H^{N-1} \rtangle \po^* E \), we know
that there exists \(\mathscr{F}_i \cdot \nnu \in L^{1}(\partial^{*}E
)\) such that
\[
\sigma(B)=\int_{B\cap \partial^{*}E }(\mathscr{F}_i \cdot \nnu)( y)\di
\H^{N-1}(y),
\]
which gives \eqref{it}.

\medskip
{\bf(h)} From Lemma \ref{connection}, we obtain
\[
\lim_{k \to \infty } \int_{E} \FF\cdot \nabla u_k \,\di y =\lim_{k\to
\infty}
 \int_{0 }^{1}  \int_{u_k^{-1}(t)\cap E } \FF(y)\cdot \nnu_{k} (y) \di
\H^{N-1}(y)
\di t=
\lim_{k \to
  \infty } \int _{0 }^{1} \sigma_{k;t}(E)\di t.
\]
Thus,
\begin{align*}
 \overline{\charfn{E}\FF \cdot \nabla u_E}(\Omega)
&:=  \lim_{k \to \infty } \int_{\Omega}\charfn{E}  \FF\cdot \nabla u_k
\di y\\
&=  \lim_{k \to \infty } \int_{E} \FF\cdot \nabla u_k \di y\\
&=\lim_{k \to \infty }\int _{\frac{1}{2} }^{1} \sigma_{k;t} (E) \di
t+\lim_{k \to \infty }\int _{\frac{1}{2} }^{1} \sigma_{k;t} (E^{0})
\di t\quad\text{(by {\bf (f)} above)}\\
&=\lim_{k \to \infty }\int _{\frac{1}{2} }^{1} \sigma_{k;t} (\Omega)
\di t\\
&= \frac{1}{2}\sigma(\Omega) \,.
\end{align*}
Let $\varphi$ be a function in $C_c^1(\Omega)$. Since $\varphi \FF$
is also a bounded divergence-measure field, we can proceed as above
with the vector field $\varphi \FF$ instead of $\FF$ to conclude
that
\begin{equation}
\int_{\Omega}\,\,\varphi\,  d\,\overline{\charfn{E}\FF \cdot \nabla
u_E} = \frac{1}{2}\int_{\Omega} \varphi \,\,d \sigma,
\end{equation}
which implies that $\sigma =2\overline{\charfn{E}\FF \cdot \nabla
u_E}$.

\medskip
{\bf(i) }
We have that, for $\H^{N-1}$-a.e. $y\in \po^*E$,
\begin{equation} \label{cansada}
\mathscr{F}_i\cdot \nnu (y) =\lim_{r \to 0}
\frac{\sigma(B(y,r))}{\norm{\nabla \chi_E}(B(y,r))},
\end{equation}
where we can choose the balls $B(y,r)$ such that $\norm{\nabla
\chi_E}(\partial B(y,r))=\norm{\sigma}(\partial B(y,r))=0$.

Using a similar argument as in {\bf{(c)}}, we obtain
\begin{align*}
\norm{\sigma}(B(y,r))
&=2\int_{\frac{1}{2}}^{1}\norm{\sigma}(B(y,r))\, dt\\
&= 2\lim_{k \to
\infty}\int_{\frac{1}{2}}^{1}\norm{\sigma_{k;t}}(B(y,r))\, dt\\
&= 2\lim_{k \to
\infty}\int_{\frac{1}{2}}^{1}\big(\norm{\sigma_{k;t}}(B(y,r) \cap
E^1) +
     \norm{\sigma_{k;t}}(B(y,r) \cap (E^0 \cup \po^*E))\big)\, dt\\
&= 2\lim_{k \to
\infty}\int_{\frac{1}{2}}^{1}\norm{\sigma_{k;t}}(B(y,r) \cap E^1)\,
dt\\
&= 2\lim_{k \to \infty}\int_{\frac{1}{2}}^{1} \int_{u_k^{-1}(t)\cap
B(y,r)\cap E^1}\abs{\FF\cdot\nnu}d\H^{N-1}dt\\
&\leq 2\norm{\FF}_{\infty;E^1} \lim_{k \to
\infty}\int_{\frac{1}{2}}^{1} \int_{u_k^{-1}(t)\cap B(y,r)}
d\H^{N-1}dt,
\end{align*}
where we have used the fact that $\norm{\sigma_{k;t}}(E^0 \cup
\po^*E) \to 0$ as $k\to\infty$ for a.e. $t>1/2$.

Therefore, from \eqref{cansada}, we obtain
\begin{align*}
\abs{\mathscr{F}_i \cdot \nnu(y)} &\leq\lim_{r \to 0}
\frac{\norm{\sigma}(B(y,r))}{\norm{\nabla \chi_E}(B(y,r))}\\
&\leq 2\norm{\FF}_{\infty;E^1} \lim_{r \to 0}\lim_{k \to
\infty}\frac{\int_{\frac{1}{2}}^{1} \int_{u_k^{-1}(t)\cap
B(y,r)} d\H^{N-1}dt}{\int_{B(y,r)}\abs{\nabla u_k}}\\
&= 2\norm{\FF}_{\infty;E^1} \lim_{r \to 0}\lim_{k \to
\infty}\frac{\int_{\frac{1}{2}}^{1} \int_{u_k^{-1}(t)\cap B(y,r)}
d\H^{N-1}dt}{\int_{0}^{1} \int_{u_k^{-1}(t)\cap
B(x,r)} d\H^{N-1}dt}\\
&=\norm{\FF}_{\infty;E^1}= \norm{\FF}_{\infty;E}. \qedhere
\end{align*}
\end{proof}
As a direct result, we obtain the Gauss-Green theorem for
divergence-measure fields over sets of finite perimeter. This also
shows that our definition of the normal trace is in agreement with
that given in the sense of distributions, Definition \ref{dmfield}.

\begin{theorem}[Gauss-Green theorem]\label{generalGG}
\label{above} Let \(\Omega\subset\rn\) be an open set. Let $\FF
\in\DM^\infty_{loc}(\Omega)$ and
let $E \Subset\Omega$ be a bounded set of finite perimeter. Then,
\begin{equation}
\int_{E^1}\varphi\, \div \FF + \int_{E^1} \FF \cdot \nabla \varphi =-
\int_{\partial^* E} \varphi\, (\mathscr{F}_{i} \cdot \nnu)\, d
\H^{N-1}
\end{equation}
for all $\varphi \in C^{\infty}_{c}(\Omega  )$, where $\nnu$ is the
measure-theoretic interior unit normal to $E$ on $\po^*E$.
\end{theorem}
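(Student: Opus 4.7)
The plan is to reduce the identity directly to the density formula \eqref{it} of Theorem \ref{main}, applied not to $\FF$ itself but to the modified vector field $\varphi\FF$. Because $\varphi\in C_c^\infty(\Omega)$, one checks from the product rule of Theorem \ref{productrule} that $\varphi\FF\in\DM^\infty_{loc}(\Omega)$ with
\[
\div(\varphi\FF)=\varphi\,\div\FF+\FF\cdot\nabla\varphi
\]
as an equality of Radon measures on $\Omega$: the mollifier of the smooth function $\varphi$ is $\varphi$ itself, so $g^{*}=\varphi$ in Theorem \ref{productrule}, and since $\nabla\varphi$ is continuous and compactly supported, $\FF\cdot\nabla\varphi_k\to\FF\cdot\nabla\varphi$ in $L^1_{loc}$, so the weak-$*$ correction $\overline{\FF\cdot\nabla\varphi}$ coincides with the absolutely continuous measure $(\FF\cdot\nabla\varphi)\,dy$.

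Next I apply Theorem \ref{main}(I)(g) to the field $\varphi\FF$ over the set $E$, producing an interior normal trace measure $\sigma_i^{\varphi\FF}$ supported on $\partial^{*}E$ satisfying
\[
\int_{E^{1}}\div(\varphi\FF)=-\sigma_i^{\varphi\FF}(\partial^{*}E).
\]
Expanding the left-hand side via the product rule above and interpreting $\int_{E^1}\varphi\,\div\FF$ as integration of the continuous function $\varphi$ against the Radon measure $\div\FF$ yields the correct left-hand side of the claimed identity.

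The remaining step, which is the main (but routine) point, is to identify the density of $\sigma_i^{\varphi\FF}$ on $\partial^{*}E$ as $\varphi\cdot(\mathscr{F}_{i}\cdot\nnu)$. This follows from the construction of $\sigma_i$ in Theorem \ref{main}(I)(b), since the approximating smooth surfaces $\partial A_{k;s}$ depend only on $E$ and not on the vector field. Hence the approximating measures for $\varphi\FF$ satisfy
\[
\sigma_{k;s}^{\varphi\FF}(B)=\int_{B\cap\partial A_{k;s}}\varphi\,\FF\cdot\nnu\,d\H^{N-1}=\int_{B}\varphi\,d\sigma_{k;s}^{\FF},
\]
and for any test function $\psi\in C_c(\Omega)$ the continuity of $\varphi$ together with $\sigma_{k;s}^{\FF}\stackrel{*}{\rightharpoonup}\sigma_{i}^{\FF}$ gives
\[
\int\psi\,d\sigma_{k;s}^{\varphi\FF}=\int\psi\varphi\,d\sigma_{k;s}^{\FF}\longrightarrow\int\psi\varphi\,d\sigma_{i}^{\FF}.
\]
Therefore $\sigma_{i}^{\varphi\FF}=\varphi\,\sigma_{i}^{\FF}$, which delivers the density identity $\mathscr{(\varphi\FF)}_{i}\cdot\nnu=\varphi\,(\mathscr{F}_{i}\cdot\nnu)$ on $\partial^{*}E$ and hence the right-hand side of the claimed formula.

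The only substantive obstacle is the bookkeeping verification that applying Theorem \ref{main} to $\varphi\FF$ is legitimate and that the resulting trace transforms in the expected way under multiplication by a smooth scalar; both points are consequences of the fact that the approximation scheme in \S4 is intrinsic to the set $E$, so passing from $\FF$ to $\varphi\FF$ simply inserts the continuous factor $\varphi$ inside the weak-$*$ limit. With that identification, comparing the two expressions for $\int_{E^{1}}\div(\varphi\FF)$ gives the Gauss--Green identity for every $\varphi\in C_c^{\infty}(\Omega)$.
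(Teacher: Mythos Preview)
Your proposal is correct and follows essentially the same approach as the paper: apply the product rule (Theorem~\ref{productrule}) to see that $\varphi\FF\in\DM^\infty_{loc}(\Omega)$ with $\div(\varphi\FF)=\varphi\,\div\FF+\FF\cdot\nabla\varphi$, then apply Theorem~\ref{main} to $\varphi\FF$ and identify the resulting interior trace as $\varphi\,(\mathscr{F}_i\cdot\nnu)$. The paper's proof is terser, simply writing ``following the proof of Theorem~\ref{main} applied to $\varphi\FF$'', whereas you spell out explicitly why $\sigma_i^{\varphi\FF}=\varphi\,\sigma_i^{\FF}$ via the weak-$*$ limit and the fact that the approximating surfaces $\partial A_{k;s}$ depend only on $E$; this is exactly the mechanism already used inside the proof of Theorem~\ref{main}(I)(b) and (h). One small imprecision: the mollification of $\varphi$ is not literally $\varphi$ itself, but since $\varphi$ is continuous the mollifiers converge uniformly to $\varphi$, so indeed $g^*=\varphi$ and $\overline{\FF\cdot\nabla\varphi}=\FF\cdot\nabla\varphi$ as you claim.
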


\begin{proof}
{}From Lemma \ref{productrule}, it follows that $ \varphi \FF $ is a
bounded divergence-measure field and
\begin{equation}
\label{patricio} \div (\varphi \FF) = \varphi\, \div \FF + \FF \cdot
\nabla \varphi.
\end{equation}
Following the proof of Theorem \ref{main} applied to $\varphi \FF$
(instead of $\FF$), (see Theorem \ref{productrule}), we obtain
\begin{equation}
\int_{E^1} \div (\varphi \FF) =-\int_{\partial^* E} \varphi\,
(\mathscr{F}_{i} \cdot \nnu)\, d \H^{N-1},
\end{equation}
which, due to (\ref{patricio}), gives the desired result.
\end{proof}

We conclude this section with the following remark.

\begin{remark} \label{cor:5.4}
Theorem {\rm \ref{above}} implies that, when $E$ is an open set of
finite perimeter, our trace $ \mathscr{F}_{i} \cdot \nnu $ agrees
with the one defined in {\rm (\ref{tracedisplay})}.
\end{remark}

\section{The Divergence-Measures of Jump Sets via the Normal Traces}

In Theorem \ref{main}, we have defined the interior and exterior
normal traces of $\FF \in \DM^{\infty}_{loc}(\Omega)$,
$\mathscr{F}_{i}\cdot \nnu$ and $\mathscr{F}_{e}\cdot \nnu$, over
the boundary of a set of finite perimeter $E \Subset\Omega$. In
order to obtain the interior normal trace of $\FF$ on
$\po^*\tilde{E}$, where $\tilde{E}:=E^0 \cup \po^m E$, we reproduce
the proof of Theorem \ref{main} and apply it to $\tilde{E}$.
Therefore, the trace measure, denoted by $\sigma_{-}$, is obtained
by using the level sets $B_{k;s}=\{v_k>s\}$ for some $s\in
(\frac{1}{2},1)$, where $v_k$ is the mollification of
$\charfn{\tilde{E}}$. We note that, for all $y\in \Omega$,
$$
\rho_{\ve}*\charfn{E}(y)+ \rho_{\ve}*\charfn{\tilde{E}}(y) =1,
$$
and therefore
$$
v_k^{-1}(s)= u_k^{-1}(1-s),
$$
where $1-s \in (0,\frac{1}{2})$. Since $-\nnu$ is the interior unit
normal to $\tilde{E}$, we have
\begin{eqnarray}
\label{compadecete} \sigma_-(\Omega)&=&-\lim_{k \to
\infty}\int_{\partial B_{k;s}}\FF\cdot\nnu\, d\mathcal{H}^{N-1}=
                -\lim_{k \to \infty}\int_{\partial
A_{k;1-s}}\FF\cdot\nnu\,d\mathcal{H}^{N-1}\\
&=&-\lim_{k \to \infty} \sigma_{k;1-s}(\rn)= -\sigma_e
(\Omega).\nonumber
\end{eqnarray}
The following observation now becomes  evident.

\begin{corollary}
\label{intext} The interior trace of \(\FF\) relative to \(\tilde{E}
\) on \(\partial^{*}E \) is the same
 as minus the exterior trace of \(\FF\) relative to \( E\) on
\(\partial^{*}E \).
\end{corollary}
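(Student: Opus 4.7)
The plan is to reduce the assertion to the identity already observed in \eqref{compadecete}, but promoted from an equality of total masses to an equality of signed Radon measures on $\Omega$ (and hence on $\partial^*E=\partial^*\tilde E$). The reduction follows by repeating the proof with an arbitrary test function, combined with a careful bookkeeping of the orientations of the level-set boundaries of $u_k$ and $v_k$.

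First, I would apply Theorem \ref{main}(I) to the set $\tilde E:=E^0\cup\partial^m E$, thereby producing the interior trace measure $\sigma_-:=\sigma_i^{\tilde E}$ as the weak$^*$ limit of the measures
\[
\tau_{k;s}(B):=\int_{B\cap\partial B_{k;s}}\FF(y)\cdot\nnu_{B_{k;s}}(y)\,d\mathcal{H}^{N-1}(y),
\qquad s\in(\tfrac12,1),
\]
where $B_{k;s}=\{v_k>s\}$ and $\nnu_{B_{k;s}}=\nabla v_k/|\nabla v_k|$ is the interior unit normal to $B_{k;s}$. At the same time, Theorem \ref{main}(II) applied to $E$ itself and the parameter $s':=1-s\in(0,\tfrac12)$ produces the exterior trace measure $\sigma_e=\sigma_e^{E}$ as the weak$^*$ limit of the measures $\sigma_{k;s'}$ defined in \eqref{sigmaa}.

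The crux is to identify $\tau_{k;s}$ with $-\sigma_{k;1-s}$ for almost every $s\in(\tfrac12,1)$ and every $k$. Since $v_k=\rho_{\varepsilon_k}\ast\chi_{\tilde E}=1-\rho_{\varepsilon_k}\ast\chi_E=1-u_k$, one has $\nabla v_k=-\nabla u_k$ everywhere; consequently $\partial B_{k;s}=u_k^{-1}(1-s)=\partial A_{k;1-s}$ and $\nnu_{B_{k;s}}=-\nnu_{A_{k;1-s}}$ on this common hypersurface. For any Borel set $B\Subset\Omega$ this yields
\[
\tau_{k;s}(B)=\int_{B\cap\partial A_{k;1-s}}\FF\cdot(-\nnu_{A_{k;1-s}})\,d\mathcal{H}^{N-1}=-\sigma_{k;1-s}(B),
\]
which is the test-function-level refinement of the scalar identity in \eqref{compadecete}. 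Passing to the weak$^*$ limit (which exists by the arguments in the proof of Theorem \ref{main}(b), and is independent of the choice of $s$ within its admissible range), one obtains $\sigma_-=-\sigma_e$ as signed Radon measures on $\Omega$. Both measures are supported on $\partial^*E=\partial^*\tilde E$ and absolutely continuous with respect to $\mathcal{H}^{N-1}\rtangle\partial^*E$ by Theorem \ref{main}(c)--(d), so their densities coincide up to the sign, which is exactly the assertion that the interior trace of $\FF$ relative to $\tilde E$ equals minus the exterior trace of $\FF$ relative to $E$ on $\partial^*E$.

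The only mildly delicate point is the insistence that the limiting measures $\sigma_-$ and $\sigma_e$ do not depend on the choice of $s$; this was already verified inside the proof of Theorem \ref{main}(b) by testing against $\varphi\in C_c^1(\Omega)$ and applying the product rule (Lemma \ref{productrule}) to $\varphi\FF$, so the same independence transfers to $\tau_{k;s}$ and lets the $s\leftrightarrow 1-s$ pairing above survive in the limit.
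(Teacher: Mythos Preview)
Your proposal is correct and follows essentially the same approach as the paper: the paper derives \eqref{compadecete} and then simply declares the corollary ``evident,'' and what you have written is precisely the spelling-out of that evidence---namely, that the level-set identity $v_k^{-1}(s)=u_k^{-1}(1-s)$ together with $\nabla v_k=-\nabla u_k$ gives $\tau_{k;s}=-\sigma_{k;1-s}$ as measures (not merely in total mass), whence $\sigma_-=-\sigma_e$ in the weak$^*$ limit. Your added remark on the $s$-independence and the passage to densities via Theorem~\ref{main}(c)--(d) is a welcome bit of care that the paper leaves implicit.
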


In order to establish the relation between $\sigma_i$ and
$\sigma_e$, we subtract \eqref{ot} from \eqref{it} and obtain the
following formula for $\mu=\div \FF$.

\begin{corollary}
\label{jump} $\quad$  $\displaystyle{ \mu (\partial^{*}E)=
  \int_{\partial^{*}E }(\mathscr{F}_{i}\cdot \nnu-\mathscr{F}_{e}\cdot
\nnu)(y)\di \H^{N-1}( y)} $.
\end{corollary}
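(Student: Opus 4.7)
The plan is to deduce the identity directly by subtracting equation \eqref{ot} from equation \eqref{it} in Theorem \ref{main}, with careful bookkeeping of the set-theoretic difference $E \setminus E^{1}$ on the left-hand side. Under the convention of Definition \ref{convention}, we have $E = E^{1} \cup \partial^{m} E$ as a disjoint union (modulo the null set $\mathcal{N}$ in the decomposition $\R^{N} = E^{1} \cup \partial^{*}E \cup E^{0} \cup \mathcal{N}$), so $E \setminus E^{1} = \partial^{m}E$ up to an $\H^{N-1}$-null set, and by Remark \ref{rem00} we also have $\H^{N-1}(\partial^{m}E \setminus \partial^{*}E) = 0$.

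The key observation I would invoke is the absolute continuity of $\|\mu\|$ with respect to $\H^{N-1}$ for $\FF \in \DM^{\infty}_{\loc}(\Omega)$. This is supplied by Lemma \ref{capacity} in the case $p = \infty$, $q = 1$, together with Fleming's identification (Remark \ref{fleming}) of the vanishing of the $1$-capacity with the vanishing of $\H^{N-1}$. Consequently, $\|\mu\|\bigl(\partial^{m}E \setminus \partial^{*}E\bigr) = 0$, and therefore
\[
\mu(E) - \mu(E^{1}) = \mu(E \setminus E^{1}) = \mu(\partial^{m}E) = \mu(\partial^{*}E).
\]

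Given this, the corollary is immediate: subtracting \eqref{ot} from \eqref{it} gives
\[
\mu(E^{1}) - \mu(E) = -\int_{\partial^{*}E}\bigl(\mathscr{F}_{i}\cdot \nnu - \mathscr{F}_{e}\cdot \nnu\bigr)(y)\, d\H^{N-1}(y),
\]
and rearranging using the display above produces the stated formula. There is no genuine obstacle here; the only point requiring care is justifying that $\mu$ does not charge the $\H^{N-1}$-negligible set $\partial^{m}E \setminus \partial^{*}E$, which is precisely what Lemma \ref{capacity} guarantees in the bounded divergence-measure case.
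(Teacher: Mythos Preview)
Your proof is correct and follows exactly the paper's approach: the paper simply says to subtract \eqref{ot} from \eqref{it}. You have been more explicit than the paper in justifying the identity $\mu(E)-\mu(E^{1})=\mu(\partial^{*}E)$ via the convention $E=E^{1}\cup\partial^{m}E$, Remark~\ref{rem00}, and the absolute continuity $\|\mu\|\ll\H^{N-1}$ from Lemma~\ref{capacity}, but this is precisely the bookkeeping the one-line argument in the paper is implicitly invoking.
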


We offer the following simple example to illustrate our result. This
example also dramatically demonstrates the difference between the
classical derivative and the weak (distributional) derivative.

\begin{example}
\noindent {\rm Consider the most elementary situation:  \(N=1\),
\(\Omega :=  (-1, 2)\), \(E := [0,1]\), and \(f\) is a
non-decreasing function defined on \((-1,2)\) which is continuous
everywhere except at \(y=0,1\), at which points we assume that $f$
has right-continuity.

\medskip
(i)  Case \(\frac{1}{2}<s<1\). Since \(f\) is in $BV$, we know that
\(f'=\mu \) for some measure \(\mu\). Then, according to Theorem
\ref {main},
\[
\mu (E^{1}) =\mu ((0,1)):=\int_{0+}^{1-}f' =\mathscr{F}_{i}\cdot
\nnu(1)-\mathscr{F}_{i}\cdot \nnu(0),
\]
where \(\mathscr{F}_{i}\cdot \nnu(1)=\lim_{y\to 1-}f(y)\) and
\(\mathscr{F}_{i}\cdot \nnu(0)=f(0+) \). Indeed, the sets
\(A_{k;s}\), with fixed  \(\frac{1}{2}<s<1 \), form a nested family
of open intervals contained in \([0,1]\). The measures \(\sigma_{k}
\) correspond to \(f\) evaluated on the point masses located at
\(y_{k}\);  thus, as in \eqref{needforh}, \(f(y_{k})\) converges to
a limit, \(\mathscr{F}_{i}\cdot \nnu(1)\).

\medskip
(ii)  Case \(0<s<\frac{1}{2}\). Then the sets \(A_{k;s}\),
\(0<s<\frac{1}{2} \), form a nested family of open intervals
containing \([0,1]\). Similar to the above, we have
\[
\mu (E) =\mu ([0,1]):=  \int_{0-}^{1+}f' =\mathscr{F}_{e}\cdot
\nnu(1)-\mathscr{F}_{e}\cdot \nnu(0),
\]
the measures \(\sigma_{k} \) correspond to \(f\) evaluated on the
point masses located at \(y_{k}\), and thus \(\mathscr{F}_{e}\cdot
\nnu(1)=f(1+)\) and \(\mathscr{F}_{e}\cdot \nnu(0)=\lim_{y\to
0-}f(y)\). }
\end{example}

\section{Consistency of the Normal Traces with the Classical Traces}

We now proceed to show the consistency of our normal traces with the
classical traces when $\FF$ is continuous. First we have the following
lemma.

\begin{lemma}\label{consistency}
\label{nolocreo} Let $\mu=\div \FF$ for $\FF \in
\mathcal{DM}^{\infty}(\rn)\cap C(\rn;\rn)$. Then
\[
\norm{\mu} (G)= 0
\]
for any set $G$ that can be written as the graph of a Lipschitz
function $f$.
\end{lemma}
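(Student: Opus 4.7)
The strategy is to slice $\R^N$ along the level sets of a Lipschitz function vanishing on $G$, apply Lemma~\ref{Fuglede} on both sides, and exploit the continuity of $\FF$ to cancel the two boundary contributions across $G$ in the limit. After a localization (it suffices to prove $\|\mu\|(G\cap B)=0$ for each ball $B$) I may assume $G$ is the graph of $f\colon W\to\R$ over a bounded open $W\subset\R^{N-1}$, and introduce the Lipschitz slicing function $\psi(x',x_N):=x_N-f(x')$, so that $G=\psi^{-1}(0)$ and each level set $\psi^{-1}(t)$ is the vertical translate of $G$ by $(0,t)$.

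By the coarea formula, for almost every $t\in\R$ the superlevel set $A_t:=\{\psi>t\}$ has locally finite perimeter with $\partial^{*}A_t=\psi^{-1}(t)$ (up to $\H^{N-1}$-null sets) and interior unit normal $\nnu_t=\nabla\psi/|\nabla\psi|$. Since the slices $\{\partial A_t\}_{t\in\R}$ are pairwise disjoint and $\|\mu\|$ is a Radon measure, $\|\mu\|(\partial A_t)>0$ can hold for at most countably many $t$, so almost every $t$ is \emph{good} in the sense that $A_t$ has finite perimeter and $\|\mu\|(\partial A_t)=0$. For an arbitrary test function $\varphi\in C_c^\infty(\R^N)$, Theorem~\ref{productrule} applied with the smooth factor $\varphi$ shows $\varphi\FF\in\DM^\infty$ with $\div(\varphi\FF)=\varphi\,\mu+\FF\cdot\nabla\varphi$; and since $\varphi\FF$ is continuous, its mollifications converge to it uniformly on compacta, and in particular $\H^{N-1}$-a.e.\ on $\partial^{*}A_t$. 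Both hypotheses of Lemma~\ref{Fuglede} are therefore satisfied by $\varphi\FF$ on each good $A_t$, yielding
\begin{equation*}
\int_{A_t}\varphi\,d\mu+\int_{A_t}\FF\cdot\nabla\varphi\,dx=-\int_{\psi^{-1}(t)}\varphi\,\FF\cdot\frac{\nabla\psi}{|\nabla\psi|}\,d\H^{N-1}.
\end{equation*}

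Next I telescope across $G$: for good values $t_1<0<t_2$, subtracting the identity at $t_2$ from the one at $t_1$ and writing $S_{t_1,t_2}:=A_{t_1}\setminus A_{t_2}$ gives
\begin{equation*}
\int_{S_{t_1,t_2}}\!\varphi\,d\mu+\int_{S_{t_1,t_2}}\!\FF\cdot\nabla\varphi\,dx =\int_{\psi^{-1}(t_2)}\!\varphi\,\FF\cdot\tfrac{\nabla\psi}{|\nabla\psi|}\,d\H^{N-1}-\int_{\psi^{-1}(t_1)}\!\varphi\,\FF\cdot\tfrac{\nabla\psi}{|\nabla\psi|}\,d\H^{N-1}.
\end{equation*}
Letting $t_1\to 0^{-}$ and $t_2\to 0^{+}$ through good values: on the left, $|S_{t_1,t_2}|\to 0$ kills the Lebesgue integral, and continuity of the Radon measure $\|\mu\|$ from above (applied to $S_{t_1,t_2}\searrow G$) yields $\int_{S_{t_1,t_2}}\varphi\,d\mu\to\int_G\varphi\,d\mu$. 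On the right, parametrizing each $\psi^{-1}(t)$ by $x'\mapsto(x',f(x')+t)$ and invoking the uniform continuity of $\varphi\FF$ on compacta together with dominated convergence, both integrals tend to the common limit $\int_G\varphi\,\FF\cdot\tfrac{\nabla\psi}{|\nabla\psi|}\,d\H^{N-1}$ and cancel.

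Thus $\int_G\varphi\,d\mu=0$ for every $\varphi\in C_c^\infty(\R^N)$. By density of $C_c^\infty$ in $C_c$, the signed Radon measure $\mu\rtangle G$ annihilates every $\varphi\in C_c(\R^N)$, and is hence the zero measure; since $\|\mu\|\rtangle G=|\mu\rtangle G|$, this gives $\|\mu\|(G)=0$. The main obstacle in the plan is the cancellation of the two boundary integrals in the telescoping step: this is precisely where the continuity of $\FF$ is indispensable, since without it Corollary~\ref{jump} shows the interior and exterior normal traces on $G$ can differ and allow $\mu$ to genuinely charge the graph.
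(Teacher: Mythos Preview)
Your argument is correct and shares the paper's core idea: trap $G$ between two nearby vertical translates of itself, apply the divergence theorem (Lemma~\ref{Fuglede}) to the slab in between, and use the continuity of $\FF$ to make the opposite-facing boundary contributions cancel as the slab collapses onto $G$. The difference lies in how the two proofs localize. The paper reduces to a compact $K\subset G$, covers a neighborhood of $K$ by parallelepipeds via a Besicovitch--Morse argument, and inside each parallelepiped $I_i$ works with the region $R_t$ bounded by the translates $T_t,B_t$; the lateral faces of $I_i$ contribute an extra term $\alpha_t$ shown separately to vanish as $t\to0$. You instead multiply by a test function $\varphi\in C_c^\infty(\rn)$, so that the field $\varphi\FF$ has compact support and no lateral boundary term appears at all; you then conclude $\int_G\varphi\,d\mu=0$ for every $\varphi$ and deduce $\|\mu\|(G)=0$ by duality. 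Your route avoids the covering step and is somewhat more streamlined; the paper's route is perhaps more concrete in that it works set-by-set rather than via duality.

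One small technical point to tidy up: Lemma~\ref{Fuglede} as stated requires $E\Subset\rn$, but your $A_t=\{\psi>t\}$ is unbounded in the $x_N$-direction even after restricting $W$ to be bounded. This is harmless because $\varphi\FF$ has compact support: apply the lemma instead to $A_t\cap B_R$ for a ball $B_R\supset\spt\varphi$ (choosing $R$ so that $A_t\cap B_R$ has finite perimeter and $\|\div(\varphi\FF)\|(\partial B_R)=0$), and note that the contribution of $\varphi\FF\cdot\nnu$ on $\partial B_R$ vanishes.
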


\begin{proof}
First we have
$$
G:=\{(y',f(y'))\, :\,  y' \in W\subset \mathbb{R}^{N-1} \}.
$$

By regularity of $\mu$, it suffices to show that $\mu(K)=0$ for any
compact set $K \subset G$. Given any compact set $K \subset G$, let
$U_k \subset \rn$ be a sequence of open sets satisfying
\begin{equation}\label{condicion}
\mu(U_k \cap G) \to \mu(K).
\end{equation}
Fix any set $U_k$. We note by Besicovitch's theorem
that $U_k$ can
be written up  to a set of $\norm{\mu}$-measure zero as a countable
union of disjoint open parallelepipeds $I_i^k$ (the fact that we can
use parallelepipeds instead of balls follows from Morse
\cite{apmorse}). Thus, we have
\begin{equation} \label{queserademi}
\medcup_{i=1}^{\infty}I_i^k \subset U_k\quad  \tn{ and}\quad
\norm{\mu}(U_k \setminus \medcup_{i=1}^{\infty}I_i^k)=0.
\end{equation}
Denote $U_k$ simply as $U$ and $I_i^k$ as $I_i$. We fix an $i$ and
note that, for $t$ small enough, the graphs $T_t:=\{(y',
f(y')+t)\,:\, y'\in W\subset \R^{N-1}\}$ and $B_t:=\{(y',
f(y')-t)\,:\, y'\in W\subset \R^{N-1}\}$ are contained in $I_i$. Let
$R_t$ be the region inside $I_i$, bounded above and below by $T_t$
and $B_t$ respectively. For a.e. $t$, we define
$$
\alpha_t= \int_{\po R_t \setminus (T_t \cup B_t)} \FF(y) \cdot
\nnu(y)\, d\H^{N-1}(y)\qquad \text{for a.e. \(t\)},
$$
where $\nnu(y)$ is the interior unit normal to $R_t$ on  $\partial
R_t\setminus(T_t\cup B_t)$. Since Lemma \ref{Fuglede} applies to
$R_t$ for a.e. $t$, we arrive at
\begin{eqnarray}
\mu(R_t)&=&\int_{R_t} \div \FF \nonumber\\
&=&-\int_{B_{t}} \FF(y) \cdot \nnu(y)\, d\H^{N-1}(y) -
\int_{T_{t}} \FF(y) \cdot \nnu(y) \, d\H^{N-1}(y) - \alpha_t \nonumber\\
& =&\int_{T_t}\FF(y', y_n-2t) \cdot \nnu (y)\, d\H^{N-1}(y)
-\int_{T_t}\FF(y',y_n) \cdot \nnu (y)\, d\H^{N-1}(y)
- \alpha_t \nonumber \\
&= & \int_{T_t} (\FF(y', y_n-2t)-\FF(y', y_n))
          \cdot \nnu (y)\, d\H^{N-1}(y) - \alpha_t. \nonumber
\end{eqnarray}
Since $\FF$ is continuous and $\alpha_t \to 0$ as $t \to 0$, we find
that there exists $t_0(\ve,F,G)>0$ such that
$$
\mu(R_t) \leq \ve \qquad \text{for all} \,\, t \leq t_0(\ve,F,G).
$$
Then we have
$$
\mu( I_i \cap G) = \lim_{t \to 0} \mu(R_t) \leq \ve,
$$
which implies $\mu(I_i \cap G)=0$ since $\ve$ is arbitrary.
Therefore, using (\ref{queserademi}), we obtain
$$
 \mu(K)= \lim_{k \to \infty} \mu(U_k \cap G) =
 \lim_{k \to \infty} \sum
\mu(I_i^k \cap G)=0.
$$
\end{proof}

\begin{theorem} If $\FF \in \DM^{\infty}_{loc}(\Omega)$ is
continuous and $E\Subset\Omega$ is a set of finite perimeter, then
$\sigma_i =\overline{F \cdot \nabla u_E}$, and the normal trace
$\mathscr{F}_i\cdot \nnu$ is in fact the classical dot product
$\FF\cdot \nnu$, where $\nnu$ is the interior unit normal to $E$.
\end{theorem}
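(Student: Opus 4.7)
The plan is to verify the identity $\sigma_i = \overline{\FF \cdot \nabla u_E}$ by comparing two expressions for the distribution $\div(\chi_E \FF)$, and then to identify $\overline{\FF \cdot \nabla u_E}$ as the measure $(\FF \cdot \nnu)\,\H^{N-1}\rtangle \partial^{*}E$ by a direct weak$^{*}$-convergence argument that exploits the continuity of $\FF$.

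For the first step, I apply the product rule (Theorem \ref{productrule}) with $g = \chi_E \in BV$ to obtain
\[
\div(\chi_E\FF) \;=\; u_E\,\mu \;+\; \overline{\FF \cdot \nabla u_E},
\]
where $u_E$ is the precise representative taking the values $1$, $\tfrac12$, $0$ on $E^{1}$, $\partial^{*}E$, $E^{0}$, respectively (Lemma \ref{mollifiers2}). Independently, pairing the Gauss-Green theorem (Theorem \ref{above}) against an arbitrary $\varphi \in C_c^{\infty}(\Omega)$ and using that $\int_E \FF\cdot \nabla\varphi = \int_{E^{1}} \FF\cdot \nabla\varphi$ (since $E\setminus E^{1} \subset \partial^{m}E$ has Lebesgue measure zero), I obtain
\[
\div(\chi_E\FF) \;=\; \chi_{E^{1}}\mu \;+\; \sigma_i.
\]
Equating these two identities and using $\mu \ll \H^{N-1}$ (Lemma \ref{capacity} with $p=\infty$), so that $u_E\mu = \chi_{E^{1}}\mu + \tfrac12\,\chi_{\partial^{*}E}\mu$ as measures, yields
\[
\overline{\FF \cdot \nabla u_E} \;=\; \sigma_i \;-\; \tfrac12\,\chi_{\partial^{*}E}\,\mu.
\]
The continuity of $\FF$ now enters through Lemma \ref{consistency}: since $\partial^{*}E$ is $(N-1)$-rectifiable (see \eqref{almostsmooth}), one covers the $C^{1}$-manifold pieces $M_k$ by countably many Lipschitz graphs in local coordinates and applies (a localized form of) Lemma \ref{consistency} together with $\mu \ll \H^{N-1}$ to conclude $\|\mu\|(\partial^{*}E) = 0$. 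The correction term therefore vanishes and $\sigma_i = \overline{\FF \cdot \nabla u_E}$.

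For the second step, Lemma \ref{mollifiers2}(iii) gives $\nabla u_k \stackrel{*}{\rightharpoonup} \nabla\chi_E$ in $\mathcal{M}(\rn)$, and by \eqref{normHaus} together with Definition \ref{reducedboundary}, $\nabla \chi_E = \nnu\,\H^{N-1}\rtangle \partial^{*}E$ as vector measures. Since $\FF$ is continuous, for each $\varphi \in C_c(\Omega)$ the scalar functions $\varphi F_i$ are continuous with compact support, so componentwise weak$^{*}$-convergence and summation yield
\[
\int_\Omega \varphi \,\FF \cdot \nabla u_k \, dy \;\longrightarrow\; \int_{\partial^{*}E} \varphi\,\FF \cdot \nnu \, d\H^{N-1},
\]
i.e. $\overline{\FF \cdot \nabla u_E} = (\FF \cdot \nnu)\,\H^{N-1}\rtangle \partial^{*}E$. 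Combined with Step~1 and the density representation $\sigma_i = (\mathscr{F}_i \cdot \nnu)\,\H^{N-1}\rtangle \partial^{*}E$ from Theorem \ref{main}(I), this gives $\mathscr{F}_i \cdot \nnu = \FF \cdot \nnu$ $\H^{N-1}$-a.e. on $\partial^{*}E$. The main obstacle is the first step: namely, showing that the jump-type contribution $\tfrac12\,\chi_{\partial^{*}E}\mu$ cancels, which requires the essential use of continuity (Lemma \ref{consistency} fails for merely bounded $\FF$, where $\mu$ can concentrate on $(N-1)$-dimensional jump sets), together with a careful localization of Lemma \ref{consistency}, whose statement is global but whose proof only uses local properties and so extends to $\FF \in \DM^{\infty}_{loc}(\Omega) \cap C(\Omega;\rn)$.
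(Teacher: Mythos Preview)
Your proof is correct and reaches the same intermediate identity
\[
\overline{\FF\cdot\nabla u_E}\;=\;\sigma_i - \tfrac12\,\mu\rtangle\partial^{*}E
\]
as the paper, followed by the same use of Lemma~\ref{consistency} and the rectifiability of $\partial^{*}E$ to kill the correction term.  The route to that identity differs, however.  The paper obtains it by splitting $\overline{\FF\cdot\nabla u_E}$ into the $\chi_E$- and $\chi_{\tilde E}$-contributions, identifying these with $\tfrac12\sigma_i$ and $-\tfrac12\sigma_{-}=\tfrac12\sigma_e$ via Theorem~\ref{main}\,\textbf{(h)} (and the discussion around \eqref{compadecete}), and then invoking Corollary~\ref{jump}; you instead compare two expressions for $\div(\chi_E\FF)$ coming from the product rule (Theorem~\ref{productrule}) and the Gauss--Green theorem (Theorem~\ref{above}).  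Your derivation is somewhat more direct, since it bypasses the exterior trace machinery entirely.  For the second step, the paper computes the Radon--Nikodym density $(\mathscr{F}_i\cdot\nnu)(y)$ pointwise via differentiation of measures on balls, whereas you pass to the limit in the weak$^{*}$ sense using that $\varphi F_i\in C_c(\Omega)$ can be tested against $\partial_i u_k\stackrel{*}{\rightharpoonup}\partial_i\chi_E$; both are valid and yours is a shade shorter.  Your remark about localizing Lemma~\ref{consistency} to $\DM^{\infty}_{loc}(\Omega)\cap C(\Omega;\rn)$ is apt and exactly the implicit extension the paper itself relies on.
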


\begin{proof}
We recall that, by definition, $E=E^1 \cup \po^* E$. Denote $\tilde
E= E^0 \cup \po^* E$. Then we have
\begin{eqnarray*}
\overline{\FF \cdot \nabla u_E}&= &\lim_{k \to \infty}
    \int_{\rn}  \FF \cdot \nabla u_k\, dy\\
   &= &  \lim_{k \to \infty}
    \int_{\rn} \chi_E \FF \cdot \nabla u_k\, dy +
\lim_{k \to \infty} \int_{\rn} \chi_{\tilde E} \FF \cdot \nabla u_k\,
dy.
\end{eqnarray*}
If $v_k$ denotes the convolution $\chi_{\tilde E} * \rho_{1/k}$,
since $u_k+ v_k=1$, we obtain
\begin{eqnarray}\label{}
\overline{\FF \cdot \nabla u_E}&= &\lim_{k \to \infty}
    \int_{\rn} \chi_E \FF \cdot \nabla u_k\, dy -
\lim_{k \to \infty} \int_{\rn} \chi_{\tilde E} \FF \cdot \nabla v_k\, dy
\nonumber\\
&=& \frac{\sigma_i}{2}+ \frac{\sigma_e}{2}
= \frac{1}{2}(\sigma_i + \sigma_i - \mu(\po^* E))\nonumber \\
&=& \sigma_i - \frac{1}{2}\mu(\po^* E),\nonumber
\end{eqnarray}
where we used Theorem \ref{main} {\bf{(h)}} and Corollary
\ref{jump}.

Since $\po^* E$ is an $(N-1)$-rectifiable set (see
(\ref{almostsmooth})), it follows from Lemma \ref{nolocreo} that
$$
\norm{\mu}(\po^* E)=0,
$$
that is,
$$
\overline{\FF \cdot \nabla u_E}= \sigma_i.
$$
Thus, for
${\mathcal{H}}^{N-1}$-a.e. $y\in \po^* E$,
\begin{eqnarray*}
(\mathscr{F}_i \cdot \nnu)(y) &=&\lim_{r\to 0}\frac{\overline{
\FF\cdot\nabla u_E}(B(y,r))}{\norm{\nabla\chi_E}(B(y,r))} =
\lim_{r\to 0}\lim_{k \to \infty}
     \frac{\int_{B(y,r)}\FF \cdot \nabla u_k\, dx}{\int_{B(y,r)}d
\norm{\nabla\chi_E}}.
\end{eqnarray*}
Since $\nabla u_k \to \nabla \chi_E$ weak* and $\FF$ is continuous,
and noting that $r_j$ can be chosen such that
$\norm{\nabla\chi_E}(\partial B(y,r_j))=0$, we obtain
\begin{eqnarray*}
(\mathscr{F}_i \cdot \nnu)(y) &=& \lim_{j\to \infty}
\frac{\int_{B(y,r_j)}\FF \cdot \nabla \chi_E}{\int_{B(y,r_j)}d
\norm{\nabla\chi_E}}=\lim_{j\to\infty} \frac{\int_{B(y,r_j)}\FF(x)
\cdot \nnu(x)\,\, d\norm{\nabla
\chi_E}(x)}{\int_{B(y,r_j)}d\norm{\nabla\chi_E}(x)}\\
 &=&\FF(y)\cdot
\nnu(y),
\end{eqnarray*}
by differentiation of measures.
\end{proof}

The following corollary gives more information of the trace
$\sigma_i$ and the level sets $u_k^{-1}(s)$ when $s\to \frac{1}{2}+
$.

\begin{corollary}
The trace measure $ \sigma_i $ given in Theorem {\rm \ref{main}}
satisfies
\begin{eqnarray*}
\sigma_i(\rn)&=&2\lim_{k\to\infty}\int_E \FF \cdot \nabla u_k\, dy
=2\lim_{k \to \infty} \lim_{s \to \frac{1}{2}+} \int_{A_{k;s}} \FF
\cdot \nabla u_k\, dy \\
&=& 2 \lim_{k \to \infty} \int_{A_{k;\frac{1}{2}}} \FF \cdot \nabla
u_k\, dy.
\end{eqnarray*}
\end{corollary}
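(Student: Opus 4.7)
The strategy is to show that each of the three expressions equals $\sigma_i(\rn)$. The first equality follows directly from Theorem~\ref{main}\,(I)\,(h); the third from a pointwise dominated convergence at each fixed $k$; and the identification of the common limit with $\tfrac{1}{2}\sigma_i(\rn)$ follows from the slicing formula of Lemma~\ref{connection} together with Theorem~\ref{GGforC1} and Theorem~\ref{main}\,(I)\,(a),(g).

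For the first equality, I invoke part (h) of Theorem~\ref{main}, namely the weak$^*$ convergence $(2\FF\cdot\nabla u_k)\chi_E\stackrel{*}{\rightharpoonup}\sigma_i=(\mathscr{F}_i\cdot\nnu)\,\H^{N-1}\rtangle\po^*E$ in $\mathcal{M}(\Omega)$. Testing against a cutoff $\varphi\in C_c(\Omega)$ with $\varphi\equiv 1$ on a neighborhood of $\overline{E}$ (available since $E\Subset\Omega$), the left-hand side equals $2\int_E\FF\cdot\nabla u_k\,dy$ for all sufficiently large $k$, because $\nabla u_k$ is supported in an arbitrarily small neighborhood of $\po E\subset\overline{E}$, while the right-hand side equals $\sigma_i(\po^*E)=\sigma_i(\rn)$ since $\sigma_i$ is carried by $\po^*E$ by part (c).

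For the equality between $\lim_k\lim_{s\to 1/2+}\int_{A_{k;s}}$ and $\lim_k\int_{A_{k;1/2}}$, I observe that, for each fixed $k$, the family $\{A_{k;s}\}_{s>1/2}$ is monotone increasing as $s$ decreases to $1/2$, with $A_{k;1/2}\setminus A_{k;s}=\{1/2<u_k\le s\}$ shrinking to the empty set. Dominated convergence with dominator $\|\FF\|_\infty|\nabla u_k|\in L^1(\Omega)$ (integrable since $\nabla u_k$ has compact support because $\chi_E$ does) yields $\lim_{s\to 1/2+}\int_{A_{k;s}}\FF\cdot\nabla u_k=\int_{A_{k;1/2}}\FF\cdot\nabla u_k$ for every $k$, and passing to $\lim_k$ delivers the claimed equality of the iterated limit with the single limit.

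To identify the common value, I apply Lemma~\ref{connection} with $A_{k;1/2}$ in place of $E$: since $A_{k;1/2}\cap u_k^{-1}(t)$ equals $u_k^{-1}(t)$ for $t>1/2$ and is empty for $t\le 1/2$, this yields $\int_{A_{k;1/2}}\FF\cdot\nabla u_k=\int_{1/2}^{1}\sigma_{k;t}(\rn)\,dt$. Theorem~\ref{GGforC1} rewrites $\sigma_{k;t}(\rn)=-\mu(A_{k;t})$ for a.e.\ $t$, and hence $\int_{A_{k;1/2}}\FF\cdot\nabla u_k=-\int_{1/2}^{1}\mu(A_{k;t})\,dt$. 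Passing to $\lim_k$: by Theorem~\ref{main}\,(I)\,(a) I have $|\mu(A_{k;t})-\mu(E^1)|\le\|\mu\|(A_{k;t}\Delta E^1)\to 0$ for a.e.\ $t\in(1/2,1)$, while Lemma~\ref{slicesubded} furnishes the $k$-uniform bound $|\mu(A_{k;t})|\le\|\FF\|_\infty\H^{N-1}(u_k^{-1}(t))\le C\|\FF\|_\infty$. Dominated convergence in $t$ then gives $\lim_k\int_{A_{k;1/2}}\FF\cdot\nabla u_k=-\tfrac{1}{2}\mu(E^1)=\tfrac{1}{2}\sigma_i(\rn)$ via Theorem~\ref{main}\,(I)\,(g). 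The main technical point is supplying the $k$-uniform dominator in this final step, which is exactly the content of Lemma~\ref{slicesubded}; everything else is a packaging of weak$^*$ convergence and monotone exhaustion.
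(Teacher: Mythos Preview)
Your proof is correct and uses the same ingredients as the paper---part~(h) of Theorem~\ref{main}, the coarea identity of Lemma~\ref{connection}, and monotone exhaustion $\bigcup_{s>1/2}A_{k;s}=A_{k;1/2}$---but with a cleaner ordering of the limits. The paper first establishes the reversed iterated limit $\sigma_i(\rn)=2\lim_{s\to 1/2+}\lim_{k\to\infty}\int_{A_{k;s}}\FF\cdot\nabla u_k$ (via $\int_s^1\sigma_{k;t}(\rn)\,dt\to(1-s)\sigma_i(\rn)$) and then simply asserts that the limits can be interchanged, whereas you bypass this by evaluating $\int_{A_{k;1/2}}\FF\cdot\nabla u_k=\int_{1/2}^1\sigma_{k;t}(\rn)\,dt$ directly and justifying the $k\to\infty$ passage via dominated convergence with the uniform bound from Lemma~\ref{slicesubded}. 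Your route is thus more self-contained on precisely the step the paper leaves to the reader. One cosmetic remark: when you invoke Lemma~\ref{connection} ``with $A_{k;1/2}$ in place of $E$'', you are replacing only the domain of integration (keeping $u_k$ the mollification of $\chi_E$); this is just the coarea formula and does not require $A_{k;1/2}$ to have finite perimeter, but it is worth saying so explicitly since the lemma's hypothesis is stated that way.
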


\begin{proof}
Theorem \ref{main} {\bf(h)} shows
\begin{equation}
\sigma_i(\rn) = 2 \lim_{k \to \infty} \int_E \FF \cdot \nabla u_k\,
dy.
\end{equation}
Using Lemma \ref{connection}, we find
\begin{eqnarray*}
\sigma_i(\rn)&=&2 \lim_{s \to \frac{1}{2}+} \int_s^1 \sigma_i(\rn)\,
dt = 2 \lim_{s \to \frac{1}{2}+} \lim_{k \to \infty}\int_s^1
\int_{u_k^{-1}(t)} \FF\cdot\nnu\, d\H^{N-1}dt\\
&=&2 \lim_{s \to \frac{1}{2}+} \lim_{k \to \infty} \int_{A_{k;s}} \FF
\cdot \nabla u_k\, dy.
\end{eqnarray*}
One can easily verify that the limits $s\to\frac {1}{2}+$ and $k \to
\infty$ can be interchanged. Noting that
$\medcup_{s>\frac{1}{2}} A_{k;s} = A_{k;\frac{1}{2}} , $
we conclude
\begin{eqnarray*}
\sigma_i(\rn)=2\lim_{k \to \infty} \lim_{s \to \frac{1}{2}+}
\int_{A_{k;s}} \FF \cdot \nabla u_k\, dy =2\lim_{k \to
\infty}\int_{A_{k;\frac{1}{2}}} \FF \cdot\nabla u_k\, dy.
\end{eqnarray*}
\end{proof}

\section{One-sided  approximation of sets of finite perimeter}
It is well-known that, a set of finite perimeter, \(E\), cannot be
approximated by smooth sets that lie completely in the interior of
\(E\).  For example, consider the open unit disk with a single
radius removed, and let $U$ be the resulting open set. Then the
Hausdorff measure of the boundary of \(U\) is $2 \pi$ plus the
measure of the radius, while the Hausdorff measure of the reduced
boundary is $2 \pi$. Thus, if \(U_{k}\) is an approximating open
subset of $U$, then its boundary will be close to that of boundary
$U$ and so its the Hausdorff measure will be close to $2 \pi$ plus
$1$. Adding more radii, say $m$ of them, will force the
approximating set to have boundaries whose Hausdorff measure close
to $2 \pi$ plus $m$. In general, if we let $K$ denote any compact
subset without interior and of infinite Hausdorff measure, then the
approximating sets will have boundaries whose measures will
necessarily tend to infinity.  On the other hand, the one-sided
approximation is possible for open sets of class \(C^{1}\), see
Theorem \ref{GGforC1}. More generally we have the following:
\begin{proposition}\label{31.1a}
Let \(U\subset \rn \) be an open set with \(\H^{N-1} (\partial U)<
\infty \). Then there exists a sequence of
bounded open
sets \(U_{k}\subset \overline{U_{k} }\subset U\) such that
\begin{enumerate}[\rm (i)]
\item $|U_{k}|= |\overline{U_{k}}|$;
\item \(\abs{U_{k} }\to \abs{U} \);
\item\(\H^{N-1}
(\partial U_{k} )\to \H^{N-1} (\partial U)\).
\end{enumerate}
\end{proposition}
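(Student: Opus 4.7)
The plan is to construct the approximating sets by one-sided truncation of $U$ using the distance-to-boundary function, and to verify the three properties via Federer's coarea formula, monotone convergence, and a projection argument at the level sets of that distance. Define $d(x):=\dist(x,\rn\setminus U)$, which is $1$-Lipschitz on $\rn$, strictly positive on the open set $U$, and zero on $\rn\setminus U$. For parameters $t>0$ and $R>0$, set
\begin{equation*}
  U_{t,R} := \{x \in U : d(x) > t\} \cap B(0,R),
\end{equation*}
which is open and bounded, and $\overline{U_{t,R}} \subset \{d\ge t\}\cap\overline{B(0,R)}\subset U$, so the compact-containment requirement is automatic. The sequence $U_k$ will be of the form $U_{t_k, R_k}$ for carefully chosen $t_k \downarrow 0$ and $R_k \uparrow \infty$.

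For properties (i) and (ii), I apply Federer's coarea formula (Theorem \ref{coarea}) to the Lipschitz function $d$, yielding
\begin{equation*}
  \int_0^\infty \H^{N-1}(d^{-1}(s))\, ds \;=\; \int_{\rn} |\nabla d|\,dx \;\le\; \mathcal{L}^N(U).
\end{equation*}
Hence the set of good levels $s$ with $\H^{N-1}(d^{-1}(s))<\infty$ has full measure in $(0,\infty)$, and by Fubini $|d^{-1}(s)|=0$ for a.e.\ $s$. A parallel argument via Lemma \ref{transversal} gives a full-measure set of good radii along which $\H^{N-1}(U\cap\po B(0,R))<\infty$ and $|U\cap\po B(0,R)|=0$. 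Selecting $(t_k,R_k)$ from these sets forces $|\po U_k|=0$, whence $|U_k|=|\overline{U_k}|$, establishing (i). Since $d>0$ throughout $U$, we have $U_k \nearrow U$ as $t_k\downarrow 0$ and $R_k\uparrow\infty$, so monotone convergence gives $|U_k|\to|U|$, establishing (ii).

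The main obstacle is property (iii). The lower bound is straightforward: the $L^1$-convergence $\chi_{U_k}\to\chi_U$ combined with the lower semicontinuity of perimeter gives
\begin{equation*}
  \liminf_{k\to\infty} \H^{N-1}(\po U_k) \;\ge\; \|\nabla \chi_U\|(\rn) \;=\; \H^{N-1}(\po^* U).
\end{equation*}
The hard part is the matching upper bound. I would use the nearest-point projection $\pi_k \colon \po U_k \to \po U$ sending each $x\in\{d=t_k\}$ to a closest point $\pi_k(x)\in\rn\setminus U$ (necessarily in $\po U$). On the $\H^{N-1}$-full subset of $\po U_k$ where $d$ is differentiable, $\pi_k$ is well-defined and essentially $1$-Lipschitz, a consequence of $|\nabla d|=1$ a.e.\ together with the gradient-flow structure of $d$; the area formula then yields $\H^{N-1}(\po U_k)\le N(\pi_k)\cdot \H^{N-1}(\po U)$, where $N(\pi_k)$ is the essential multiplicity. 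The crux of the argument is to show $N(\pi_k)\to 1$ as $t_k \to 0$, i.e.\ that $\pi_k$ becomes essentially injective in the limit; this is exactly where the hypothesis $\H^{N-1}(\po U)<\infty$ is used, ruling out thick skeleta in $\rn\setminus U$ that would produce persistent multi-sided approach to $\po U$ and inflate $\H^{N-1}(\{d=t_k\})$. Once this is done, the two bounds pinch together to give $\H^{N-1}(\po U_k)\to\H^{N-1}(\po U)$.
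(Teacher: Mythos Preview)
Your approach via level sets of the distance function is genuinely different from the paper's, and it runs into a real obstruction at step (iii). The paper instead covers $\partial U$ by finitely many small balls $B_1,\dots,B_m$ whose boundary measures sum to at most $\H^{N-1}(\partial U)+\tfrac1k$, sets $V_k=\bigcup_i B_i$, and takes $U_k:=U\setminus\overline{V_k}$. Because $\partial U\subset V_k$ is open, one has $\partial U_k\subset\partial V_k\subset\bigcup_i\partial B_i$, which directly controls $\H^{N-1}(\partial U_k)$ from above without any projection or multiplicity argument.

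Your distance-level-set construction cannot achieve (iii) in general, and the slit-disk example in the paragraph preceding this proposition already shows why. Take $U=B(0,1)\setminus([0,1)\times\{0\})$ in $\mathbb{R}^2$, so that $\partial U$ is the unit circle together with the radius and $\H^{1}(\partial U)=2\pi+1$. For small $t>0$ the level set $\{d=t\}$ contains two nearly-unit horizontal segments at heights $\pm t$ (one on each side of the slit) together with an arc of radius $1-t$, and $\H^{1}(\{d=t\})\to 2\pi+2$, not $2\pi+1$. The slit is approached from both sides, so its contribution is doubled. In your projection language, the nearest-point map $\pi_k$ has multiplicity exactly $2$ along the entire slit for every $k$, so $N(\pi_k)\not\to 1$; the hypothesis $\H^{N-1}(\partial U)<\infty$ does nothing to rule this out. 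More generally, nearest-point projection onto a non-convex closed set is not $1$-Lipschitz, and your ``essentially injective in the limit'' claim fails whenever $\partial U$ contains an $(N-1)$-dimensional piece that is two-sided in $U$, i.e., lies in $U^{1}$ rather than in $\partial^{*}U$. Your lower bound has the same defect: lower semicontinuity of perimeter only gives $\liminf\ge\H^{N-1}(\partial^{*}U)$, which in the slit example is $2\pi<2\pi+1$.
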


\begin{proof}
By definition, for each integer \(k\), there exists a covering of
\(\partial U\) by balls
\[
\partial U\subset \medcup
  B_{i}(r_{i}),
\]
each with radius \(r_{i}\), such that
\[
\sum _{i=1}^{\infty }\H^{N-1}\big( \partial B_{i}(r_{i})\big )=\sum
_{i=1}^{\infty }\omega_{N-1}r_{i}^{N-1}<\H^{N-1} (\partial
U)+\frac{1}{k},
\]
where \(\omega_{N-1}\) is the \(\H^{N-1} \) measure of the boundary
of the unit ball in \(\rn \). Since \(\partial \Omega \) is compact,
the covering may be taken as a finite covering, say by \(m\) of
them, $B_{1}(r_{1}),B_{1}(r_{2}), \dots$, $B_{m}(r_{m})$. Then the
open set \(V_{k} :=  \medcup B_{i}(r_{i}) \) has the property that
\[
\partial V_{k}\subset\medcup_{i=1}^{m} \partial B_{i}(r_{i})
\]
and therefore that
\[
\H^{N-1} (\partial V_{k})\le \H^{N-1} (\medcup_{i=1}^{m} \partial
B_{i}(r_{i})) \le\sum _{i=1}^{\infty
}\omega_{N-1}r_{i}^{N-1}<\H^{N-1} (\partial U)+\frac{1}{k}.
\]
Thus, the open sets \(U_{k} := U \setminus \overline V_{k} \subset
U\) will satisfy our desired result, except that they are not
smooth.
\end{proof}

Given an arbitrary set of finite perimeter, \(E\), we know from \S 4
that \(E\) can be approximated by sets with smooth boundaries
essentially from the measure-theoretic interior of $E$, that is, a
one-sided approximation can ``almost'' be achieved (see Theorem
\ref{main}{\bf (e)}). On the other hand, the next result shows that,
if \(E\) is sufficiently regular, there does, in fact, exist a
one-sided approximation. The condition of regularity we impose is
similar to Lewis's {\it uniformly flat} condition in potential
theory \cite{Lewis}.

\begin{theorem} \label{estesi}
Suppose that $E$ is a bounded set of finite perimeter with the
property that, for all \(y\in \partial E\), there  are positive
constants \(c_{0}\) and \(r_{0}\) such that
\begin{equation}\label{8.1}
\frac{\abs{E^{0}\cap B(y,r)}}{|B(y,r)|}\ge c_{0} \:\qquad \text{for
all \(r\le r_{0}\)}.
\end{equation}
Then there exists $t \in (0,1)$ such that
\begin{equation}\label{8.2}
 A_{k;t} \Subset E \qquad \text{ for large \(k\)}.
\end{equation}
\end{theorem}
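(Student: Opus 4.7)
My plan is to prove a pointwise bound of the form $u_k(y) \le 1-\delta$ for every point $y$ outside the topological interior of $E$, uniformly for large $k$. Once this is achieved, choosing any $t\in(1-\delta,1)$ (say $t=1-\delta/2$) forces $\overline{A_{k;t}}\subset\{u_k\ge t\}\subset\mathrm{int}(E)$, and boundedness of $E$ then gives $\overline{A_{k;t}}$ compact, establishing \eqref{8.2}.

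First I would upgrade the hypothesis \eqref{8.1} from $\partial E$ to the entire complement of $\mathrm{int}(E)$. Indeed, any $y\notin\mathrm{int}(E)$ lies either in $\partial E$, in which case \eqref{8.1} applies directly, or in $\mathrm{int}(E^{c})$, where for sufficiently small $r$ the ball $B(y,r)$ is entirely contained in $E^{c}$ and hence (since $E^{c}=E^{0}$ up to a Lebesgue null set) satisfies $|E^{0}\cap B(y,r)|=|B(y,r)|\ge c_{0}|B(y,r)|$. So there is a single $r_0>0$ and $c_0>0$ for which
\[
|E^{0}\cap B(y,r)|\ge c_{0}|B(y,r)|\qquad\text{for all }y\notin\mathrm{int}(E),\ r\le r_{0}.
\]

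Next I would exploit the fact that the standard symmetric mollifying kernel $\rho$ of Definition \ref{mollifiers} is nonnegative, continuous, and has integral one, so there exist $r_{*}\in(0,1)$ and $c_{\rho}>0$ with $\rho(z)\ge c_{\rho}$ on $B(0,r_{*})$. Using $\chi_{E^{c}}=\chi_{E^{0}}$ a.e.\ and changing variables $z=k(y-x)$, I would compute
\[
1-u_{k}(y)=\int_{B(0,1)}\rho(z)\,\chi_{E^{0}}(y-z/k)\,dz\ \ge\ c_{\rho}k^{N}\,|E^{0}\cap B(y,r_{*}/k)|.
\]
For $y\notin\mathrm{int}(E)$ and $k\ge r_{*}/r_{0}$, the improved density estimate from the first step yields $|E^{0}\cap B(y,r_{*}/k)|\ge c_{0}\omega_{N}(r_{*}/k)^{N}$, and the $k^{N}$ cancels to give
\[
1-u_{k}(y)\ \ge\ c_{\rho}c_{0}\omega_{N}r_{*}^{N}=:\delta>0,
\]
uniformly in $y\notin\mathrm{int}(E)$ and in $k\ge r_{*}/r_{0}$.

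Finally, fix $t:=1-\delta/2\in(1-\delta,1)$. If $u_{k}(y)\ge t>1-\delta$, the previous bound forces $y\in\mathrm{int}(E)$, so $\overline{A_{k;t}}\subset\{u_{k}\ge t\}\subset\mathrm{int}(E)\subset E$; boundedness of $E$ makes $\overline{A_{k;t}}$ compact, yielding \eqref{8.2}. The main obstacle I expect is the extension step: the density hypothesis is only posited at topological boundary points, and one must verify that it transfers both to nearby interior-complement points (handled by the trivial case above) and, more subtly, that the mollification $u_{k}(y)$ for $y$ slightly off $\partial E$ can still be controlled by the density of $E^{0}$ in a ball centered at $y$ itself — this is the reason for choosing $z_{0}=0$ in the kernel's lower bound, which relies crucially on the standard mollifier being positive at the origin.
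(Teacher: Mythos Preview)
Your argument follows essentially the same route as the paper: bound $1-u_k(y)$ below by a positive constant outside the interior of $E$ using the kernel's positivity near the origin together with the density hypothesis, then pick $t$ just below $1$. The paper works only at $y\in\partial E$ (with a kernel satisfying $\rho=1$ on $B(0,\tfrac12)$) and finishes with a connected-components argument, whereas you extend the bound to all of $\rn\setminus\mathrm{int}(E)$ directly.

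There is, however, one gap in your extension step. For $y\in\mathrm{int}(E^{c})$ you note that $B(y,r)\subset E^{c}$ for \emph{sufficiently small} $r$, but that threshold depends on $d(y,\partial E)$, so your assertion of a \emph{single} uniform $r_{0}$ over all of $\rn\setminus\mathrm{int}(E)$ is not yet justified. The fix is short: for $y\in\mathrm{int}(E^{c})$ and $r\le r_{0}$, set $d=d(y,\partial E)$; if $d\ge r/2$ then $B(y,r/2)\subset E^{0}$ gives $|E^{0}\cap B(y,r)|\ge 2^{-N}|B(y,r)|$, while if $d<r/2$ then choosing $y'\in\partial E$ with $|y-y'|=d$ one has $B(y',r/2)\subset B(y,r)$ and the hypothesis at $y'$ (applicable since $r/2\le r_{0}$) gives $|E^{0}\cap B(y,r)|\ge |E^{0}\cap B(y',r/2)|\ge c_{0}2^{-N}|B(y,r)|$. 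Either way the uniform density bound holds with $c_{0}$ replaced by $c_{0}2^{-N}$, and the rest of your proof goes through unchanged.
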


\begin{proof}
Choose a mollifying kernel $\rho$ such that $\rho=1$ on
$B(0,\frac{1}{2})$. If $y \in \po E$, we have
\begin{align*}
 v_k(y):= \charfn{\rn \setminus E}* \rho_{\ve_k} (y)
 & =\frac{1}{\ve_k^N}\int_{B(y,\ve_k)} \charfn{\rn \setminus E}(x)
 \rho(\frac{x-y}{\ve_k})\, dx \\
    & \ge\frac{1}{\ve_k^N} \int_{B(y,\frac{\ve_k}{2})}
  \charfn{\rn \setminus E}(x)\, dx\\
&  =\frac {\vert (\rn \setminus E) \cap B(y, \frac{\ve_k}{2})
\vert}{\ve_k^N}\\
&=\frac {\vert E^{0} \cap B(y, \frac{\ve_k}{2}) \vert}{\ve_k^N} \ge
c_{0}/2^{N}:= \tilde c_{0},
\end{align*}
where $0<\tilde{c_0}<1$ depends only on the dimension $N$ and is
independent of the point $y$. Note that \(u_{k}(y)+v_{k}(y)=1\) for
all \(y\in \rn \).
Therefore, for all $y\in \po E$,
$$
u_k(y)=1-v_k(y) \leq 1- \tilde{c_0}.
$$
Thus, taking \(1-\tilde c_{0}<t<1\), we see that \(A_{k;t}\cap
\partial E=\emptyset\). Consequently, each connected component
 of the open set \(A_{k;t}\) lies either in the interior of \(E\) or
in
its exterior, and thus must lie in its interior.
\end{proof}

\begin{corollary} \label{yacasiesta}
Let $E$ be a bounded set of finite perimeter with uniform Lipschitz
boundary. Then there exists $T \in (0,1)$ such that $A_{k;T} \Subset
E$.
\end{corollary}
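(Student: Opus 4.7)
The plan is to deduce the corollary directly from Theorem \ref{estesi} by verifying that the uniform Lipschitz boundary hypothesis implies the one-sided density condition \eqref{8.1} with constants $c_0,r_0$ that do not depend on the base point $y\in\partial E$. Once \eqref{8.1} is established, Theorem \ref{estesi} produces a threshold $t\in(0,1)$ with $A_{k;t}\Subset E$ for large $k$, and any such $t$ may be taken as $T$.

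The core geometric step is as follows. Uniform Lipschitz boundary means there exist constants $L>0$ and $r_0>0$ such that for every $y\in\partial E$ one can find a rigid motion placing $y$ at the origin, in which $E\cap B(y,r_0)$ coincides with $\{(x',x_N)\in B(0,r_0): x_N>f(x')\}$ for some Lipschitz function $f$ with $f(0)=0$ and $\mathrm{Lip}(f)\le L$. I would then observe that the ``downward cone''
\[
\mathcal{C}:=\{(x',x_N):x_N<-L\,|x'|\}
\]
lies entirely in the complement of $E$ near $y$, because any point in $\mathcal{C}$ satisfies $x_N<-L|x'|\le -|f(x')|\le f(x')$. Hence, for every $r\le r_0$,
\[
B(y,r)\cap\mathcal{C}\subset B(y,r)\setminus E,
\]
and by scaling invariance of the cone,
\[
\frac{|B(y,r)\cap\mathcal{C}|}{|B(y,r)|}=\frac{|B(0,1)\cap\mathcal{C}|}{|B(0,1)|}=:c_0>0,
\]
with $c_0=c_0(L,N)$ independent of $y$ and $r$. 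Since $E^0$ and $\R^N\setminus E$ agree up to a Lebesgue-null set, this gives \eqref{8.1} uniformly on $\partial E$.

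With condition \eqref{8.1} in hand, Theorem \ref{estesi} applies verbatim and furnishes some $T\in(0,1)$ such that $A_{k;T}\Subset E$ for all sufficiently large $k$. The main (and only nontrivial) obstacle is the geometric verification above; it is routine but depends crucially on the \emph{uniformity} of the Lipschitz constant so that $c_0$ and $r_0$ can be chosen independently of the base point. Without uniformity, $c_0$ could degenerate as $y$ varies and \eqref{8.1} would fail to hold with constants common to all $y\in\partial E$.
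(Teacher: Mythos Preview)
Your proposal is correct and follows essentially the same route as the paper: both verify the hypothesis of Theorem \ref{estesi} by observing that a uniform Lipschitz boundary yields a uniform exterior cone at every point of $\partial E$, which in turn gives the density lower bound \eqref{8.1} with constants independent of the base point. Your argument is simply more explicit about constructing the cone $\{x_N<-L|x'|\}$ from the Lipschitz graph representation, whereas the paper just asserts the cone condition directly.
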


\begin{proof}
Since $E$ has a uniform Lipschitz boundary, for each \(x\in
\partial E \), there is a finite cone, \(C_{x} \), with vertex \(x
\) that completely lies in the complement of \(E\). Each cone
\(C_{x} \) is assumed to be congruent to a fixed cone \(C \). This
implies that the hypothesis of Theorem \ref{estesi} is satisfied.
Therefore, there exists \(0<T<1 \) such that \(u_{k}(y)<T \) for all
$k$ and all $y\in\partial E$.
\end{proof}

\begin{definition}
An open set \(U \subset \rn \) is called an {\bf extension domain}
  for \(\FF \in \mathcal{DM}^{\infty }(U ) \),  if there exists a field \(\FF
^{*}\in \mathcal{DM}^{\infty
  }(\rn ) \) such that \(\FF =\FF ^{*}\) on \(U \).
 \end{definition}

\begin{theorem} An open set
\(U \) satisfying $\mathcal{H}^{N-1}(\partial U)<\infty$ is an
extension domain for any $\FF \in \mathcal{DM}^{\infty }(U )$. More
generally, if $\FF\in \DM^\infty_{loc}(\Omega)$, then any open set
of finite perimeter $U\Subset\Omega$  is an extension domain for
$\FF$.
\end{theorem}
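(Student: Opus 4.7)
Plan: The extension will be the zero extension
\[
\FF^*(y) := \begin{cases} \FF(y) & y \in U, \\ 0 & y \in \rn \setminus U, \end{cases}
\]
which lies in $L^\infty(\rn;\rn)$ with $\|\FF^*\|_\infty = \|\FF\|_\infty$; all that needs to be checked is that $\div\FF^*$ is a Radon measure of finite total variation on $\rn$.

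I would dispose of the second, more general, assertion first, as it is easier and will feed into the first. Given $\FF \in \DM^\infty_{\loc}(\Omega)$ and $U\Subset\Omega$ of finite perimeter, for any $\varphi \in C_c^\infty(\rn)$ I pick a cutoff $\eta \in C_c^\infty(\Omega)$ with $\eta \equiv 1$ on a neighborhood of $\overline U$ and apply the Gauss--Green theorem (Theorem~\ref{above}) to $\FF$, $U$, and the test function $\varphi\eta \in C_c^\infty(\Omega)$. Since $\nabla\varphi = \nabla(\varphi\eta)$ on $\overline U$ and $|U\setminus U^1|=0$, the identity rearranges to
\[
\div\FF^*(\varphi) \;=\; -\int_U \FF\cdot\nabla\varphi \;=\; \int_{U^1}\varphi\,d\mu + \int_{\partial^*U}\varphi\,(\mathscr{F}_i\cdot\nnu)\,d\H^{N-1},
\]
identifying $\div\FF^* = \mu\rtangle U^1 + (\mathscr{F}_i\cdot\nnu)\,\H^{N-1}\rtangle\partial^*U$ as Radon measures. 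The a priori bound $\|\mathscr{F}_i\cdot\nnu\|_\infty \leq \|\FF\|_\infty$ from Theorem~\ref{main}(I)(i) gives $\|\div\FF^*\|(\rn) \leq \|\mu\|(U) + \|\FF\|_\infty\,\H^{N-1}(\partial^*U) < \infty$.

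For the first assertion I only have $\FF \in \DM^\infty(U)$ and $\H^{N-1}(\partial U)<\infty$, with no ambient enlargement on which to invoke Gauss--Green at $\partial U$. My plan is to exhaust $U$ from within and pass to a limit. Using Proposition~\ref{31.1a}, I select bounded open sets $U_k$ with $\overline{U_k}\subset U$, $|U_k|\to|U|$, and $\H^{N-1}(\partial U_k)\to\H^{N-1}(\partial U)$; these automatically have finite perimeter (since $\partial^m U_k \subset \partial U_k$) and satisfy $U_k \Subset U$. Applying the second assertion with $\Omega := U$ to each $U_k$ extends $\FF\,\charfn{U_k}$ to $\FF^*_k \in \DM^\infty(\rn)$ with
\[
\|\div\FF^*_k\|(\rn) \;\leq\; \|\mu\|(U) + \|\FF\|_\infty\,\H^{N-1}(\partial U_k) \;\leq\; C
\]
uniformly in $k$. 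Since $|U\setminus U_k|\to 0$ and $\FF\in L^\infty$, we have $\FF^*_k \to \FF^*$ in $L^1_{\loc}(\rn)$, so $\div\FF^*_k \to \div\FF^*$ in $\mathcal D'(\rn)$; this limit must coincide with any weak$^*$ subsequential limit in $\mathcal{M}(\rn)$ furnished by the uniform bound, exhibiting $\div\FF^*$ as a Radon measure of total variation at most $C$.

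The hard part is the first assertion, precisely because no extrinsic $\DM^\infty$ structure is available across $\partial U$. The decisive ingredient is the asymptotic equality $\H^{N-1}(\partial U_k)\to\H^{N-1}(\partial U)$ from Proposition~\ref{31.1a}, which together with the sup-norm control of the interior normal trace from Theorem~\ref{main} supplies exactly the uniform total-variation bound needed to pass to the limit.
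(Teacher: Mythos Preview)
Your proposal is correct and follows essentially the same approach as the paper: both use the zero extension, invoke Proposition~\ref{31.1a} to approximate $U$ from within in the first case, apply the Gauss--Green theorem (Theorem~\ref{main}/\ref{above}) on the approximants to get uniform total-variation bounds, and pass to the limit. Your organization differs only cosmetically---you prove the second assertion first and then invoke it as a black box for the first, and you make explicit the cutoff $\eta$ needed to reduce test functions in $C_c^\infty(\rn)$ to ones in $C_c^\infty(\Omega)$---but the substance is the same.
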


\begin{proof} We define an extension of \(\FF \) by
\[
\FF^{*}(y):= \charfn{U }(y) \FF(y) \quad\text{for all \(y\in \rn \).}
\]
According to Definition \ref{dmfield}, it suffices to
  show that
\[
 \sup\left\{\int_{\rn }\FF^{*} \cdot \nabla \varphi:\abs{ \varphi
    }\le1,  \varphi \in
    C^{\infty}_{c}(\rn  ) \right \}<\infty.
\]

We consider first the case $\mathcal{H}^{N-1}(U)<\infty$. Let $U_k$
be the sequence of approximate sets given in Proposition
\ref{31.1a}. Therefore, for any $\varphi\in C_c^\infty(\rn)$ with
$|\varphi|\le 1$, we employ our general Gauss-Green theorem, Theorem
\ref{main}, to obtain
\[
\int_{U_k} \FF \cdot \nabla \varphi\, dy+\int_{U_k}\varphi\, \div
{\FF }=-\int_{\partial U_k}\varphi\, \mathscr{F}_i\cdot \nnu \di
\H^{N-1}.
\]
Thus,
\begin{align*}
\int_{U_k} \FF\cdot \nabla \varphi\, dy &=-\int_{U_k}\varphi\, \div
{\FF } -\int_{\partial U_k}\varphi \,  \mathscr{F}_i\cdot \nnu
\di \H^{N-1}\\
&\le \|\div {\FF}\|(U_k)+\norm{\FF
}_{\infty}\mathcal{H}^{N-1}(\partial U_k)\\
&\le \|\div {\FF}\|(U)+\norm{\FF
}_{\infty}\mathcal{H}^{N-1}(\partial U_k).
\end{align*}
Letting $k\to\infty$, we obtain
$$
\int_{U} \FF\cdot \nabla \varphi\, dy \le \|\div
{\FF}\|(U)+\norm{\FF }_{\infty}\mathcal{H}^{N-1}(\partial U)<\infty.
$$
Thus,
$$
\int_{U} \FF^*\cdot \nabla \varphi\, dy=\int_{U} \FF\cdot \nabla
\varphi\, dy<\infty.
$$

We now consider the case that $U\Subset \Omega$ is a set of finite
perimeter and $\FF\in \DM^\infty_{loc}(\Omega)$. Proceeding as above
and using Theorem \ref{main},
\begin{align*}
\int_{\rn} \FF^* \cdot \nabla \varphi\, dy &= \int_{U} \FF^*\cdot
\nabla \varphi\, dy \\
&=-\int_{U}\varphi\, \div {\FF } -\int_{\partial^* U}\varphi \,
\mathscr{F}_i\cdot \nnu
\di \H^{N-1}\\
&\le \|\div {\FF}\|(U)+\norm{\FF
}_{\infty}\mathcal{H}^{N-1}(\partial^* U)<\infty.
\end{align*}
\end{proof}

\begin{cor} [Chen-Torres \cite{ChenTorres}]
Let \(U\subset \rn \) be
a  bounded, open set with \(\H^{N-1}(\partial U)<\infty\). Let
  \(\FF_{1}\in \mathcal{DM}^{\infty } (U)\) and
\(\FF_{2}\in \mathcal{DM}^{\infty}(\rn\setminus \overline {U}).\)
Then, with
\[
\FF(y):=  \begin{cases}
\FF_{1}(y)&\quad \text{$y\in U$},\\
\FF_{2}(y)&\quad \text{$y\in \rn \setminus \overline{U}$,}\\
\end{cases}
\]
we have
\[
\FF\in \mathcal{DM}^{\infty}(\rn).
\]
\end{cor}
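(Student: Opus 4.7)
The field $\FF$ lies in $L^{\infty}(\rn;\rn)$ since $\FF_1$ and $\FF_2$ are bounded and $\partial U$ has zero Lebesgue measure (because $\H^{N-1}(\partial U)<\infty$). It therefore remains only to verify that $\div\FF$ is a finite signed Radon measure on $\rn$. The plan is to use the natural splitting
\[
\FF = \chi_{U}\FF_{1} + \chi_{\rn\setminus\overline{U}}\FF_{2} =: \FF_{1}^{*}+\FF_{2}^{*}.
\]
The preceding extension theorem, applied to the bounded open set $U$, immediately gives $\FF_{1}^{*}\in\DM^{\infty}(\rn)$, together with the estimate $\norm{\div\FF_{1}^{*}}(\rn)\le\norm{\div\FF_{1}}(U)+\norm{\FF_{1}}_{\infty}\H^{N-1}(\partial^{*}U)$. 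The task thereby reduces to showing $\FF_{2}^{*}\in\DM^{\infty}(\rn)$, equivalently to bounding $\abs{\int_{\rn}\FF_{2}^{*}\cdot\nabla\varphi\,dy}$ uniformly over $\varphi\in C_c^{\infty}(\rn)$ with $\abs{\varphi}\le 1$.

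The extension theorem cannot be invoked directly on $\rn\setminus\overline{U}$ because this set is unbounded, so I would instead mimic its proof via a truncation. Given such a $\varphi$, pick $R>0$ so that $\overline{U}\cup\spt\varphi\Subset B_R$, and let $d>0$ denote the distance from $\spt\varphi$ to $\partial B_R$. Set $V_R:=B_R\setminus\overline{U}$, which is bounded with $\H^{N-1}(\partial V_R)\le\H^{N-1}(\partial U)+\H^{N-1}(\partial B_R)<\infty$. A refinement of the covering in the proof of Proposition~\ref{31.1a}---obtained by separately covering $\partial U$ and $\partial B_R$, and requiring that the balls near $\partial B_R$ have radius less than $d/2$---produces smoothly bounded open sets $W_k\Subset V_R\Subset\rn\setminus\overline{U}$ whose boundary decomposes into a ``$\partial U$-part,'' of $\H^{N-1}$-measure at most $\H^{N-1}(\partial U)+\epsilon$, and a ``$\partial B_R$-part'' that is disjoint from $\spt\varphi$.

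Since $W_k\Subset\rn\setminus\overline{U}$ and $\FF_{2}\in\DM^{\infty}_{\loc}(\rn\setminus\overline{U})$, the Gauss--Green theorem (Theorem~\ref{above}) yields
\[
\int_{W_k}\FF_{2}\cdot\nabla\varphi\,dy+\int_{W_k}\varphi\,\di\div\FF_{2}=-\int_{\partial W_k}\varphi\,(\mathscr{F}_{2,i}\cdot\nnu)\,\di\H^{N-1}.
\]
Because $\varphi\equiv 0$ on the ``$\partial B_R$-part'' of $\partial W_k$, the boundary term is bounded by $\norm{\FF_{2}}_{\infty}(\H^{N-1}(\partial U)+\epsilon)$; passing to the limit $k\to\infty$ and then $\epsilon\to 0$ yields
\[
\Bigl|\int_{V_R}\FF_{2}\cdot\nabla\varphi\,dy\Bigr|\le\norm{\div\FF_{2}}(\rn\setminus\overline{U})+\norm{\FF_{2}}_{\infty}\,\H^{N-1}(\partial U),
\]
uniformly in $R$ and $\varphi$. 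Hence $\FF_{2}^{*}\in\DM^{\infty}(\rn)$, and $\FF=\FF_{1}^{*}+\FF_{2}^{*}\in\DM^{\infty}(\rn)$ as required.

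The principal obstacle is precisely the refined covering, engineered to segregate the $\partial U$- and $\partial B_R$-contributions in $\partial W_k$. Without this separation, the boundary term in the Gauss--Green formula would involve the full surface measure $\H^{N-1}(\partial B_R)\sim R^{N-1}$ and would destroy any uniform bound; with it, the vanishing of $\varphi$ near $\partial B_R$ kills that piece and leaves only the intrinsic quantity $\H^{N-1}(\partial U)$. Every other ingredient---the extension theorem, Theorem~\ref{above}, and the finiteness of $\norm{\div\FF_{2}}$ on $\rn\setminus\overline{U}$ and of $\H^{N-1}(\partial U)$---then combines in a routine limit passage.
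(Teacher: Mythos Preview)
Your proof is correct and uses the same decomposition $\FF=\FF_1^{*}+\FF_2^{*}$ with $\FF_j^{*}=\chi_{\cdot}\FF_j$ that the paper uses. The paper's argument is just two lines: apply the extension theorem separately to $\chi_{U}\FF_{1}$ and to $\chi_{\rn\setminus\overline{U}}\FF_{2}$, then add. You noticed, rightly, that the second application is not literally covered by the theorem as proved, since $\rn\setminus\overline{U}$ is unbounded while the Gauss--Green step (Theorem~\ref{above}) is stated for bounded $E\Subset\Omega$. Your truncation by $B_{R}$ and the refined covering that segregates the $\partial U$--part from the $\partial B_{R}$--part of $\partial W_{k}$ is exactly the bookkeeping needed to make that second application rigorous: the key point is that $\partial(\rn\setminus\overline{U})=\partial U$ is compact and $\varphi$ vanishes near $\partial B_{R}$, so the spurious boundary contribution disappears and the surviving bound involves only $\H^{N-1}(\partial U)$.

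In short, the paper tacitly assumes the extension theorem applies to $\rn\setminus\overline{U}$ as stated; you supply the missing half-page that justifies this. The approaches are not genuinely different---yours is the careful version of theirs.
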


\begin{proof} Applying the previous result to
\[
\FF^{*}_{1}:= \charfn{U}(y) \FF_{1}(y) \quad\text{for all \(y\in \rn
\)}
\]
and
\[
\FF^{*}_{2}:= \charfn{\rn \setminus \overline{U}} \FF_{2}(y)
\quad\text{for all \(y\in \rn \)},
\]
we see that
\[
\FF =\FF^{*} _{1}+\FF^{*} _{2}\qedhere
\]
\end{proof}

\medskip
\section{Cauchy Fluxes and Divergence-Measure
Fields}\label{sec:cauchy}

The physical principle of balance law of the form
\begin{equation}
\int_{\partial E} f(y, \nnu(y))\, d\H^{N-1}(y) +\int_{E} b(y)\, dy =0
\label{balance}
\end{equation}
is basic in all of classical physics. Here, $\nnu(y)$ is the
interior unit normal to the boundary $\partial E$ of $E$. In
mechanics, $f$ represents the surface force per unit area on
$\partial E$, while in thermodynamics $f$ gives the heat flow per
unit area across the boundary $\partial E$.

\medskip
In 1823, Cauchy \cite{Cauchy1} (also see \cite{Cauchy2}) established
the {\it stress theorem} that is probably the most important result
in continuum mechanics: If both $f(y,\nnu(y))$, defined for each $y$
in an open region $\Omega$ and every unit vector $\nnu$, is
continuous in $y$ and $b(y)$ is uniformly bounded on $\Omega$, and
if (\ref{balance}) is satisfied for every smooth region
$E\Subset\Omega$, then $f(y,\nnu)$ must be linear in $\nnu$. The
{\it Cauchy postulate} states that the density flux $f$ through a
surface depends on the surface solely through the normal at that
point. For instance, if $f(y,\nnu)$ represents the heat flow, then
the Stress theorem states that there exists a vector field $\FF$
such that
$$
f(y,\nnu)=\FF(y) \cdot \nnu.
$$

\medskip
Since the time of Cauchy's stress result, \cite{Cauchy1,Cauchy2},
many efforts have been made to generalize his ideas and remove some
of his hypotheses. The first results in this direction were obtained
by Noll \cite{Noll} in 1959, who set up a basis for an axiomatic
foundation for continuum thermodynamics. In particular, Noll
\cite{Noll} showed that the Cauchy postulate may directly follow
from the balance law. In \cite{gm}, Gurtin-Martins introduced the
concept of Cauchy flux and removed the continuity assumption on $f$.
In \cite{Ziemer1}, Ziemer proved Noll's theorem in the context of
geometric measure theory, in which the Cauchy fluxes were first
formulated at the level of generality with sets of finite perimeter
in the absence of jump surfaces, ``shock waves".

However, as we explain below, all the previous formulations of
(\ref{balance}) do not allow the presence of ``shock waves''; one of
our main intentions in this paper is to develop a theory that will
allow the presence of ``shock waves''.

In this section we first introduce a class of Cauchy fluxes that
allows the presence of the exceptional surfaces or ``shock waves'' and
then prove that such a Cauchy flux induces a bounded
divergence-measure (vector) field $\FF$ so that the Cauchy flux over
{\it every} oriented surface
can be recovered
through $\FF$ and the normal to the oriented surface.
Before introducing this framework, we need the following
definitions.

\begin{definition} An oriented surface in $\Omega$ is a pair $(S,\nnu)$
so that $S\Subset\Omega$ is a Borel set and $\nnu:\R^N \rightarrow
\SB^{N-1}$ is a Borel measurable unit vector field that satisfy the
following property: There is a set $E\Subset\Omega$ of finite
perimeter such that $S \subset \po^*E$ and
\begin{equation}
\nnu(y)=\nnu_{_{E}}(y)\, \chi_{S}(y), \nonumber
\end{equation}
where $\chi_{S}$ is the characteristic function of the set $S$ and
$\nnu_{_{E}}(y)$ is the interior measure-theoretic unit normal to $E$
at $y$.
\end{definition}

Two oriented surfaces $(S_j,\nnu_j), j=1,2$, are said to be
compatible if there exists a set of finite perimeter $E$ such that
$S_j\subset \po^{*}E$ and $\nnu_j(y)=\nnu_{_{E}}(y)\, \chi_{S_j}(y)$,
$j=1,2.$ For simplicity, we will denote the pair $(S,\nnu)$ simply as
$S$, with implicit understanding that $S$ is oriented by the
interior normal of some set $E$ of finite perimeter. We define $-S=
(S,-\nnu)$, which is regarded as a different oriented surface.

\begin{definition}
\label{def4} Let $\Omega$ be a bounded open set. A Cauchy flux is a
functional $\F$ that assigns to each oriented surface
$S:=(S,\nnu)\Subset \Omega$ a real number and has the following
properties:
\begin{enumerate} [\rm (i)]
\item $\F(S_1 \cup S_2) = \F(S_1)+ \F(S_2)$ for any pair of compatible
      disjoint surfaces $S_1, S_2 \Subset\Omega$; \label{uno}

\item There exists a nonnegative Radon measure $\sigma$ in $\Omega$
such that
       $$
      |\F(\po^* E)| \leq \sigma (E)
      $$ \label{tres}
for every set of finite perimeter $E \Subset\Omega$ satisfying
$\sigma (\po E)=0$;

\item There exists a constant $C$ such that
      $$
      |\F(S)| \leq C\, \mathcal{H}^{N-1}(S)
      $$
for  every oriented surface $S \Subset\Omega$ satisfying
$\sigma(S)=0$.
   \label{dos}
\end{enumerate}
\end{definition}

This general framework for Cauchy fluxes allows the presence of
exceptional surfaces, ``shock waves'', in the formulation of the
axioms, on which the measure $\sigma$ has support. On these
exceptional surfaces, the Cauchy flux $\F$ has a discontinuity,
i.e., $\F(S)\ne -\F(-S)$. In fact, the exceptional surfaces are
supported on the singular part of measure $\sigma$. When $\sigma$
reduces to the $N$-dimensional Lebesgue measure $\mathcal{L}^N$, the
formulation reduces to Ziemer's formulation in \cite{Ziemer1} and in
this case $\sigma$ vanishes on any $\H^{N-1}$-dimensional surface,
which excludes shock waves.

The theory developed in this paper allows to approximate the
exceptional surfaces or ``shock waves'' with smooth boundaries and
rigorously pass to the limit to recover the flux across the shock
waves. This allows to capture measure production density in the
formulation of the balance law and entropy dissipation for entropy
solutions of hyperbolic conservation laws. Once we know the flux
across  {\it every surface}, we proceed to obtain a rigorous
derivation of nonlinear systems of balance laws with measure source
terms from the physical principle of balance law in \S 10. The
framework also allows the recovery of Cauchy entropy fluxes through
the Lax entropy inequality for entropy solutions of hyperbolic
conservation laws by capturing entropy dissipation; see \S 11.

The main theorem of this section is the following.

\begin{theorem}\label{thm:9.1}
Let $\F$ be a  Cauchy flux in $\Omega$. Then there exists a unique
divergence-measure field $\FF \in
\mathcal{DM}^{\infty}_{loc}(\Omega)$ such that
\begin{equation}
\F (S) = -\int_S \mathscr{F}_i\cdot \nnu\, d\H^{N-1} \label{ayuda:a}
\end{equation}
for {\bf every} oriented surface $(S,\nnu)\Subset \Omega$, where
$\mathscr{F}_i\cdot\nnu$ is the normal trace of $\FF$ to the
oriented surface.
\end{theorem}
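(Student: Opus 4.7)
\smallskip
\noindent\textbf{Proof proposal.} The plan has four stages: (a) extract a signed Radon measure $\mu$ on $\Omega$ from $\mathcal{F}$ via the additivity axiom, (b) construct the vector field $\mathbf{F}$ componentwise from the fluxes through coordinate hyperplanes, (c) identify $\mu$ with $\operatorname{div}\mathbf{F}$ in the distributional sense so that $\mathbf{F}\in\mathcal{DM}^{\infty}_{\mathrm{loc}}(\Omega)$, and then (d) invoke Theorem \ref{main} to promote the representation from smooth slices to every oriented surface. Uniqueness of $\mathbf{F}$ follows immediately from the Gauss-Green theorem applied to small balls on which $\mu$ charges no boundary, since the formula (\ref{ayuda:a}) determines the normal component of $\mathbf{F}$ in $\mathcal{L}^{N}$-a.e.\ direction at $\mathcal{L}^{N}$-a.e.\ point.

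For stage (a), I would first use axiom (i) to verify that, for any two disjoint sets of finite perimeter $E_{1},E_{2}\Subset\Omega$ with $\sigma(\partial E_{i})=0$, the surface $\partial^{*}(E_{1}\cup E_{2})$ differs from $\partial^{*}E_{1}\cup\partial^{*}E_{2}$ only on a set where the interior normals of $E_{1},E_{2}$ are opposite, so that the contributions cancel and the assignment $\mu(E):=\mathcal{F}(\partial^{*}E)$ is finitely additive on such sets. Axiom (ii) gives $|\mu(E)|\le \sigma(E)$, which by a standard Carathéodory/Hahn extension argument (approximating an arbitrary Borel set from inside by open sets whose boundaries are $\sigma$-null, which is possible away from a countable exceptional set of radii by a Vitali-type choice) produces a signed Radon measure on $\Omega$, still denoted $\mu$, with $\|\mu\|\le\sigma$.

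For stage (b), fix a coordinate direction $e_{i}$ and for each open cube $Q=Q_{r}(x)$ with faces in the coordinate directions, write $\partial^{*}Q$ as the union of its $2N$ oriented faces. Axiom (i) gives
\begin{equation*}
\mu(Q)=\mathcal{F}(\partial^{*}Q)=\sum_{k=1}^{2N}\mathcal{F}(\Sigma_{k}),
\end{equation*}
for all but a countable family of radii $r$. Axiom (iii) shows that each face-flux $\Sigma\mapsto\mathcal{F}(\Sigma)$ is absolutely continuous with respect to $\mathcal{H}^{N-1}$ on planes transverse to $e_{i}$ where $\sigma$ is negligible, with uniform bound $C$. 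Integrating in $r$ and using Fubini, the function
\begin{equation*}
F_{i}(x):=-\lim_{r\to 0^{+}}\frac{\mathcal{F}(\Sigma^{+}_{i,r}(x))}{(2r)^{N-1}},
\end{equation*}
where $\Sigma^{+}_{i,r}(x)$ is the face of $Q_{r}(x)$ oriented by $e_{i}$, exists $\mathcal{L}^{N}$-a.e. by Lebesgue differentiation applied to the set function $\Sigma\mapsto\mathcal{F}(\Sigma)$ on hyperplanes, is bounded by $C$, and is Borel measurable. Setting $\mathbf{F}=(F_{1},\ldots,F_{N})$ yields an $L^{\infty}$ field defined $\mathcal{L}^{N}$-a.e.

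For stage (c), I would compute $\operatorname{div}\mathbf{F}$ in the sense of distributions by testing against $\varphi\in C_{c}^{\infty}(\Omega)$ and using Riemann-type sums: partition $\operatorname{spt}\varphi$ by a fine grid of cubes $Q_{r}^{\alpha}$ with $\sigma$-null boundaries, apply the balance law $\mu(Q_{r}^{\alpha})=\sum_{k}\mathcal{F}(\Sigma_{k}^{\alpha})$ on each, and pass to the limit $r\to 0$. On the left, $\sum_{\alpha}\varphi(x_{\alpha})\mu(Q_{r}^{\alpha})\to\int_{\Omega}\varphi\,d\mu$. On the right, after regrouping neighbouring faces, the difference quotients of $\varphi$ combine with the approximating face-averages of $F_{i}$ to produce a Riemann sum converging to $-\int_{\Omega}\mathbf{F}\cdot\nabla\varphi\,dy$. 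This identifies $\operatorname{div}\mathbf{F}=\mu$, so $\mathbf{F}\in\mathcal{DM}^{\infty}_{\mathrm{loc}}(\Omega)$, with $\|\operatorname{div}\mathbf{F}\|\le\sigma$.

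For stage (d), apply the Gauss-Green theorem (Theorem \ref{above}) to the set of finite perimeter $E$ underlying $(S,\nu)$: $\mu(E^{1})=-\int_{\partial^{*}E}\mathscr{F}_{i}\cdot\nu\,d\mathcal{H}^{N-1}$. Since this equals $\mathcal{F}(\partial^{*}E)$ by the construction of $\mu$, and since both $\mathcal{F}$ and $T\mapsto-\int_{T}\mathscr{F}_{i}\cdot\nu\,d\mathcal{H}^{N-1}$ are additive set functions on compatible sub-surfaces of $\partial^{*}E$ that are absolutely continuous with respect to $\mathcal{H}^{N-1}$ on the $\sigma$-null part (by axiom (iii) and Theorem \ref{main}(I)(d)), a Radon-Nikod\'ym / monotone class argument on $\partial^{*}E$ forces agreement on the arbitrary Borel subset $S$, giving (\ref{ayuda:a}).

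The main obstacle I expect is stage (d), specifically matching the two set functions on \emph{arbitrary} oriented subsurfaces rather than on full reduced boundaries: the presence of ``shock waves'' means $\mathcal{F}(S)\ne-\mathcal{F}(-S)$, so one must exploit the one-sided nature of the interior trace in Theorem \ref{main}(I) and verify that the approximation $A_{k;s}\to E^{1}$ from inside, together with axiom (ii) controlling the discrepancy by $\sigma(A_{k;s}\triangle E^{1})\to 0$, lets the flux through the smooth approximating boundaries converge precisely to $\mathcal{F}(\partial^{*}E)$ rather than to the symmetric average. Stage (c) is also delicate: ensuring that the cube-grid Riemann argument survives the presence of singular support of $\sigma$ on hyperplanes will require a careful choice of translated grids avoiding the countable family of bad hyperplanes where $\sigma$ concentrates.
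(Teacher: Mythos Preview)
Your stages (b)--(c) run parallel to the paper's Lemma~\ref{t3}, though the paper packages them differently and in a more robust order: rather than extracting a measure $\mu$ first, it builds each component $f^{j}$ by applying Fuglede's generalized Riesz theorem (Theorem~\ref{ta1}) to the \emph{integrated} slice flux $I\mapsto\int_{a_{j}}^{b_{j}}\mathcal{F}(I_{\tau_{j}})\,d\tau_{j}$, which is additive on cubes and bounded by $C|I|$ via axiom~(iii); only afterwards does it show $\operatorname{div}\mathbf{F}$ is a measure by a second application of Theorem~\ref{ta1}. This ordering matters because your stage~(a) has a gap: finite additivity of $E\mapsto\mathcal{F}(\partial^{*}E)$ on disjoint $E_{1},E_{2}$ requires the contributions on the shared boundary $\partial^{*}E_{1}\cap\partial^{*}E_{2}$ to cancel, i.e.\ $\mathcal{F}(S)+\mathcal{F}(-S)=0$ there. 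But antisymmetry is \emph{not} an axiom---it fails precisely on shock surfaces---and only becomes available \emph{after} one has the representation $\mathcal{F}(S)=-\int_{S}\mathbf{F}\cdot\nnu\,d\mathcal{H}^{N-1}$ on $\sigma$-null surfaces. The paper's slice-first approach needs no such cancellation.

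Stage~(d) has a more basic gap. You try to match $S\mapsto\mathcal{F}(S)$ and $S\mapsto -\int_{S}\mathscr{F}_{i}\cdot\nnu\,d\mathcal{H}^{N-1}$ on all Borel $S\subset\partial^{*}E$ via Radon--Nikod\'ym, but the axioms do not determine $\mathcal{F}(S)$ when $\sigma(S)>0$: axiom~(ii) requires $\sigma(\partial E)=0$, axiom~(iii) requires $\sigma(S)=0$, and additivity with $\sigma$-null pieces cannot pin down the value on the singular support of $\sigma$. Your proposed fix via $\sigma(A_{k;s}\triangle E^{1})\to 0$ does not work either, since the approximation theorem of \S4 needs the controlling measure to be absolutely continuous with respect to $\mathcal{H}^{N-1}$, which holds for $\|\operatorname{div}\mathbf{F}\|$ but is not assumed for $\sigma$. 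The paper's resolution is: first prove the representation on all $\sigma$-null $S$ (Lemma~\ref{t1}) by trapping $S$ inside nested finite unions of cubes $J_{i}$ and using axiom~(ii) on $J_{i}\cap E$ together with the Gauss--Green theorem; then, on exceptional surfaces where $\sigma(S)>0$, \emph{declare} $\mathcal{F}(S):=-\int_{S}\mathscr{F}_{i}\cdot\nnu\,d\mathcal{H}^{N-1}$ as the recovered value, justified by Theorem~\ref{main}, which exhibits this trace as the limit of classical fluxes through smooth approximants from the measure-theoretic interior. The theorem is therefore as much about \emph{extending} the Cauchy flux consistently across shocks as about representing a fully prescribed one.
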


When $\sigma$ reduces to the $N$-dimensional Lebesgue measure
$\mathcal{L}^N$, as in Ziemer's formulation, the vector field $\FF$
satisfies $\div \FF \in L^\infty$ and $\F(S)=-\F(-S)$ for every
surface $S$, which thus excludes shock waves where the Cauchy flux
$\F$ has a discontinuity, i.e., $\F(S)\ne -\F(-S)$.

In order to establish Theorem \ref{thm:9.1}, we need  Lemmas
\ref{t3}--\ref{t1} that were first shown in
Degiovanni-Marzocci-Musesti \cite{Italians}. Here we offer
simplified proofs of these facts for completeness. In particular,
Lemma \ref{t3} is in fact a direct application of Theorem \ref{ta1}
(due to Fuglede) below, and Lemma \ref{t1} follows by an
approximation and Theorem \ref{main}. We also refer to Schuricht
\cite{German} for a different approach in formulating the axioms in
Definition \ref{def4}.

\medskip
The following theorem, due to Fuglede, is a generalization of
Riesz's theorem, whose proof can be found in \cite{Fuglede1}.

\begin{theorem}\label{ta1} Let $\mu$ be a nonnegative measure defined on a
$\sigma$-field $\mathcal{V}$ of subsets of a fixed set $X$ and $X
\in \mathcal{V}$. Let $\varphi$ be an additive set function defined
on a system of sets $\mathcal{U} \subset \mathcal{V}$ such that all
finite unions of disjoint sets from $\mathcal{U}$, together with the
empty set, form a field $\mathcal{F}$ which generates $\mathcal{V}$.
Assume that $\mu(A) < \infty$ for every $A\subset\mathcal{U}$. Then
there exists a function $g(y) \in L^1(X,\mathcal{V},\mu)$ with the
property that
$$
\varphi(A)=\int_A g(y)\, d\mu \quad \textnormal{ for every } A \in
\mathcal{U}
$$
if and only if the following hold:
\begin{enumerate}[\rm (i)]
\item For every $\ve>0$, there exists $\delta >0$ such that
$\sum_{i=1}^n |\varphi(A_i)|\leq\ve$ for every finite system of
disjoint sets $A_1, A_2,..., A_n$ from $\mathcal{U}$ for which
$\sum_{i=1}^n \mu(A_i) < \delta$; \label{primero}
\item There is a finite constant $C$ such that
 $\sum_{i=1}^n |\varphi(A_i)| \leq C$ for every finite system
of disjoint sets $A_1, A_2,..., A_n$ from $\mathcal{U}$.
 \label{segundo}
\end{enumerate}
The function $g$ is then essentially uniquely determined. Under the
additional assumption that $\mu(X)<\infty$, condition {\rm (ii)} is
a consequence of condition {\rm (i)}.
\end{theorem}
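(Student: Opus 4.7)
The plan is to prove necessity directly from absolute continuity of the Lebesgue integral, and to establish sufficiency by extending $\varphi$ to a finite signed measure on $\mathcal{V}$ and then invoking the classical Radon-Nikodym theorem. For necessity, suppose $\varphi(A)=\int_A g\,d\mu$ on $\mathcal{U}$ for some $g\in L^1(X,\mathcal{V},\mu)$. Then for any finite disjoint collection $\{A_i\}\subset\mathcal{U}$, one has $\sum_i|\varphi(A_i)|\le \sum_i\int_{A_i}|g|\,d\mu\le \int_X|g|\,d\mu$, giving (ii). Condition (i) is the classical absolute continuity of the integral applied to $|g|$: given $\ve>0$, choose $\delta>0$ so that $\int_E|g|\,d\mu<\ve$ whenever $\mu(E)<\delta$, and apply this with $E=\bigcup_i A_i\in\mathcal{V}$ of measure $\sum_i\mu(A_i)<\delta$.

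For sufficiency, I would first extend $\varphi$ to the generating field $\mathcal{F}$ by setting $\tilde\varphi(B):=\sum_{i=1}^n\varphi(A_i)$ whenever $B=\bigcup_{i=1}^n A_i$ is a disjoint union with $A_i\in\mathcal{U}$. Well-definedness (independence of the representation) and finite additivity on $\mathcal{F}$ follow from the additivity of $\varphi$ on $\mathcal{U}$ by a common-refinement argument in $\mathcal{F}$ (two disjoint representations of $B$ share a mutual refinement obtained by pairwise intersection inside $\mathcal{F}$). Next I would verify $\sigma$-additivity of $\tilde\varphi$ on $\mathcal{F}$: given disjoint $B_n\in\mathcal{F}$ with $B=\bigcup_n B_n\in\mathcal{F}$, finiteness $\mu(B)=\sum_n\mu(B_n)<\infty$ (since $B$ is itself a finite disjoint union from $\mathcal{U}$, each term of finite $\mu$-measure) forces $\mu(C_N)\to 0$ for $C_N:=B\setminus\bigcup_{n\le N}B_n\in\mathcal{F}$. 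Decomposing each $C_N$ into disjoint pieces from $\mathcal{U}$ and invoking (i) yields $\tilde\varphi(C_N)\to 0$, whence $\tilde\varphi(B)=\sum_n\tilde\varphi(B_n)$. Condition (ii) then bounds the total variation of $\tilde\varphi$ on $\mathcal{F}$ uniformly, because any $\mathcal{F}$-partition refines into a disjoint finite collection from $\mathcal{U}$. A Jordan decomposition on $\mathcal{F}$ followed by the Caratheodory-Hahn extension theorem produces a finite signed measure $\bar\varphi$ on $\mathcal{V}$ coinciding with $\varphi$ on $\mathcal{U}$.

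Absolute continuity $\bar\varphi\ll\mu$ on $\mathcal{F}$ is immediate from (i): if $B\in\mathcal{F}$ with $\mu(B)<\delta$, decomposing $B=\bigcup_i A_i$ gives $|\tilde\varphi(B)|\le\sum_i|\varphi(A_i)|\le\ve$, and this absolute continuity propagates to $\mathcal{V}$ by outer regularity of the extension. The Radon-Nikodym theorem then produces $g\in L^1(X,\mathcal{V},\mu)$ with $\bar\varphi(V)=\int_V g\,d\mu$ for every $V\in\mathcal{V}$, hence $\varphi(A)=\int_A g\,d\mu$ on $\mathcal{U}$. Uniqueness of $g$ modulo $\mu$-null sets follows because any $h\in L^1(\mu)$ with $\int_A h\,d\mu=0$ for all $A\in\mathcal{U}$ has vanishing integral on $\mathcal{F}$ by additivity and on $\mathcal{V}$ by the extension, forcing $h=0$ $\mu$-a.e. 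For the final addendum, if $\mu(X)<\infty$ and only (i) is assumed, the extension of $\tilde\varphi$ and the verification of $\sigma$-additivity above use only (i); the resulting real-valued $\sigma$-additive signed measure $\bar\varphi$ on $\mathcal{V}$ is automatically of bounded total variation by the Jordan decomposition, and (ii) can then be read off from this global bound applied to any finite disjoint collection from $\mathcal{U}$. The principal obstacle is the verification that $\tilde\varphi$ is well-defined and $\sigma$-additive on $\mathcal{F}$, since all subsequent steps are standard applications of Caratheodory-Hahn extension and Radon-Nikodym.
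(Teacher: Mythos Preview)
The paper does not actually prove this theorem; it quotes the result, attributes it to Fuglede, and refers to \cite{Fuglede1} for the proof. So there is no in-paper argument to compare against.

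Your outline is the standard route and, as far as one can tell from Fuglede's original, essentially his: extend $\varphi$ additively to the field $\mathcal{F}$, upgrade to countable additivity on $\mathcal{F}$ using condition (i) and the finiteness $\mu(B)<\infty$ for $B\in\mathcal{F}$, extend to a finite signed measure on $\mathcal{V}$ by Carath\'eodory--Hahn, verify absolute continuity with respect to $\mu$, and apply Radon--Nikodym. The necessity direction and the uniqueness argument are unproblematic.

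One point deserves more care than you give it. The Radon--Nikodym theorem in its standard form requires $\mu$ to be $\sigma$-finite, and the hypotheses do not assert this explicitly. If ``field'' is read in the usual sense (closed under complementation in $X$), then $X\in\mathcal{F}$ is a finite disjoint union of sets from $\mathcal{U}$ and hence $\mu(X)<\infty$, which settles the matter---but then the final clause of the theorem (``under the additional assumption that $\mu(X)<\infty$'') would be vacuous, suggesting Fuglede may intend $\mathcal{F}$ to be merely a ring. In that case you need a localization step before invoking Radon--Nikodym: since $\bar\varphi$ has finite total variation and is carried, via (i), by countably many sets from $\mathcal{U}$ of finite $\mu$-measure, one can restrict to a $\sigma$-finite piece of $X$ on which $|\bar\varphi|$ is concentrated and set $g=0$ elsewhere. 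With that caveat addressed, your plan is sound.
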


Let $\{I\}$ be the collection of all closed cubes in $\R^{N}$ of the
form
$$
I=[a_1,b_1]\times [a_2,b_2] \times ... [a_N,b_N],
$$
where $a_1,b_1,a_2,b_2,...,a_N$, and $b_N$ are real numbers. For
almost every $\tau_j\in [a_j, b_j]$, we define
$$
I_{\tau_j} = \{y \in I: y_j=\tau_j \}.
$$
We define the vectors $e_1,e_2,...,e_N$ so that the $j$-th component
of $e_j$ is $-1$ and the other components are zero. We orient the
surface $I_{\tau_j}$ with the vector $e_j$.

\begin{lemma}\label{t3}
Let $\F$ be a Cauchy flux in $\Omega$. Then there exists a
divergence-measure field $\FF \in
\mathcal{DM}^{\infty}_{loc}(\Omega)$ such that, for every  cube
$I=[a_{1},b_{1}] \times [a_{2},b_{2}] \times ... \times
[a_{N},b_{N}]\Subset\Omega$ and almost every $\tau_j\in [a_{j},
b_{j}]$,
$$
\F(I_{\tau_j})=-\int_{I_{\tau_j}} \FF(y)\cdot e_j \, d
\mathcal{H}^{N-1}(y).
$$
\end{lemma}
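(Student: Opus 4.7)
The plan is to construct each component $F_j$ of $\FF$ separately by applying Fuglede's theorem (Theorem~\ref{ta1}) to a set function built from the Cauchy flux via integration over parallel slices.

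Fix a cube $I \Subset \Omega$ and a direction $j$. Since the slices $\{I_{\tau_j}\}$ are mutually disjoint and $\sigma$ is a finite Radon measure on a neighborhood of $I$, we have $\sigma(I_{\tau_j}) = 0$ for all but countably many $\tau_j$, so axiom~(iii) of Definition~\ref{def4} gives $|\F(I_{\tau_j})| \le C\,\H^{N-1}(I_{\tau_j})$ almost everywhere. After checking measurability of $\tau_j \mapsto \F(I_{\tau_j})$ by approximation with finite unions of sub-rectangles, I define, for each sub-rectangle $R \subset I$,
\[
\Phi_j(R) := \int_{a_j}^{b_j} \F(R_{\tau_j})\, d\tau_j,
\]
so that $|\Phi_j(R)| \le C|R|$ by Fubini. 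The crucial additivity observation is: if $R = R_1 \sqcup R_2$ is a disjoint union of sub-rectangles, then the slices $(R_1)_{\tau_j}$ and $(R_2)_{\tau_j}$ both sit inside $\po^*\{y \in B : y_j < \tau_j\}$ for any box $B \Subset \Omega$ containing $I$, with the common orientation $e_j$; hence they are compatible, and axiom~(i) yields finite additivity of $\Phi_j$ on disjoint sub-rectangles.

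The hypotheses of Theorem~\ref{ta1} are then verified with $X=I$, $\mu=\mathcal{L}^N|_I$, and $\mathcal{U}$ the class of sub-rectangles, so Fuglede's theorem produces $F_j \in L^1(I)$ with $\Phi_j(R) = \int_R F_j\,dy$. Lebesgue differentiation of this identity upgrades $F_j$ to $L^\infty$ with $\|F_j\|_\infty \le C$, and applying Fubini to $\Phi_j(I_{[c,d]})$ and differentiating in the endpoints gives $\F(I_{\tau_j}) = \int_{I_{\tau_j}} F_j\, d\H^{N-1}$ for a.e.\ $\tau_j$. Since $e_j$ has $-1$ in the $j$-th slot, this rewrites as $\F(I_{\tau_j}) = -\int_{I_{\tau_j}} \FF \cdot e_j\, d\H^{N-1}$, as required. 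Globalization to $\Omega$ is achieved by exhausting $\Omega$ with an increasing sequence of cubes, with the uniqueness clause in Fuglede's theorem guaranteeing consistency on overlaps and producing $F_j \in L^\infty_{loc}(\Omega)$.

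To secure the $\DM^\infty_{loc}(\Omega)$ conclusion, I apply axiom~(ii) to open sub-rectangles $R$ with $\sigma(\po R)=0$: expanding $\F(\po^* R)$ as the sum of fluxes through the $2N$ faces (each face a slice, with the orientations $\pm e_j$ handled by running the construction a second time for $-e_j$ and invoking Fuglede's uniqueness to match the two fields) and using $|\F(\po^* R)| \le \sigma(R)$ produces a $\sigma$-bounded finitely additive set function on rectangles that, by standard distributional arguments, coincides with $\div \FF$ and extends to a signed Radon measure of locally finite total variation. The main obstacle is the compatibility step for additivity of $\Phi_j$: one must exhibit an explicit set of finite perimeter (the truncated half-space $\{y_j < \tau_j\} \cap B$) whose reduced boundary simultaneously contains all the relevant slices with the common orientation $e_j$, so that axiom~(i) legitimately applies; once this is in place, the remainder is a routine combination of Fuglede's theorem, Fubini, and Lebesgue differentiation.
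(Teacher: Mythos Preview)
Your proposal is correct and follows essentially the same route as the paper: define the set function $\Phi_j(I)=\int \F(I_{\tau_j})\,d\tau_j$, apply Fuglede's Theorem~\ref{ta1} with respect to Lebesgue measure to produce the bounded components $F_j$, differentiate via Fubini to recover the slice formula, and then use axiom~(ii) on cube boundaries together with a second application of Theorem~\ref{ta1} (now with respect to $\sigma$) to conclude that $\div\FF$ is a Radon measure. The only place where the paper is more explicit than your sketch is the final identification $\div\FF=g\,d\sigma$ in the sense of distributions, which the paper carries out by a mollification argument showing $\div\FF_\varepsilon=\tilde\sigma_\varepsilon$ pointwise; your phrase ``standard distributional arguments'' hides exactly this computation.
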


\begin{proof}
{\it Step 1.} We fix $j \in \{1,...,N \}$. For every cube $I \subset
\Omega$, we define
$$
\mu^j (I)=\int_{a_{j}}^{b_{j}} \F(I_{\tau_j})\, d \tau_j.
$$
We have
\begin{equation}
\label{reason} |\mu^j(I)| \leq  \int_{a_{j}}^{b_{j}}
|\F(I_{\tau_j})|\, d\tau_j \leq C \int_{a_{j}}^{b_{j}}
\int_{I_{\tau_j}}  d\H^{N-1} d\tau_j = C |I|.
\end{equation}
Thus, from Theorem \ref{ta1}, there exists a function $f^j \in
L^{1}(\Omega)$ such that
$$
 \mu^j(I) = \int_I f^j\, dy \qquad \tn{  for every }I.
$$
In fact, inequality (\ref{reason}) implies that $f^j \in
L^{\infty}(\Omega)$ since, for $\LL^N$--a.e. $y$,
$$
 f^j(y)=\lim_{|I| \to 0, y \in I} \frac{\int_I f^j \, dx}{|I|} \leq C.
$$

Fubini's theorem implies that
\begin{equation}
\label{que}
 \mu^j(I)=\int_{a_{j}}^{b_{j}} \F(I_{\tau_j})\, d\tau_j
= \int_{I} f^{j}\, dy= \int_{a_{j}}^{b_{j}} \int_{I_{\tau_j}} f^j
d\H^{N-1} d\tau_j .
\end{equation}
Let $\tau_j\in [a_{j}, b_{j}]$, $\alpha_{k,j}$, and $\beta_{k,j}$ be
sequences such that
$$
\alpha_{k,j} \leq \tau_{j} \leq \beta_{k,j},
$$
where $\alpha_{k,j}$ is an increasing sequence that converges to
$\tau_{j}$ as
$k \to \infty$, and $\beta_{k,j}$ is a decreasing  sequence that
converges to $\tau_{j}$ as $k \to \infty$. Thus, from (\ref{que}),
we obtain
\begin{equation}
\frac{1}{\alpha_{k,j}-\beta_{k,j}}\int_{a_{j}}^{b_{j}}\F(I_{\tau_j})\,
d\tau_j = \frac{1}{\alpha_{k,j} -\beta_{k,j}}\int_{a_{j}}^{b_{j}}
\int_{I_{\tau_j}} f^{j}\, d\H^{N-1}d\tau_j.
\end{equation}
We let $k \to \infty$ to obtain that, for a.e. $\tau_j$,
\begin{equation}
\F(I_{\tau_{j}}) = \int_{I_{\tau_{j}}} f^{j}\, d\H^{N-1}.
\end{equation}
Define
$$
\FF:=(f^1,f^2,...,f^N).
$$
Then we find that, for almost every $\tau_j$, $j \in \{1,2,...,N\}$,
\begin{equation}
\F(I_{\tau_{j}}) = -\int_{I_{\tau_{j}}} \FF(y) \cdot e_j\,
d\H^{N-1}(y).
\end{equation}

{\it Step 2.} We now prove that the divergence of $\FF$, in the
sense of distributions, is a measure. We define, for a.e. cube $I$,
\begin{equation}
\eta(I):=-\int_{\po I}\FF(y)\cdot \nnu(y)\, d\H^{N-1}(y),
\label{lamedida}
\end{equation}
where $\nnu$ is the interior unit normal to $\po I$.

{}From Step 1 and the definition of Cauchy flux, we have
\begin{equation}
\abs{\eta(I)}=\abs{\F(\partial I)} \leq \sigma (I)  \label{matadas}
\end{equation}
for almost all closed cubes. Thus, we can again apply Theorem
\ref{ta1} to conclude that there exists a function $g \in
L^1(\Omega; \sigma)$, uniquely defined in $\Omega$ up to a set of
$\sigma$-measure zero, such that
\begin{equation}
\eta(I)=-\int_{\po I}\FF(y)\cdot \nnu(y)\,
d\H^{N-1}(y)=\int_{I}g(y)\,d \sigma \label{matadas2}
\end{equation}
for almost every closed cube $I \subset\Omega$.

Denote $\tilde{\sigma}$ the measure given by $gd\sigma$ in $\Omega$.
We now prove
\begin{equation}
\textnormal{div $\FF = \tilde{\sigma}$ }
\end{equation}
in the sense of distributions in any open set $U\Subset\Omega$.

Let $I \Subset U$ be any closed cube. Then, for any $\phi \in C^1$
with support contained in $I$,
\begin{equation}
\int_{U} \FF \cdot \nabla \phi\, dy =\lim_{\ve\to 0}\int_{U}
\FF_\ve\cdot\nabla\phi\, dy =-\lim_{\ve\to
0}\int_{U}\phi\,\textnormal{div} \FF_\ve\, dy, \label{casi}
\end{equation}
where $\FF_{\ve}=\FF*\rho_{\ve}$ and $\rho$ is the standard
mollifying kernel. We now prove that, for $\LL^N$-a.e. $y \in U$,
$$
\tilde{\sigma}_\ve(y)=\textnormal{div} \FF_\ve (y),
$$
where $\tilde{\sigma}_{\ve}$ is the convolution of function
$\rho_{\ve}$ with the measure $\tilde{\sigma}$; that is,
\begin{equation}
\tilde{\sigma}_{\ve}(y):=(\rho_{\ve}*\tilde{\sigma})(y)
=\int_{\Omega} \rho_{\ve}(y-x)\, d\tilde{\sigma}(x) \label{fcm}.
\end{equation}

{}From (\ref{matadas})--(\ref{matadas2}), we find that, for $\ve <
dist (\po U, \po\Omega)$,
\begin{eqnarray*}
\int_{I}\div \FF_\ve(y)\,dy &= & -\int_{\po I}\FF_\ve(y)\cdot
\nnu(y)\, dy
=  -\int_{\po I}\int_{\R^{N}}\FF(y-x)\cdot\nnu(y)\rho_\ve(x)\, dxdy\\
&= & -\int_{\R^{N}}\int_{\po I}\FF(y-x)\cdot \nnu(y) \rho_\ve(x)\, dydx\\
&= & -\int_{\R^{N}} \Big(\int_{\po I_x}\FF(y)\cdot \nnu(y)dy \Big)
\rho_\ve(x)\, dx\\
&= & \int_{\R^{N}} \tilde{\sigma}(I_x) \rho_\ve(x)\, dx,
\end{eqnarray*}
where $I_x=\{y\, :\, a_i\le y_i-x_i\le b_i, \, i=1, \cdots, N\}$. We
can consider the smooth function $\rho_{\ve}$ as a measure in
$\R^{N}$, say $\lambda_{\ve}$, by defining $\lambda_{\ve}(A)
=\int_{A} \rho_{\ve}(x)\, dx$ for any Borel set $A$. We can also
extend the measure $\tilde{\sigma}$ by zero outside $\Omega$.
Therefore, we find
\begin{equation}
\int_{\R^{N}} \tilde{\sigma}(I_x) \rho_\ve(x)\, dx =
(\tilde{\sigma}* \lambda_{\ve})(I) = (\lambda_{\ve} *
\tilde{\sigma})(I) =\int_{\R^{N}} \lambda_{\ve}(I_x)\,
d\tilde{\sigma}(x). \label{riguroso}
\end{equation}
{}From (\ref{riguroso}) and using (\ref{fcm}), we compute
\begin{eqnarray*}
\int_{\R^{N}} \lambda_{\ve}(I_x)\, d\tilde{\sigma}(x)
=\int_{\Omega}\lambda_{\ve}(I_x)\, d\tilde{\sigma}(x)
&=&\int_{\Omega}\Big(\int_{I_x}\rho_{\ve}(y)dy\Big)d\tilde{\sigma}(x)
=\int_{\Omega}\int_{I}\rho_{\ve}(y-x)\, dy d\tilde{\sigma}(x) \\
&=& \int_{I}\Big(\int_{\Omega} \rho_{\ve}(y-x)\, d\tilde{\sigma}(x)
\Big) dy
= \int_{I}(\rho_{\ve}*\tilde{\sigma})(y)\, dy \\
&= & \int_{I} \tilde{\sigma}_{\ve}(y)\, dy.
\end{eqnarray*}
Therefore,
$$
\int_{I}\textnormal{\div}\FF_\ve(y)\,dy =\int_{I}
\tilde{\sigma}_{\ve}(y)\, dy.
$$
Since the cube $I\Subset U$ is arbitrary, this shows that
$\tilde{\sigma}_\ve (y)= \tn{\div }\FF_\ve (y)$ for $\LL^N$-a.e. $y \in
U$. Using this in (\ref{casi}), we obtain
\begin{equation}
\int_{U}\FF\cdot \nabla \phi\, dy =-\lim_{\ve\to 0}
\int_{U}\phi\,\textnormal{\div}\FF_\ve\, dy =-\lim_{\ve \to 0}
\int_{U}\phi\,\tilde{\sigma}_\ve\, dy = -\int_{U}\phi(y)\, d
\tilde{\sigma}(y),
\end{equation}
since the sequence of measures $\tilde{\sigma}_\ve$ converges
locally weak* in $\Omega$ to $\tilde{\sigma}$ as $\ve \to 0$.
\end{proof}

\begin{lemma}
\label{t1} Let $\F$ be a  Cauchy flux in $\Omega$. Then there exists
a unique divergence-measure field $\FF \in
\mathcal{DM}^{\infty}_{loc}(\Omega)$ such that
\begin{equation}
\F (S) = -\int_S \mathscr{F}_i \cdot \nnu\, d\H^{N-1} \label{ayuda}
\end{equation}
for almost every oriented surface $(S,\nnu)\Subset\Omega$; that is,
every surface $S$ in $\Omega$ satisfying the condition that $\sigma
(S)=0$.
\end{lemma}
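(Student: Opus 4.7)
The plan is to leverage Lemma \ref{t3} together with the approximation results of Section~4 and the Gauss--Green theorem (Theorem \ref{generalGG}) to upgrade the cube-face identity to arbitrary oriented surfaces. Take the candidate field $\FF \in \mathcal{DM}^\infty_{\loc}(\Omega)$ supplied by Lemma \ref{t3}. Uniqueness of $\FF$ is immediate: if $\FF'$ also satisfied \eqref{ayuda}, then the identity on cube faces $I_{\tau_j}$ together with Lebesgue differentiation in the $\tau_j$ variable forces $\FF = \FF'$ almost everywhere.

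For existence, first combine $\F(I_{\tau_j}) = -\int_{I_{\tau_j}} \FF \cdot e_j\, d\H^{N-1}$ with the finite-additivity axiom (i), applied across the $2N$ faces of a generic closed cube $I \Subset \Omega$, to deduce
$$
\F(\partial I) = -\int_{\partial I} \FF(y) \cdot \nnu(y)\, d\H^{N-1}(y)
$$
for almost every $I$, in the sense that each of the $2N$ face parameters avoids the exceptional null set of Lemma \ref{t3} and $\sigma(\partial I) = 0$. By construction this common value equals $\tilde{\sigma}(I) = \mu(I)$ with $\mu = \div\FF$.

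Next, for a set of finite perimeter $E \Subset \Omega$ with $\sigma(\partial E) = 0$, I would approximate $E$ by finite disjoint unions of small closed cubes $\{I_j^k\}_j$ drawn from a Besicovitch-type covering of $E^1$ (as in the proof of Lemma \ref{nolocreo}), arranging that each cube face avoids the exceptional sets from the previous step and that the internal faces have controlled $\H^{N-1}$-measure. Finite additivity gives $\F(\partial(\bigcup_j I_j^k)) = \sum_j \F(\partial I_j^k) = \sum_j \mu(I_j^k) = \mu(\bigcup_j I_j^k)$. The left-hand side converges to $\F(\partial^* E)$ by axiom (ii) applied to the symmetric difference $E \triangle \bigcup_j I_j^k$ (whose boundary has small $\sigma$-measure since $\sigma(\partial E) = 0$), while the right-hand side converges to $\mu(E^1)$. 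Combining with the Gauss--Green formula (Theorem \ref{generalGG}), we obtain
$$
\F(\partial^* E) = -\int_{\partial^* E} (\mathscr{F}_i \cdot \nnu)(y)\, d\H^{N-1}(y)
$$
for almost every set of finite perimeter $E$.

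Finally, to reach an arbitrary oriented surface $(S,\nnu)$ with $\sigma(S)=0$, embedded in some $\partial^* E$, consider the set function $\lambda(T) := \F(T) + \int_T \mathscr{F}_i \cdot \nnu\, d\H^{N-1}$ on Borel subsurfaces $T \subset \partial^* E$ satisfying $\sigma(T) = 0$. It is finitely additive by axiom (i) and absolutely continuous with respect to $\H^{N-1} \rtangle \partial^* E$ by axiom (iii) together with $\mathscr{F}_i \cdot \nnu \in L^\infty$. The previous step gives $\lambda(\partial^* E) = 0$. To propagate this to every subsurface, I would apply the previous step to the auxiliary sets $E \cap B_r(y)$ for balls $B_r(y)$ with $\sigma(\partial B_r(y)) = 0$; the contribution of $\partial^* B_r(y) \cap E^1$ is absorbed by axiom (iii), and a standard differentiation-of-measures argument as $r \to 0$ forces the Radon--Nikodym density of $\lambda$ to vanish $\H^{N-1}$-a.e.\ on $\partial^* E$, giving $\F(S) = -\int_S \mathscr{F}_i \cdot \nnu\, d\H^{N-1}$.

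The main obstacle is the second step: realizing the cube approximation so that the discrepancy between $\partial(\bigcup_j I_j^k)$ and $\partial^* E$ can be simultaneously controlled in $\sigma$-measure (to exploit axiom (ii)) and in $\H^{N-1}$-measure on the non-shared pieces (to exploit axiom (iii)), while keeping all relevant face parameters outside the exceptional sets where Lemma \ref{t3}'s pointwise identity may fail.
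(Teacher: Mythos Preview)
Your proposal has a genuine gap in Step~2, which you yourself flag as the main obstacle but do not resolve. Approximating a set of finite perimeter $E$ from the inside by finite unions of cubes $\bigcup_j I_j^k$ in such a way that $\partial\bigl(\bigcup_j I_j^k\bigr)$ converges to $\partial^* E$ in $\H^{N-1}$-measure is generally impossible: the perimeter of a cube union covering $E^1$ up to small $\sigma$-measure can be arbitrarily large compared to $\H^{N-1}(\partial^* E)$. This is exactly the one-sided approximation obstruction discussed at the start of Section~8. Without this control, you cannot pass from $\F\bigl(\partial\bigl(\bigcup_j I_j^k\bigr)\bigr)$ to $\F(\partial^* E)$ using axioms (ii) and (iii), and your Step~3 never gets off the ground.

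The paper avoids this difficulty entirely by approximating the \emph{surface} $S$ rather than the set $E$. Given $(S,\nnu)$ with $S \subset \partial^* E$ and $\sigma(S)=0$, one writes $S = \bigcap_i J_i$ for a decreasing sequence of finite unions of closed cubes $J_i$ and works with the sets $J_i \cap E$. By Lemma~\ref{tedious}, $\partial^*(J_i \cap E)$ decomposes as $S \cup (\partial^* J_i \cap E) \cup \mathcal{N}_i$ with $\H^{N-1}(\mathcal{N}_i)\to 0$. Since $\sigma(J_i \cap E)\to\sigma(S)=0$, axiom (ii) gives $\F(\partial^*(J_i\cap E))\to 0$; since $\div\FF = g\,d\sigma$ (from the proof of Lemma~\ref{t3}) and hence $\|\div\FF\|(S)=0$, Theorem~\ref{main} gives $\int_{\partial^*(J_i\cap E)}\mathscr{F}_i\cdot\nnu\,d\H^{N-1}\to 0$. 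On the cube-face piece $\partial^* J_i \cap E$, Lemma~\ref{t3} applies directly. Subtracting, one recovers $\F(S) = -\int_S \mathscr{F}_i\cdot\nnu\,d\H^{N-1}$ in one stroke, with no need for your separate Steps~2 and~3.
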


\begin{proof}
Using Lemma \ref{t3}, it follows that there exists an
$\FF\in\DM^\infty_{loc}(\Omega)$ such that, for any cube
$I=[a_1,b_1]\times [a_2,b_2] \times ... \times
[a_N,b_N]\Subset\Omega$,
\begin{equation}
\label{base}
 \F (I_{\tau_j}) = -\int_{I_{\tau_j}} \FF(y) \cdot e_j\,
d\H^{N-1}(y)
\end{equation}
for almost every $\tau_j\in [a_j,b_j]$.

Let $(S,\nnu)$ be an oriented surface satisfying $\sigma(S)=0$.
Then, since $\div \FF=g \, d\sigma$ and proceeding as in Lemma
\ref{consistency}, we conclude that $\|\div \FF\|(S)=0$. We
approximate $S$ with closed cubes such that
\begin{equation}
  S = \bigcap_{i=1}^{\infty} J_{i}, \label{inter}
\end{equation}
where each $J_i$ is a finite union of closed cubes and $J_{i+1}
\subset J_i$. Since $S$ is an oriented surface, there exists a set
of finite perimeter, $E$, such that $S \subset \po^*E$. Using Lemma
\ref{tedious}, we have
\begin{equation}\label{descomponer}
\po^* (J_i \cap E) = S \cup (\po^* J_i \cap E) \cup \mathcal{N}_i,
\end{equation}
where $\lim\limits_{i \to \infty} \H^{N-1}(\mathcal{N}_i)=0$ and
thus, since
 $\F(\mathcal{N}_i)\leq C\, \H^{N-1}(\mathcal{N}_i)$, we obtain
\begin{equation}\label{9.20}
    \lim_{i \to \infty} \F(\mathcal{N}_i)=0.
\end{equation}

The definition of Cauchy flux implies that
\begin{equation*}
|\F(\po^* (J_i \cap E))| \leq \sigma (J_{i} \cap E),
\end{equation*}
and thus (\ref{inter}) implies that
\begin{equation}
\label{antes11}
 \lim_{i \to \infty} |\F(\po^* (J_i \cap E))| \leq
\sigma(S)=0.
\end{equation}
On the other hand, using Theorem \ref{main}, we have
\begin{equation}
\Big|-\int_{\po^* (J_i \cap E)} \mathscr{F}_i \cdot \nnu\,
d\H^{N-1}\Big|=\Big|\int_{J_i \cap E}\div \FF\Big|\leq \norm{\div
\FF}(J_i \cap E),
\end{equation}
which yields
\begin{equation}
\label{despues1}
 \lim_{i \to \infty}\Big|-\int_{\po^* (J_i \cap E)}\mathscr{F}_i
\cdot \nnu\, d\H^{N-1}\Big| \leq \norm{\div \FF}(S)=0.
\end{equation}
Using (\ref{descomponer})--\eqref{antes11} and Lemma \ref{t3},
we obtain
\begin{eqnarray}
\label{antes2}
 \lim_{i \to \infty} |\F(\po^* (J_i \cap E))|
 &=&\lim_{i \to \infty} |\F(S)+\F(\po^* J_i \cap E)|\\
 &=&\lim_{i \to \infty} \Big|\F(S)-\int_{\po^* J_i \cap E}\FF(y) \cdot
 \nnu(y)\, d\H^{N-1}\Big|=0. \nonumber
\end{eqnarray}
{}From (\ref{descomponer}), \eqref{9.20}, and (\ref{despues1}), we
obtain
\begin{equation}
\label{despues2}
 \lim_{i \to \infty}\Big|-\int_{\po^* J_i \cap E}\FF(y)
\cdot \nnu(y)\, d\H^{N-1} - \int_{S}\mathscr{F}_i \cdot \nnu \,
d\H^{N-1}\Big| =0.
\end{equation}
Combining (\ref{antes2}) with (\ref{despues2}) yields
\begin{equation*}
    \F(S)=- \int_{S}\mathscr{F}_i \cdot \nnu\, d\H^{N-1}.
\end{equation*}

Assume now that there exists another vector field
$G=(g^1,g^2,...,g^N)$ such that (\ref{ayuda}) holds. Then, for fixed
$j \in \{1,2,...,N\}$, we have
\begin{equation}
\int_I f^j dy  =  \int_{a_{j}}^{b_{j}} \int_{I_{\tau_j}} f^{j}
d\H^{N-1} d\tau_j =  \int_{a_{j}}^{b_{j}} \int_{I_{\tau_j}} g^{j}
d\H^{N-1}\, d\tau_j =  \int_{I} g^j\, dy
\end{equation}
for any cube $I$. This implies that
$$
f^j(y)=g^j(y) \qquad \text{for almost every $y$}.
$$
\end{proof}

With Lemmas \ref{t3}--\ref{t1}, we now prove Theorem \ref{thm:9.1}
to explain how the Cauchy flux can be recovered on the exceptional
surfaces based on the theory established in \S 3--\S 8.

\begin{proof}[Proof of Theorem {\rm \ref{thm:9.1}}]
 Let $(S,\nnu)$
be an oriented surface on which $\sigma(S)\ne 0$, i.e. $\F(-S)\ne -
\F(S)$. By definition of oriented surfaces, there exists a bounded
set of finite perimeter, $E:=E^1\cup
\partial^*E\Subset\Omega$,  such that
\begin{equation}
S \subset \partial^* E\quad \text{ and }\quad \nnu(y)=\nnu_E(y)
\chi_S(y),
\end{equation}
where $\nnu_E(y)$ is the interior normal to $E$ at $y\in S$. Consider
$$
 \tilde{E} = E^0\cup \partial^*E.
$$
Then Theorem \ref{main} implies that there exist the normal traces
$\mathscr{F}_{i}\cdot \nnu$ and $\mathscr{F}_{e}\cdot\nnu$ defined on
$\po^*\tilde{E}=\po^* E$ respectively such that
\begin{eqnarray*}
&&\int_{E^{1}} \div\FF =-\int_{\partial^{*}E}
\mathscr{F}_{i}\cdot \nnu\, d\mathcal{H}^{N-1},\\
&&\int_{E} \div\FF =-\int_{\partial^*\tilde E}
\mathscr{F}_{e}\cdot \nnu\, d\mathcal{H}^{N-1} =-\int_{\partial^*E}
\mathscr{F}_{e}\cdot \nnu\, d\mathcal{H}^{N-1}.
\end{eqnarray*}

Theorem \ref{main} indicates that the traces $\mathscr{F}_{i}\cdot
\nnu$ and $\mathscr{F}_{e}\cdot \nnu$ can be recovered, up to a set of
arbitrary small $\H^{N-1}$-measure, from the neighborhood behavior
of the vector field $\FF$.
This observation allows us to define
\begin{equation}
 \F(S)=\F(S,\nnu):=-\int_S \mathscr{F}_{i}\cdot \nnu\, d\H^{N-1},
\end{equation}
and
\begin{equation}
 \F(-S)=\F(S,-\nnu):=-\int_S \mathscr{F}_{e}\cdot (-\nnu)\, d\H^{N-1}
 =\int_S \mathscr{F}_{e}\cdot \nnu\, d\H^{N-1}.
\end{equation}
In this way, we can recover the Cauchy flux $\F$ through the
corresponding divergence-measure field $\FF$ over all oriented
surfaces, especially including the exceptional surfaces. That is,
the normal traces of $\FF\in\DM^\infty_{loc}(\Omega)$ are the Cauchy
densities over all oriented surfaces. This completes the proof of
Theorem \ref{thm:9.1}.
\end{proof}

\medskip
\section{Mathematical formulation of the balance law
and Derivation of Systems of Balance Laws}\label{sec:balance}

In this section we first present the mathematical formulation of the
physical principle of  balance law (\ref{balance}). Then we apply
the results established in \S 3--\S 9 to give a rigorous derivation
of systems of balance laws with measure source terms. In particular,
we give a derivation of hyperbolic systems of conservation laws
(\ref{6.11}).

A balance law on an open subset $\Omega$ of $\R^N$ postulates that
the {\it production} of a vector-valued ``extensive'' quantity in
any bounded measurable subset $E\Subset \Omega$ with finite
perimeter is balanced by the {\it Cauchy flux} of this quantity
through the measure-theoretic boundary $\partial^{m}E$ of $E$ (see
Dafermos \cite{Da,Da-book}).

Like the Cauchy flux, the production is introduced through a
functional $\PP$, defined on any bounded measurable subset of finite
perimeter, $E\subset \Omega$, taking value in $\R^k$ and satisfying
the conditions:
\begin{eqnarray}
&&\PP(E_1\cup E_2)=\PP(E_1)+\PP(E_2)
  \qquad \text{if}\,\, E_1\cap E_2=\emptyset, \label{6.1} \\
&&|\PP(E)|\le \sigma(E). \label{6.2}
\end{eqnarray}

Then the physical principle of balance law can be mathematically
formulated as
\begin{equation}\label{6.3}
\F(\partial^m E)=\PP(E)
\end{equation}
for any bounded measurable subset of finite perimeter,
$E\subset\Omega$.

Fugele's theorem, Theorem \ref{ta1}, indicates that conditions
\eqref{6.1}--\eqref{6.2} implies that there is a production density
$P\in \M(\Omega; \R^k)$ such that
\begin{equation}\label{6.4}
\PP(E)=\int_{E^1} P(y).
\end{equation}

On the other hand, combining Theorem \ref{main} with the argument
from \S 9, it follows that there exists $\FF\in
\DM^\infty_{loc}(\Omega;\R^{N\times k})$ such that
\begin{equation}\label{6.5}
\F(\partial^m E)=-\int_{\partial^m E}(\mathscr{F}_i\cdot\nnu) \,
d\H^{N-1} =\int_{E^1} {\div}\, \FF(y)
\end{equation}
for any set of finite perimeter, $E\Subset\Omega$.

Then \eqref{6.3}--\eqref{6.5} yields the following system of field
equations
\begin{equation}\label{6.6}
\div\FF(y)= P(y)
\end{equation}
in the sense of measures on $\Omega$.

\medskip
We assume that the state of the medium is described by a state
vector field $u$, taking value in an open subset $U$ of $\R^k$,
which determines both the flux density field $\FF$ and the
production density field $P$ at the point $y\in\Omega$ by the {\it
constitutive equations}:
\begin{equation}\label{6.7}
\FF(y):=\FF(u(y),y), \quad P(y):=P(u(y),y),
\end{equation}
where $\FF(u,y)$ and $P(u,y)$ are given smooth functions defined on
$U\times \Omega$.

Combining \eqref{6.6} with \eqref{6.7} leads to the quasilinear
first-order system of partial differential equations
\begin{equation}\label{6.8}
\div \FF(u(y),y)=P(u(y),y),
\end{equation}
which is called a system of balance laws (cf. \cite{Da}).

If $\PP=0$, the previous derivation yields
\begin{equation}\label{6.9}
\div \FF(u(y),y)=0,
\end{equation}
which is called a {\it system of conservation laws}. When the medium
is homogeneous:
$$
\FF(u, y)=\FF(u),
$$
that is, $\FF$ depends on $y$ only through the state vector, then
system \eqref{6.9} becomes
\begin{equation}\label{6.10}
\div \FF(u(y))=0.
\end{equation}

In particular, when the coordinate system $y$ is described by the
time variable $t$ and the space variable $x=(x_1,\cdots, x_n)$:
$$
y=(t, x_1,\cdots, x_n)=(t,x),   \quad N=n+1,
$$
and the flux density is written as
$$
\FF(u)=(u, f_1(u), \cdots, f_n(u))=(u,f(u)),
$$
then we have the following standard form for the system of
conservation laws:
\begin{equation}\label{6.11}
\partial_t u + \nabla_x\cdot f(u)=0, \qquad x\in \R^n,\,\, u\in\R^k.
\end{equation}

\medskip
\section{Entropy Solutions of Hyperbolic Conservation
Laws}\label{sec:conservation}

We now apply the results established in \S 3 -- \S 9 to the
recovery of Cauchy entropy fluxes through the Lax entropy inequality
for entropy solutions of hyperbolic conservation laws by capturing
entropy dissipation. We focus on system \eqref{6.11} which is
assumed to be hyperbolic.

\begin{definition}\label{Definition4.1}
A function $\eta: \R^k\to \R$ is called an entropy of \eqref{6.11}
if there exists $q:\R^k\to \R^n$ such that
\begin{equation}
\nabla q_j(u)=\nabla \eta(u)\nabla f_j(u), \quad j=1,2,\dots, n.
\label{4.5}
\end{equation}
Then the vector function $q(u)$ is called an entropy flux associated
with the entropy $\eta(u)$, and the pair $(\eta(u),q(u))$ is called
an entropy pair. The entropy pair $(\eta(u), q(u))$ is called a
convex entropy pair on the domain $U\subset\R^k$ if the Hessian
matrix $\nabla^2\eta(u)\ge 0$ for any $u\in U$. The entropy pair
$(\eta(u),q(u))$ is called a strictly convex entropy pair on the
domain $U$ if $\nabla^2\eta(u)>0$ for any $u\in U$.
\end{definition}

Friedrichs-Lax \cite{FL} observed that most of systems of conservation
laws that result from continuum mechanics are endowed with a globally
defined, strictly convex entropy. The available existence theories
show that solutions of \eqref{6.11} are generally in the following
class of entropy solutions.

\begin{definition}\label{Definition2.1}
A vector function $u=u(t,x)\in L^\infty_{loc}(\R_+\times \R^n)$ is
called an entropy solution if $u(t,x)$ satisfies the Lax entropy
inequality:
\begin{equation}\label{entropy-ineq}
\po_t \eta(u(t,x))+ \nabla_x\cdot q(u(t,x)) \le 0
\end{equation}
in the sense of distributions for any convex entropy pair $(\eta,q):
\R^k\to \R\times \R^n$.
\end{definition}

Clearly, an entropy solution is a weak solution by choosing
$\eta(u)=\pm u$ in \eqref{entropy-ineq}.

One of the main issues in conservation laws is to study the behavior
of entropy solutions in this class to explore to the fullest extent
possible all questions relating to large-time behavior, uniqueness,
stability, structure, and traces of entropy solutions, with neither
specific reference to any particular method for constructing the
solutions nor additional regularity assumptions. The Schwartz lemma
infers from \eqref{entropy-ineq} that the distribution
$$
\po_t \eta(u(t,x)) +\nabla_x\cdot q(u(t,x))
$$
is in fact a Radon measure, that is, the field $(\eta(u(t,x)),
q(u(t,x)))$ is a divergence-measure field. Then there exists
$\mu_\eta\in \mathcal{M}(\R_+\times\R^n)$ with $\mu_\eta\le 0$ such
that
\begin{equation} \label{entropy-measure}
{\rm div}_{(t,x)}(\eta(u(t,x)),q(u(t,x)))=\mu_\eta.
\end{equation}

For any $L^\infty$ entropy solution $u$, it is first indicated in
Chen \cite{Chen1} that, if the system is endowed with a strictly
convex entropy, then, for any $C^2$ entropy pair $(\eta, q)$, there
exists $\mu_\eta\in \mathcal{M}(\R_+\times\R^n)$ such that
\begin{equation} \label{entropy-measure-1}
{\rm div}_{(t,x)}(\eta(u(t,x)),q(u(t,x)))=\mu_\eta.
\end{equation}

\medskip
We introduce a functional on any oriented surface $S$:
\begin{equation}\label{entropy-flux-1}
\F_\eta(S)=\int_S(\eta(u),q(u))\cdot\nnu \, d\H^{n},
\end{equation}
where $(\eta(u),q(u))\cdot\nnu$ is the normal trace in the sense of
Theorem \ref{main}, since $(\eta(u), q(u))\in
\DM^\infty_{loc}(\R_+\times\R^n)$. It is easy to check that the
functional $\F_\eta$ defined by \eqref{entropy-flux-1} is a Cauchy
flux in the sense of Definition \ref{def4}.

\begin{definition}[Cauchy Entropy Fluxes]
\label{Definition6.3}
A functional $\F_\eta$ defined by \eqref{entropy-flux-1} is called a
Cauchy entropy flux with respect to the entropy $\eta$.
\end{definition}
In particular, when $\eta$ is convex, then
$$
\F_\eta(S)\ge 0
$$
for any oriented surface $S$. Furthermore,
we can reformulate the balance law of entropy from the recovery of
an entropy production by capturing entropy dissipation.

On the other hand, it is clear that understanding more properties of
divergence-measure fields can advance our understanding of the
behavior of entropy solutions for hyperbolic conservation laws and
other related nonlinear equations by selecting appropriate entropy
pairs. As examples, we refer the reader to
\cite{CF1,CF2,CF-CMP,ChenWang} for the stability of Riemann
solutions, which may contain rarefaction waves, contact
discontinuities, and/or vacuum states, in the class of entropy
solutions of the Euler equations for gas dynamics; to
\cite{CF3,ChenWang} for the decay of periodic entropy solutions for
hyperbolic conservation laws; to \cite{CR,Vas} for the initial and
boundary layer problems for hyperbolic conservation laws; to
\cite{CF1,ChenTorres} for the initial-boundary value problems for
hyperbolic conservation laws; and to \cite{BFK,MPT} for nonlinear
degenerate parabolic-hyperbolic equations.

It is hoped that the theory of divergence-measure fields could be
used to develop techniques in entropy methods, measure-theoretic
analysis, partial differential equations, and related areas.

\bigskip
\noindent {\bf Acknowledgments.} The authors would like to thank
Luis Caffarelli, Constantine Dafermos, Willi J\"{a}ger, Fanghua Lin,
Leon Simon, and David Swanson for stimulating and fruitful
discussions. Gui-Qiang Chen's research was supported in part by the
National Science Foundation under Grants DMS-0505473, DMS-0244473,
and an Alexander von Humboldt Foundation Fellowship. Monica Torres's
research was supported in part by the National Science Foundation
under grant DMS-0540869.

\bigskip
\bigskip

\bibliographystyle{alpha}
\nocite{*}

\end{document}